\documentclass[11pt,reqno]{amsart}
\usepackage{xcolor}
\usepackage[colorlinks=true,allcolors=blue,backref=page]{hyperref}
\usepackage{amsmath, amssymb, amsthm}

\usepackage{mathrsfs}
\usepackage{mathtools}
\usepackage[noabbrev,capitalize,nameinlink]{cleveref}
\usepackage{aliascnt}
\usepackage{fullpage}
\usepackage[noadjust]{cite}
\usepackage{graphics}
\usepackage{pifont}
\usepackage{tikz}
\usepackage{tikz-cd}
\usepackage{bbm}
\usepackage[T1]{fontenc}
\usepackage{etoolbox}

\usetikzlibrary{arrows.meta}

\usepackage{environ}
\usepackage{framed}
\usepackage{url}
\usepackage[linesnumbered,ruled,vlined]{algorithm2e}
\usepackage[noend]{algpseudocode}
\usepackage[labelfont=bf]{caption}
\usepackage[framemethod=tikz]{mdframed}
\usepackage{appendix}
\usepackage{graphicx}
\usepackage[textsize=tiny]{todonotes}
\usepackage{tcolorbox}
\usepackage{enumerate}
\usepackage[shortlabels]{enumitem}
\usepackage{physics}
\allowdisplaybreaks[1]

\apptocmd{\sloppy}{\hbadness 10000\relax}{}{} 

\crefname{equation}{}{}
\crefname{algocf}{Algorithm}{Algorithms}
\crefname{conjecture}{Conjecture}{Conjectures} 
\AtBeginEnvironment{appendices}{\crefalias{section}{appendix}} 

\crefformat{enumi}{#2#1#3}
\crefrangeformat{enumi}{#3#1#4 to~#5#2#6}
\crefmultiformat{enumi}{#2#1#3}%
{ and~#2#1#3}{, #2#1#3}{ and~#2#1#3}

\numberwithin{equation}{section}
\newtheorem{theorem}{Theorem}[section]

\newaliascnt{proposition}{theorem}
\newtheorem{proposition}[proposition]{Proposition}
\aliascntresetthe{proposition}
\crefname{proposition}{Proposition}{Propositions}

\newaliascnt{lemma}{theorem}
\newtheorem{lemma}[lemma]{Lemma}
\aliascntresetthe{lemma}
\crefname{lemma}{Lemma}{Lemmas}

\newaliascnt{corollary}{theorem}
\newtheorem{corollary}[corollary]{Corollary}
\aliascntresetthe{corollary}
\crefname{corollary}{Corollary}{Corollaries}

\crefname{subsubsection}{Step}{Steps}

\crefname{claim}{Claim}{Claims}

\newtheorem*{question*}{Question}

\theoremstyle{definition}

\newtheorem*{definition*}{Definition}

\theoremstyle{remark}

\newcommand{\snorm}[1]{\lVert#1\rVert}

\newcommand{\one}{\mathbbm{1}}

\newcommand{\mc}{\mathcal}

\newcommand{\wt}{\widetilde}

\renewcommand{\le}{\leqslant}
\renewcommand{\ge}{\geqslant}

\newcommand\Z{\mathbb{Z}}

\newcommand\R{\mathbb{R}}

\newcommand\PP{\mathbb{P}}
\newcommand\E{\mathbb{E}}

\usepackage{bm}

\newcommand{\Id}{\mathrm{Id}}

\title{On the maxima of Littlewood polynomials on $[-1,1]$}

\author[Letwin]{Brayden Letwin}
\address{Department of Mathematics, University of Washington, Seattle, WA 98195}
\email{letwin@uw.edu}

\author[Sawhney]{Mehtaab Sawhney}
\address{Department of Mathematics, Columbia University, New York, NY 10027 and OpenAI}
\email{\{m.sawhney@columbia.edu, msawhney@openai.com\}}

\begin{document}

\begin{abstract}
A Littlewood polynomial is a polynomial of the form
\[
    f_n(x)=\sum_{k=0}^n \varepsilon_k x^k
\]
with $\varepsilon_k\in\{-1, 1\}$. Let $(\varepsilon_k)_{k \ge 0}$ be i.i.d. Rademacher coefficients. We show that the lower envelope of $\max_{x\in[-1,1]}|f_n(x)|$ is determined by the small-ball probability of a certain Gaussian process. In particular, almost surely,
\[
\liminf_{n\to\infty} \frac{\log(\max_{x\in[-1,1]}|f_n(x)|/\sqrt n)}{(\log\log n)^{1/3}}
    = -\Big(\frac{3\pi^2}{4}\Big)^{1/3}.
\]
\end{abstract}

\maketitle

\section{Introduction} \label{sec:1}

\subsection{Introduction to Littlewood polynomials} \label{subsec:1.1}
A Littlewood polynomial of degree $n$ is a polynomial of the form
\[
    f_n(x)=\sum_{k=0}^n \varepsilon_k x^k
\]
with $\varepsilon_k\in\{-1,1\}$. We study random Littlewood polynomials with i.i.d.\ Rademacher coefficients on the interval $[-1,1]$.

Throughout the paper we write
\[
    \snorm{f_n}_\infty=\max_{x \in [-1, 1]}|f_n(x)|.
\]
As part of their seminal work on random polynomials, Salem and Zygmund \cite[Theorem~(6.1.1)]{SZ54} proved, as a consequence of the law of the iterated logarithm, that almost surely
\begin{equation} \label{eq:1.1}
\limsup_{n \to \infty} \frac{\snorm{f_n}_\infty}{\sqrt{n\log\log n}} = \sqrt{2}.
\end{equation}

A natural question, raised in \cite{SZ54} and later reiterated by Erd\H{o}s \cite{Erd61}, is to determine the corresponding lower envelope. Our first main result relates this lower envelope to a Gaussian process.
\begin{theorem} \label{thm:1.1}
Let $B$ be a standard Brownian motion and, for $\delta>0$, define
\[
F(\delta)=\PP\bigg(\sup_{t\ge0}\bigg|\int_0^1 e^{-st}\,dB_s\bigg|\le\delta\bigg).
\]
Then $F$ is continuous and strictly increasing on $(0,\infty)$, and hence admits an inverse $F^{-1}\colon(0,1)\to(0,\infty)$. Furthermore, almost surely
\[
\liminf_{n\to\infty} \frac{\snorm{f_n}_\infty}{\sqrt{n}\,F^{-1}(\log^{-1/2}n)} = 1.
\]
\end{theorem}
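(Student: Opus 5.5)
Throughout, write $f_n(x)=\sum_{k\le n}\varepsilon_k x^k$. Only $x\in[1-O(1/n),1]$ (and symmetrically near $-1$) can matter at scale $\sqrt n$, so the first step is the scaling limit. Put $x=e^{-t/n}$, so that $f_n(e^{-t/n})=\sum_{k\le n}\varepsilon_k e^{-tk/n}$. If $S_k=\sum_{j<k}\varepsilon_j$, then $S_{\lfloor ns\rfloor}/\sqrt n\to B_s$. Hence $n^{-1/2}f_n(e^{-t/n})\approx\int_0^1 e^{-st}\,dB_s=:X(t)$ uniformly in $t\ge0$, using summation by parts and the exponential damping for large $t$. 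On the negative side, $x=-e^{-t/n}$ gives $\sum_k(-1)^k\varepsilon_k e^{-tk/n}$. This is the same functional of the independent-in-the-limit walk $\sum(-1)^k\varepsilon_k$; the even and odd parts, $S^{e}$ and $S^{o}$, give asymptotically independent Brownian motions $B^{(1)},B^{(2)}$. So $\snorm{f_n}_\infty/\sqrt n\approx\max(\sup_t|X^{(1)}|,\sup_t|X^{(2)}|)$ with $X^{(i)}$ i.i.d. copies of $X$ up to $\sqrt2$ normalizations. The probability that both are $\le\delta$ is then roughly $F(\delta)^2$. That matches the threshold $F^{-1}(\log^{-1/2}n)$, since $F^2=1/\log n$ is exactly the borderline for $\sum_k F(\delta_{n_k})^2$ along geometric $n_k$. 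I will take this as the heuristic normalization, and the first concrete lemma is to pin down this constant bookkeeping.

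Continuity and strict monotonicity of $F$ come from a standard Gaussian-measure argument. $\sup_t|X(t)|$ is a seminorm of a centered Gaussian process, and it is a.s. finite because $X$ is continuous with $X(t)\to0$ as $t\to\infty$. Anderson's inequality together with log-concavity (Ehrhard) gives that $\delta\mapsto F(\delta)$ is log-concave-in-$\Phi^{-1}$. The support of $X$ contains $0$ in its closure, by Cameron–Martin, so $F>0$ everywhere. Ehrhard's inequality also rules out atoms of the norm distribution (Tsirelson), giving continuity and strict increase. Next I need the asymptotics of $F(\delta)$ as $\delta\to0$, at least enough regularity of $\log F$: a statement like $F((1\pm\eta)\delta)/F(\delta)\to$ a factor that makes $F^{-1}(c/\sqrt{\log n})\sim F^{-1}(1/\sqrt{\log n})$. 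Since $-\log F(\delta)\asymp|\log\delta|^3$, a shift of $F$ by constant factors changes $F^{-1}$ by only a $1+o(1)$ factor. That slack absorbs all polynomial-in-$\log n$ losses.

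The two directions then follow the Chung-type LIL scheme. \textbf{Lower bound:} take $n_k=\lfloor\theta^k\rfloor$. Use strong approximation (KMT) to couple $S$ with $B$ with error $O(\log n)$, negligible at scale $\sqrt n\,F^{-1}$ since $F^{-1}(\log^{-1/2}n)\ge e^{-C(\log\log n)^{1/3}}$. Then $\PP(\snorm{f_{n}}_\infty\le(1-\eta)\sqrt nF^{-1}(\log^{-1/2}n))\le(\log n)^{-1-c}$, which is summable along $n_k$. To interpolate between $n_k$ and $n_{k+1}$, I would bound $\max_{x}|f_m-f_{n_k}|$ for $m\in[n_k,n_{k+1}]$. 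This is delicate, since a tail block of size $(\theta-1)n$ is not small. So I will instead control the event directly: monotone-in-$n$ comparison fails, and I would use a finer grid $n_k=\lfloor e^{k/\log k}\rfloor$ or similar, together with the fact that the relevant $t$-scale changes little. \textbf{Upper bound:} along very sparse $n_k=e^{k^{1+\epsilon}}$ or $k^k$, the events that the walk increments on $(n_{k-1},n_k]$ produce a small sup are nearly independent. The earlier part contributes only $\sqrt{n_{k-1}}\ll\sqrt{n_k}F^{-1}$ at $x$ near $\pm1$. Lower-order corrections require care, because small sup demands smallness at all $t$, including $t\sim n_k/n_{k-1}$ scales where the early part dominates. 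Second Borel–Cantelli then gives infinitely many occurrences.

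The main obstacle is this last point together with the interpolation in the lower bound. The limit object is a sup over all $t\in[0,\infty)$, i.e., over all scales $x=1-u$ with $u\gg1/n$, where the process is dominated by early coefficients; the small-ball event therefore couples all scales. I would handle it by truncating to $t\le T=(\log n)^{C}$, or $t\le n^{\epsilon}$. The contribution from $t>T$ is the polynomial $\sum\varepsilon_kx^k$ at $1-x\ge T/n$, which is of order $\sqrt{n/T}\ll\sqrt nF^{-1}$. Its smallness must be shown not to cost much probability: a conditional small-ball estimate that decouples it, via Gaussian correlation (Royen) in the coupled Gaussian model, which yields $\PP(A\cap B)\ge\PP(A)\PP(B)$ for symmetric convex sets.
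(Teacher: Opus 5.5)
Your architecture is the paper's: KMT coupling of the even and odd coefficients to get two independent copies of $Y$, a first-moment bound on a mesh for $\liminf\ge1$, and a fresh independent block with the second Borel--Cantelli lemma for $\liminf\le1$. Your Ehrhard/Tsirelson argument for continuity and strict monotonicity of $F$ is fine. The genuine gap is the regularity of $F$ near $0$, which is exactly the ``final subtlety'' the paper flags.

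You assert that $-\log F(\delta)\asymp\log^3(1/\delta)$ implies $F^{-1}(cy)/F^{-1}(y)\to1$. It does not. Knowing $\Psi(x)=\log F(e^{-x})$ up to constant factors, or even up to $o(x^3)$ as in \cref{thm:1.2}, still lets $\Psi$ be nearly flat on stretches of length $\Omega(1)$, and yields only $\log F^{-1}(cy)-\log F^{-1}(y)=o(\log(1/\delta))$. What the argument needs is slope control: $\log F(\delta e^{t})-\log F(\delta)=\frac{2t}{\pi^2}\log^2(1/\delta)+o(|t|\log^2(1/\delta))$ uniformly in $|t|\le1$. The paper gets this in \cref{prop:4.1} from the Gaussian $B$-inequality of Cordero-Erausquin, Fradelizi and Maurey, which makes $\Psi$ concave, combined with \cref{thm:1.2}. (Concavity of $\Psi$ together with your $\asymp$ bound would already give one-sided slopes $|\Psi'(x)|\asymp x^2$, which is enough for \cref{thm:1.1}.) Ehrhard's inequality does not substitute for this: it gives concavity of $\Phi^{-1}(F(\delta))$ in $\delta$, not in $\log\delta$. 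Near $\delta=0$ it only bounds $\log F(\delta)-\log F((1-\eta)\delta)$ from below by something of order $\delta\,\mathrm{poly}(\log(1/\delta))$, which is useless.

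The missing regularity also breaks your quantitative bookkeeping. With $b_n=F^{-1}(\log^{-1/2}n)$, the bound $\PP(\snorm{f_n}_\infty\le(1-\eta)\sqrt n\,b_n)\le(\log n)^{-1-c}$ is false. The true gain over $F(b_n)^2=1/\log n$ is only $\exp(-\Theta_\eta(\log^2(1/b_n)))=\exp(-\Theta_\eta((\log\log n)^{2/3}))$. That is still summable along a geometric sequence, but the interpolation is where it bites.

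Your grid $\lfloor e^{k/\log k}\rfloor$ leaves gaps of relative size $\asymp1/\log\log n$. Across such a gap $f_n$ moves by about $\sqrt{n/\log\log n}$, far more than $\sqrt n\,b_n=\sqrt n\,e^{-\Theta((\log\log n)^{1/3})}$, so the interpolation fails. The paper instead uses the summation-by-parts bound $\max_{x\in[0,1]}|f_n(x)-f_m(x)|\le\max_{m\le k\le n}|S_k-S_m|$. It takes a mesh of relative spacing $\Delta_N=e^{-As_N}$, with $s_N=(\log\log N)^{1/3}$ and $A>2(3\pi^2/4)^{1/3}$, so that the interpolation error $\sqrt{\Delta_N N}\,\mathrm{poly}(s_N)$ is $o(\sqrt N\,b_N)$. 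Such a mesh has $\asymp e^{As_N}$ points per dyadic block $[N,2N]$. The union bound closes only because $\sum_N(\log N)^{-1}e^{As_N-c_\eta s_N^2}<\infty$; without the $\log^2(1/b)$ gain, the sum $\sum_N e^{As_N}/\log N$ diverges. The same slope control is what gives $b_n/b_m=1+o(\Delta_N)$ on the mesh and $b_{2N}/b_N\to1$.

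Two smaller points:
\begin{enumerate}
\item The choice $n_k=e^{k^{1+\epsilon}}$ fails in the upper bound because $\sum_k1/\log n_k<\infty$. The choice $n_k=k^k$, or the paper's $e^{Aj\log^{1/3}j}$, works.
\item Your truncation at $t\le T$ with a Royen decoupling is unnecessary. The coupling is uniform in $t\ge0$, and the old part is disposed of by $\snorm{f_{N_{j+1}}}_\infty\le\snorm{f_{N_j}}_\infty+\snorm{g_j}_\infty$ together with $\snorm{f_{N_j}}_\infty\ll\sqrt{N_j\log\log N_j}$.
\end{enumerate}
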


Given \cref{thm:1.1}, it is natural to seek a sharper understanding of $F(\delta)$. The Gaussian process underlying \cref{thm:1.1} was previously studied by Gao, Li, and Wellner \cite{GLW10}, who proved that
\[
    \log F(\delta) \asymp -\log^3(1/\delta).
\]
Our second main result identifies the leading constant.
\begin{theorem} \label{thm:1.2}
For $\delta\in(0,1/4)$,
\[
    \log F(\delta) = -\frac{2}{3\pi^2}\log^3(1/\delta) + o\big(\log^3(1/\delta)\big).
\]
\end{theorem}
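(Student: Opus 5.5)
The plan is to pass to logarithmic time, where the process becomes approximately stationary. Then I would treat $\sup_t|X_t|\le\delta$ as a small-ball event whose radius grows exponentially in the new time variable. The cost would be computed locally from the spectral density and integrated.

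\emph{Reduction to a stationary process.} Write $X_t=\int_0^1 e^{-st}\,dB_s$. Its covariance is $\E X_tX_u=(1-e^{-(t+u)})/(t+u)$. For $t=e^{\tau}$ set $Y_\tau=e^{\tau/2}X_{e^\tau}$. For $\tau,\sigma\ge 1$ the covariance of $Y$ equals
\[
\frac{1}{2\cosh((\tau-\sigma)/2)}
\]
up to errors of order $e^{-e^{\tau}}$. So $Y$ is, up to a negligible Gaussian perturbation, a stationary process $Z$. Its spectral density is $\hat\rho(\omega)=\pi/\cosh(\pi\omega)\asymp e^{-\pi|\omega|}$. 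Let $\ell=\log(1/\delta)$. The event becomes $|Y_\tau|\le \delta e^{\tau/2}=e^{-(\ell-\tau/2)}$, which is a nontrivial constraint only for $\tau\in[0,2\ell]$. The range $t\in[0,1]$, equivalently $\tau\le0$, contributes only $O(\ell^2)$ to $-\log F(\delta)$: there $X$ is smooth, with $\E X_t^2\asymp 1$, so a standard chaining or entropy bound suffices. The heuristic target is
\[
-\log F(\delta)\approx \int_0^{2\ell} c\,(\ell-\tau/2)^2\,d\tau=\tfrac{2c}{3}\ell^3 .
\]
Matching \cref{thm:1.2} requires $c=1/\pi^2$.

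\emph{Local small-ball rate.} The key lemma I would prove is the following. For the stationary process $Z$, for $L\to\infty$ slowly and $\varepsilon\to0$,
\[
-\log\PP\Big(\sup_{[0,L]}|Z|\le\varepsilon\Big)=(1+o(1))\,\frac{L}{\pi^2}\log^2(1/\varepsilon),
\]
uniformly in the relevant range. The heuristic is the Szeg\H{o}/log-determinant count $\frac{L}{4\pi}\int\log_+(\hat\rho(\omega)/\varepsilon^2)\,d\omega$. With $\hat\rho\approx e^{-\pi|\omega|}$ the integrand is $(2\log(1/\varepsilon)-\pi|\omega|)_+$, which gives exactly $L\log^2(1/\varepsilon)/\pi^2$.
\begin{itemize}
\item For the \emph{lower bound}, I would expand $Z$ on $[0,L]$ in a Fourier or prolate-type basis adapted to bandwidth $a=(2/\pi)\log(1/\varepsilon)$. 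About $aL/\pi$ coordinates each need to be pinned to size $\approx\varepsilon$ relative to their standard deviation. The high-frequency tail has variance $\lesssim\varepsilon^2$ and is handled with probability $\ge 1/2$. The two pieces are combined by the Gaussian correlation inequality.
\item For the \emph{upper bound}, I would restrict to a grid of spacing $\asymp 1/\log(1/\varepsilon)$ and use the density bound $\PP(\|G\|_\infty\le\varepsilon)\le(2\varepsilon)^m/\sqrt{(2\pi)^m\det\Sigma}$ for the grid vector $G$. The log-determinant of the resulting Toeplitz matrix is then evaluated asymptotically via Szeg\H{o}'s theorem, truncating the eigenvalues below $\varepsilon^2$ by Anderson's inequality.
\end{itemize}

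\emph{Gluing blocks.} I would partition $[0,2\ell]$ into $\ell/L$ blocks of length $L=o(\ell)$ with $L\to\infty$, on each of which $\varepsilon$ is essentially constant (relative error $O(L/\ell)$ in $\log(1/\varepsilon)$).
\begin{itemize}
\item \emph{Lower bound.} The Gaussian correlation inequality multiplies the block probabilities directly. Summing $L(\ell-\tau_j/2)^2/\pi^2$ over blocks gives the Riemann sum for $\tfrac{2}{3\pi^2}\ell^3$.
\item \emph{Upper bound.} I would take the union of block grids as one Gaussian vector and use the same determinant bound. The needed input is that the log-determinant is approximately additive across blocks, because correlations decay like $e^{-|\tau-\sigma|/2}$. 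This can be obtained from a Fischer-type inequality after discarding $O(\log(1/\varepsilon))$ boundary points per block. Those points cost only $O(\ell^2\cdot\ell/L)=o(\ell^3)$ in total.
\end{itemize}

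I expect the main obstacle to be the upper bound in the local lemma. The spectral density decays only exponentially, so the determinant involves eigenvalues spanning all scales down to $\varepsilon^2$. The count must be sharp to leading order, including the precise slope $\pi$ in $\log\hat\rho(\omega)\sim-\pi|\omega|$. I would try first a direct Szeg\H{o}-type asymptotic for the truncated log-determinant $\sum_i\log_+(\lambda_i/\varepsilon^2)$ of the covariance operator on $[0,L]$. This can be reduced, via Landau's eigenvalue-distribution results for time–frequency limiting, to the count $\#\{i:\lambda_i>\varepsilon^2 e^{s}\}\approx \frac{L}{\pi}\cdot\frac{2\log(1/\varepsilon)-s}{\pi}$, and then integrated in $s$.
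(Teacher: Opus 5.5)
Your heuristic constants are right: the block rate $\frac{L}{\pi^2}\log^2(1/\varepsilon)$ integrates to $\frac{2}{3\pi^2}\ell^3$. A blockwise direct $L^\infty$ analysis would also be a genuinely different route from the paper's, which sandwiches the sup-norm event between two weighted $L^2$ events and then counts eigenvalues of a single covariance operator. But the lower bound in your local lemma has a gap.

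\emph{Local lower bound.} Pinning coordinates via \v{S}id\'ak/Royen gives the sharp cost $\sum_i\log(\sqrt{\lambda_i}/\varepsilon)$ only if two things hold: the coordinates' marginal variances track $\widehat K(\xi)\approx e^{-\pi|\xi|}$, \emph{and} the basis functions have at most polynomially large sup norms, so that small coordinates force $\sup_{[0,L]}|Z|$ small. None of your candidate bases does both.
\begin{itemize}
\item Fourier coefficients on $[0,L]$ leak. Without a window their variances decay only like $k^{-2}$. With a compactly supported window $w$ they are governed by $|\widehat w|^2$, which cannot decay exponentially, so far more than $aL/\pi$ coefficients exceed $\varepsilon$.
\item Prolate functions of bandwidth $a$ have spectra spread over $[-a,a]$, so their marginal variances are dominated by low frequencies.
\item In the Karhunen--Lo\`eve basis the variances are right, but the only readily available bound is $\|\phi_i\|_\infty\le(2\lambda_i)^{-1/2}$, from $\sum_i\lambda_i\phi_i^2=K(0)$. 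With it every coordinate costs about $\log(1/\varepsilon)$ instead of $\log(\sqrt{\lambda_i}/\varepsilon)$, which doubles the leading constant.
\end{itemize}
What is missing is a local $L^\infty$--$L^2$ comparison that loses only a polylogarithmic factor in the radius. This is exactly the paper's key estimate, \cref{lem:2.4}. There a window with $|\widehat w(\xi)|\ll e^{-\Omega(|\xi|/\log^2(|\xi|+e))}$ is used only to show that high modes rarely dominate, never to count degrees of freedom; the $L^2$ cost then comes from the eigenvalue count in \cref{lem:A.3}. A smaller point: for $t\in[0,1]$, chaining over the index set only gives a cost of order $1/\delta$. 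You need the analytic structure there, as in the Taylor-coefficient argument of \cref{lem:2.1}.

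\emph{Gluing for the upper bound.} Fischer's inequality gives $\det\Sigma\le\prod_j\det\Sigma_{jj}$, which is the wrong direction when you bound $\PP\le(2\varepsilon)^m/\sqrt{(2\pi)^m\det\Sigma}$. Discarding $O(1)$-length buffers also cannot make the blocks nearly independent. Since $\int\log\widehat K(\xi)\,(1+\xi^2)^{-1}\,d\xi=-\infty$, the process $Z$ is deterministic, so the conditional covariance of a block given finely sampled earlier blocks can be tiny.

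The additivity you need does hold for the regularized quantity $\log\det(I+D^{-1/2}\Sigma D^{-1/2})$ with $D=\operatorname{diag}(\delta^2e^{\tau_i})$. You obtain it after adding independent $N(0,D)$ noise, at a cost of $e^{O(m)}$. After this normalization the kernel becomes $\delta^{-2}(e^{\tau}+e^{\sigma})^{-1}$, whose off-diagonal blocks have geometrically decaying singular values. Weyl interlacing then gives additivity up to $O(\ell^3/L)$. This is the Laptev-type block decomposition in the proof of \cref{prop:A.4}, and you would have to supply it.

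By contrast, the step you flagged as the main obstacle is the easy one. Apply the same noise trick on a single block. For the $m\times m$ Toeplitz matrix $T_m(g)$ with symbol $g$, finite-past prediction errors decrease to the geometric mean, which gives the one-sided Kolmogorov--Szeg\H{o} bound $\log\det T_m(g)\ge\frac{m}{2\pi}\int_{-\pi}^{\pi}\log g(\theta)\,d\theta$. Take $g$ to be the aliased symbol plus $\varepsilon^2$, on a grid of spacing $h\le\pi^2/(2\log(1/\varepsilon))$. This already gives a block cost of at least $\frac{L}{\pi^2}\log^2(1/\varepsilon)-O(L\log(1/\varepsilon))$, with no need for Landau's theorem.
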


In fact, we prove a quantitative version of \cref{thm:1.2}; see \cref{thm:2.7}. Combining \cref{thm:1.2} and \cref{thm:1.1} immediately yields the asymptotic stated in the abstract. We also note that the method underlying \cref{thm:1.2} is fairly robust: the key probabilistic estimates sandwich the relevant event between two $L^2$ events that can be handled by spectral methods. In particular, the same strategy should apply to other Gaussian processes arising from sufficiently smooth kernels, such as those appearing in work of Aurzada, Gao, K\"{u}hn, Li, and Shao \cite{AGKLS11}; we do not pursue this direction.

\subsection{Notation and conventions}\label{subsec:1.2}
We use the asymptotic notations $\gg$, $\ll$, $\Omega(\cdot)$, $O(\cdot)$, and $o(\cdot)$ in the standard way. Thus $X=O(Y)$ and $X\ll Y$ both mean that $|X|\le CY$ for some absolute constant $C$ (which may change from line to line), while $X=\Omega(Y)$ and $X\gg Y$ mean that $Y\ll X$. We write $X=o(Y)$ if $X/Y\to0$ in the relevant limit. If $X$ and $Y$ are positive quantities, then $X\asymp Y$ means $Y\ll X\ll Y$. All logarithms are natural unless explicitly indicated otherwise. We also record our Fourier-transform convention: for $f\in L^1(\R)$,
\[
    \widehat f(\xi)=\int_{\R} f(x)e^{-i\xi x}\,dx,
\]
so that the inversion formula reads
\[
    f(x)=\frac{1}{2\pi}\int_{\R}\widehat f(\xi)e^{i\xi x}\,d\xi.
\]

\subsection{Overview of the proof} \label{subsec:1.3}
We begin with a brief sketch of the proof of \cref{thm:1.1}. For $x\in(0,1]$, we write $x=e^{-t/n}$ with $t\ge0$, and for $x\in[-1,0)$, we write $x=-e^{-t/n}$ with $t\ge0$. This introduces logarithmic coordinates near the two endpoints, and an exact identity expresses $\snorm{f_n}_{\infty}$ in terms of the suprema of the resulting profiles together with the value at $x=0$. To be precise, we set
\begin{equation} \label{eq:1.2}
    f_{n,t}^{+}=n^{-1/2}f_n(e^{-t/n}),
    \qquad
    f_{n,t}^{-}=n^{-1/2}f_n(-e^{-t/n}).
\end{equation}
We also define
\[
    S_s^n=\frac{1}{\sqrt n}\sum_{0\le k\le \lfloor ns\rfloor}\varepsilon_k.
\]
Via the Koml\'os--Major--Tusn\'ady (KMT) approximation \cite{KMT75,KMT76}, we may couple $S_s^n$ with a standard Brownian motion $B_s$ so that $|S_s^n-B_s|\ll (\log n)/\sqrt n$. By coupling the even and odd coefficients separately, one is naturally led to stochastic integrals of the form
\[
    \int_0^1 e^{-st}\,dB_s^{+},
    \qquad
    \int_0^1 e^{-st}\,dB_s^{-},
\]
where $B_s^{+}$ and $B_s^{-}$ are independent Brownian motions. This reduces the problem to the study of independent copies of the Gaussian process $(Y_t)_{t\ge0}$ defined by
\[
    Y_t=\int_0^1 e^{-st}\,dB_s,
\]
and of the associated small-ball probability
\[
    F(\delta)=\PP\big(\sup_{t\ge0}|Y_t|\le\delta\big),
\]
whose asymptotics as $\delta\downarrow0$ drive the rest of the argument.

Now we discuss the proof of \cref{thm:1.2}. Setting $Z_t=e^{t/2}Y_{e^t}$, we find that
\[
    \E[Z_tZ_s]=\frac12 \operatorname{sech}\!\Big(\frac{s-t}{2}\Big)(1-e^{-e^s-e^t}).
\]
Using a comparison argument, it is enough to study the stationary Gaussian process $X_t$ with covariance
\[
    \E[X_tX_s]=\frac12 \operatorname{sech}\!\Big(\frac{s-t}{2}\Big)
\]
and to bound
\[
    G(\delta)=\PP\big(\sup_{t\ge0}e^{-t/2}|X_t|\le\delta\big).
\]
The next step is to compare this $L^\infty$ event with the $L^2$ quantity
\[
    H(\delta)=\PP\Big(\int_0^{\infty} e^{-t}X_t^2\,dt\le\delta^2\Big).
\]
Since
\[
    \E\Big[\int_{4\log(1/\delta)}^{\infty} e^{-t}|X_t|^2\,dt\Big]\le \delta^4/2,
\]
Markov's inequality gives
\[
    \PP\Big(\int_{4\log(1/\delta)}^{\infty} e^{-t}|X_t|^2\,dt\le 4\delta^2\log(1/\delta)\Big)\ge \frac12
 \]
for all sufficiently small $\delta$, and on the event defining $G(\delta)$ we have
\[
    \int_0^{4\log(1/\delta)} e^{-t}X_t^2\,dt \le 4\delta^2\log(1/\delta),
\]
so the Gaussian correlation inequality \cite[Theorem~1]{Royen14} gives
\[
    G(\delta)\le 2H(4\delta\log(1/\delta)).
\]
For the reverse implication, after introducing a smooth cutoff $w$ we prove the local comparison bound
\[
    \PP\Big(\max_{-1\le t\le1}|w(t)X_t| \ge L\Big(\int_{-1}^1 |w(t)X_t|^2\,dt\Big)^{1/2}\Big)
    \ll e^{-\Omega(L/\log^2(L+e))}.
\]
This yields
\[
    H(\delta)\exp\big(-O(\log^2(1/\delta))\big)\le G\big(C\delta\log^4(1/\delta)\big)
\]
and since
\[
    C\theta\log^4(1/\theta)\le \delta,
    \qquad
    \theta=\frac{\delta}{4C\log^4(1/\delta)},
\]
for all sufficiently small $\delta$, monotonicity gives
\[
    H\Big(\frac{\delta}{4C\log^4(1/\delta)}\Big)\exp\big(-O(\log^2(1/\delta))\big)\ll G(\delta).
\]
The proof of this comparison ultimately rests on Fourier analysis after introducing the smooth cutoff weight, and the appendix then establishes the corresponding sharp $L^2$ small-ball asymptotic, with error term $O(\log(1/\delta)^{5/2}\sqrt{\log\log(1/\delta)})$.

Returning to \cref{thm:1.1}, the KMT coupling transfers the Gaussian estimates back to Littlewood polynomials. The argument then proceeds in the spirit of the law of the iterated logarithm: a sparse mesh captures the relevant scales, the lower bound follows from a first-moment argument, and the upper bound comes from splitting the polynomial into an old part and an independent fresh block. A final subtlety is that the asymptotic for $F(\delta)$ is not quite precise enough to control the difference between $F(\delta)$ and $F((1+\eta)\delta)$ directly. To handle this, we use the Gaussian $B$-inequality of Cordero-Erausquin, Fradelizi, and Maurey \cite{CFM04}, which implies that $t\mapsto F(e^t)$ is log-concave and yields the continuity estimates that we need.

\subsection{Acknowledgments} \label{subsec:1.4}
BL thanks Dan Mikulincer for helpful discussions. This research was conducted while MS held a Clay Research Fellowship.

\subsection*{AI Disclosure}
ChatGPT Pro drew our attention to the Gaussian $B$-inequality used in \cref{prop:4.1} (see \href{https://chatgpt.com/share/69d1e613-3604-83e8-9b95-32a47dd030f0}{link}). In addition, ChatGPT Codex was used to help write the manuscript.
\section{\texorpdfstring{$L^{\infty}$}{L-infinity} small-ball probabilities for a Gaussian process} \label{sec:2}
\subsection{Preparatory steps}\label{subsec:2.1}
Before proving \cref{thm:1.2}, we make a number of convenient reductions. Let $B$ be a standard Brownian motion on $[0,\infty)$, and for $t > 0$ define
\begin{equation} \label{eq:2.1}
    Y_t=\int_0^1 e^{-ut}\,dB_u,
    \qquad
    \wt Y_t=\int_0^{\infty} e^{-ut}\,dB_u.
\end{equation}
By It\^o's isometry,
\[
    \E[Y_sY_t]=\int_0^1 e^{-u(s+t)}\,du = \frac{1-e^{-s-t}}{s+t},
    \qquad
    \E[\wt Y_s\wt Y_t]=\frac{1}{s+t}.
\]
Moreover, $\wt Y_s$ has the same distribution as $Y_s+e^{-s}\wt Y_s'$, where $(\wt Y_t')_{t>0}$ is an independent copy of $\wt Y$. We also define
\begin{equation} \label{eq:2.2}
    Z_t=e^{t/2}Y_{e^t},
    \qquad
    X_t=e^{t/2}\wt Y_{e^t}.
\end{equation}
If $K(t)=\operatorname{sech}(t/2)/2$, then
\begin{equation} \label{eq:2.3}
    \E[Z_sZ_t]=K(s-t)(1-e^{-e^s-e^t}),
    \qquad
    \E[X_sX_t]=K(s-t).
\end{equation}
Note also that $X_t$ has the same distribution as $Z_t+e^{-e^t}X_t'$, where $(X_t')_{t\in\R}$ is an independent copy of $X$. Since $\E[X_tX_s]$ depends only on $t-s$, the process $X$ is stationary.

Recall that
\[
    F(\delta)=\PP\Big(\sup_{t\ge0}|Y_t|\le\delta\Big)=\PP\Big(\sup_{t \in \R} e^{-t/2}|Z_t|\le\delta\Big),
\]
and define
\[
    G(\delta)=\PP\Big(\sup_{t\ge0} e^{-t/2}|X_t|\le\delta\Big).
\]
All applications of the Gaussian correlation inequality, the Gaussian $B$-inequality, and Anderson's inequality below are obtained by first applying the corresponding finite-dimensional statement to process values on a finite mesh (or to Riemann sums for the $L^2$ events) and then passing to the limit using continuity and monotone convergence.
Our first step is to show that $F$ and $G$ are closely related.

\begin{lemma}\label{lem:2.1}
Let $\delta\in(0,1/4)$. Then
\[
    \exp(-O(\log(1/\delta)^2))G(\delta)\le F(\delta)
\]
and
\[
    F(\delta/2)\,G(1)\,\exp\big(-O(\log^2(1/\delta)\log\log(1/\delta))\big)\ll G(\delta).
\]
\end{lemma}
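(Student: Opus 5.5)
The plan is to deduce both bounds from the two distributional identities of \cref{subsec:2.1}, $\wt Y\overset{d}{=}Y+e^{-s}\wt Y'$ and $X\overset{d}{=}Z+e^{-e^t}X'$, combined with Anderson's inequality, the Gaussian correlation inequality \cite{Royen14}, and a crude small-ball estimate for analytic Gaussian processes on a bounded interval. That last estimate I isolate first.

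\textbf{Auxiliary estimate.} I claim that for an interval $I$ of length $O(1)$,
\[
\PP\Big(\sup_{s\in I}|Y_s|\le\delta\Big)\ge \exp(-O(\log^2(1/\delta))),\qquad \PP\Big(\sup_{t\in I}|X_t|\le\delta\Big)\ge \exp(-O(\log^2(1/\delta))).
\]
For $Y$ on $[0,1]$ I would Taylor expand:
\[
Y_s=\sum_{k\ge0}\frac{(-s)^k}{k!}g_k,\qquad g_k=\int_0^1u^k\,dB_u .
\]
Here $g_k$ is Gaussian with variance $1/(2k+1)$. Take $K=C\log(1/\delta)$. It suffices to have
\begin{enumerate}[(a)]
\item $|g_k|\le \delta/(4K)$ for all $k\le K$, and
\item $\sum_{k>K}|g_k|/k!\le\delta/2$.
\end{enumerate}
Each event in (a) is a symmetric slab with probability $\gg \delta/K$. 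Event (b) is a symmetric convex set of probability $\ge1/2$ once $C$ is large, by Markov applied to $\E\sum_{k>K}|g_k|/k!\le 1/K!$. The Gaussian correlation inequality multiplies these lower bounds, giving $(\delta/K)^{O(K)}=\exp(-O(\log^2(1/\delta)))$.

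For the stationary process $X$ the argument is identical after expanding around the midpoint of $I$. The spectral density of $K(t)=\tfrac12\operatorname{sech}(t/2)$ decays like $e^{-\pi|\xi|}$, so $\E|X^{(k)}_{t_0}|^2\ll (2k)!\,\pi^{-2k}$. Hence the Taylor series at $t_0$ converges geometrically on $|t-t_0|\le 1/2$, which is comfortably inside radius $\pi$. Intervals of length $1$ are therefore fine, and longer intervals are covered by unit pieces. I expect this step to be the main technical point: one must make the derivative-moment bound and the tail truncation uniform. Everything else is soft.

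\textbf{First inequality.} Write $\{\sup_{t\in\R}e^{-t/2}|Z_t|\le\delta\}$ as the intersection of $A_+=\{\sup_{t\ge0}e^{-t/2}|Z_t|\le\delta\}$ and $A_-=\{\sup_{s\in(0,1]}|Y_s|\le\delta\}$; the second uses $e^{-t/2}Z_t=Y_{e^t}$. Both are symmetric convex, so correlation gives $F(\delta)\ge\PP(A_+)\PP(A_-)$. Since $X=Z+e^{-e^t}X'$ with $X'$ independent, Anderson's inequality gives
\[
\PP(A_+)\ge \PP\Big(\sup_{t\ge0}e^{-t/2}|X_t|\le\delta\Big)=G(\delta).
\]
Finally $\PP(A_-)\ge\exp(-O(\log^2(1/\delta)))$ by the auxiliary estimate for $Y$.

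\textbf{Second inequality.} Using $X=Z+e^{-e^t}X'$ with independence,
\[
G(\delta)\ge F(\delta/2)\cdot \PP\Big(\sup_{t\ge0}e^{-t/2-e^t}|X'_t|\le\delta/2\Big).
\]
Here I used that $\sup_{t\ge0}e^{-t/2}|Z_t|\le\sup_{t\in\R}e^{-t/2}|Z_t|$.

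Set $T=\log\log(2/\delta)$, so that $e^{-e^t}\le\delta/2$ for $t\ge T$. Then the event $\{\sup_{t\ge0}e^{-t/2}|X'_t|\le1\}$, which has probability $G(1)$, already controls the range $t\ge T$. The remaining range is handled by the event $\{\sup_{t\in[0,T]}|X'_t|\le\delta/2\}$. Both events are symmetric convex, so correlation lower-bounds their joint probability by $G(1)\cdot\PP(\sup_{[0,T]}|X'|\le\delta/2)$.

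To bound the last factor, cover $[0,T]$ by $O(T)$ unit intervals. Apply correlation once more, together with the auxiliary estimate on each piece. This gives
\[
\exp\big(-O(T\log^2(1/\delta))\big)=\exp\big(-O(\log^2(1/\delta)\log\log(1/\delta))\big),
\]
which is the claimed bound.
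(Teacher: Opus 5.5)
Your proposal is correct and follows essentially the same route as the paper. For the first bound you use Royen's inequality together with Anderson's inequality (via $X\overset{d}{=}Z+e^{-e^t}X'$); for the second you split at $T\asymp\log\log(1/\delta)$, let $G(1)$ control the tail, and obtain $\lceil T\rceil$ unit-interval small-ball estimates by forcing finitely many Taylor coefficients to be small and handling the rest by Markov. The differences are cosmetic: the paper gets the unit-interval bound for $X$ by passing to $\wt Y$ on $[1,e]$ and expanding at $s=2$ rather than using the spectral density, and it drops the factor $e^{-t/2}$, which forces $T=2\log\log(1/\delta)$. One small slip: your bound $\E|X^{(k)}_{t_0}|^2\ll(2k)!\,\pi^{-2k}$ gives radius of convergence $\pi/2$ rather than $\pi$, which is still comfortably larger than $1/2$.
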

\begin{proof}
We first compare $F$ and $G$ on the range $t\ge1$. By Royen's Gaussian correlation inequality,
\[
    F(\delta)
    \ge \PP\Big(\sup_{t\ge1}|Y_t|\le\delta\Big)
       \PP\Big(\sup_{0\le t\le1}|Y_t|\le\delta\Big)
\]
and since
\[
    X_t \overset{d}= Z_t+e^{-e^t}X_t'
\]
with $X_t'$ independent of $Z_t$, Anderson's inequality \cite[Theorem~1]{Anderson55} gives
\[
    \PP\Big(\sup_{t\ge0} e^{-t/2}|Z_t|\le\delta\Big)\ge G(\delta).
\]
Moreover,
\[
    \PP\Big(\sup_{t\ge1}|Y_t|\le\delta\Big)
    =\PP\Big(\sup_{t\ge0} e^{-t/2}|Z_t|\le\delta\Big),
\]
so it remains only to control the contribution from the short interval $[0,1]$, namely to prove that
\[
    \PP\Big(\sup_{0\le t\le1}|Y_t|\le\delta\Big)\ge \exp(-O(\log(1/\delta)^2)).
\]
On this interval the process is analytic in $t$, so we may control it through its Taylor coefficients. The point is to split the series into a finite-dimensional part, where we ask that each coefficient be individually small, and a tail part, which we show is typically negligible.
For $0\le t\le1$ we have the convergent expansion
\[
    Y_t=\sum_{m=0}^{\infty}(-t)^m a_m,
    \qquad
    a_m=\int_0^1 \frac{u^m}{m!}\,dB_u,
\]
and therefore
\[
    \sup_{0\le t\le1}|Y_t|\le \sum_{m=0}^{\infty}|a_m|.
\]
Moreover, each $a_m$ is a centered Gaussian with
\[
    \E[a_m^2]=\int_0^1 \frac{u^{2m}}{(m!)^2}\,du
    =\frac{1}{(2m+1)(m!)^2}.
\]
Set \(M=\lceil 10\log(1/\delta)\rceil\) and \(\lambda=\delta/(4(M+1))\), and write
\[
    A_M=\bigcap_{m=0}^{M}\{|a_m|\le \lambda\},
    \qquad
    B_M=\Big\{\sum_{m>M}|a_m|\le \delta/2\Big\}.
\]
The event $A_M$ says that each of the first $M+1$ coefficients is at most $\lambda$ in absolute value, while $B_M$ controls the remaining tail. Since $(M+1)\lambda=\delta/4$, on $A_M$ we have
\[
    \sum_{m=0}^{M}|a_m|\le (M+1)\lambda=\delta/4,
\]
and therefore on $A_M\cap B_M$,
\[
    \sum_{m=0}^{\infty}|a_m|
    \le (M+1)\lambda+\delta/2
    \le \delta.
\]
Hence
\[
    \PP\Big(\sup_{0\le t\le1}|Y_t|\le\delta\Big)\ge \PP(A_M\cap B_M).
\]
We estimate the two pieces separately: $B_M$ is the tail event, and $A_M$ is a finite-dimensional small-ball event for the coefficient vector $(a_0,\dots,a_M)$.
Now, by Stirling,
\[
    \sum_{m>M}\big(\E[a_m^2]\big)^{1/2}\ll \sum_{m>M}\frac{1}{m!}\ll \frac{1}{(M+1)!},
\]
so in particular $\E[\sum_{m>M}|a_m|]\ll (M+1)!^{-1}\ll \delta^2$. Thus, by Markov's inequality,
\[
    \PP(B_M)\ge \frac12
\]
for all sufficiently small $\delta$. Furthermore, by \v{S}id\'ak's lemma,
\[
    \PP(A_M)\ge \prod_{m=0}^{M}\PP(|a_m|\le \lambda)\ge \exp(-O(\log(1/\delta)^2)). 
\]
Finally, applying Royen's inequality once more to the symmetric convex events $A_M$ and $B_M$, we obtain
\[
    \PP(A_M\cap B_M)\ge \PP(A_M)\PP(B_M)\ge \exp(-O(\log(1/\delta)^2)).
\]
This proves the first bound.

For the reverse bound, let
\[
    T=2\log\log(1/\delta).
\]
We may assume $\delta$ is sufficiently small. We again start from the decomposition
\[
    X_t \overset{d}= Z_t+e^{-e^t}X_t'
\]
with $X_t'$ independent of $Z_t$. This gives
\begin{align*}
G(\delta)
&= \PP\Big(\sup_{t\ge0}e^{-t/2}|X_t|\le\delta\Big) \\
&\ge \PP\Big(\sup_{t\ge0}e^{-t/2}|Z_t|\le\delta/2\Big)
   \PP\Big(\sup_{t\ge0}e^{-t/2-e^t}|X_t'|\le\delta/2\Big) \\
&\ge F(\delta/2)\,
   \PP\Big(\sup_{t\ge0}e^{-e^t}|X_t'|\le\delta/2\Big),
\end{align*}
where we used
\[
    \PP\Big(\sup_{t\ge0}e^{-t/2}|Z_t|\le\delta/2\Big)
    = \PP\Big(\sup_{s\ge1}|Y_s|\le\delta/2\Big)
    \ge F(\delta/2).
\]
To estimate the second factor, we split the event at time $T$. For $0\le t\le T$, the condition $|X_t'|\le \delta/2$ certainly implies
\[
    e^{-e^t}|X_t'|\le \delta/2.
\]
Also, for $t\ge T$ one has
\[
    e^{-e^t}\le \delta^2 e^{-t/2}
\]
for all sufficiently small $\delta$, and hence
$e^{-t/2}|X_t'|\le \delta^{-1}/2$ implies $e^{-e^t}|X_t'|\le \delta/2.$
Therefore, if
\[
    C_T=\Big\{\sup_{0\le t\le T}|X_t'|\le \delta/2\Big\}
    \qquad\text{and}\qquad
    D_T=\Big\{\sup_{t\ge T}e^{-t/2}|X_t'|\le \delta^{-1}/2\Big\},
\]
then $C_T\cap D_T$ is contained in the event
\[
    \Big\{\sup_{t\ge0}e^{-e^t}|X_t'|\le \delta/2\Big\}.
\]
Hence, by Royen's inequality,
\[
    \PP\Big(\sup_{t\ge0}e^{-e^t}|X_t'|\le \delta/2\Big)\ge \PP(C_T)\PP(D_T).
\]
By stationarity and another application of Royen's inequality,
\[
    \PP(C_T)\ge
    \Big(\PP\big(\sup_{0\le t\le1}|X_t|\le \delta/2\big)\Big)^{\lceil T\rceil}.
\]
Moreover, since $\delta^{-1}/2\ge1$ and restricting to $t\ge T$ only enlarges the event,
\[
    \PP(D_T)\ge G(1).
\]
Thus
\[
    G(\delta)\ge
    F(\delta/2)\,G(1)\,
    \Big(\PP\big(\sup_{0\le t\le1}|X_t|\le \delta/2\big)\Big)^{\lceil T\rceil}.
\]
This reduces the problem to bounding the small-ball probability of $X$ on a unit interval. By definition of $X_t$,
\[
    \PP\Big(\sup_{0\le t\le1}|X_t|\le\delta\Big)
    = \PP\Big(\sup_{1\le s\le e}s^{1/2}|\wt Y_s|\le\delta\Big)
    \ge \PP\Big(\sup_{1\le s\le e}|\wt Y_s|\le e^{-1/2}\delta\Big).
\]
We estimate this in the same spirit as before, now using the expansion of $\wt Y_t$ around $t=2$. As above, the idea is to control finitely many Taylor coefficients directly and then show that the remaining tail contributes very little.

Now write
\[
    \wt Y_t
    = \int_0^{\infty} e^{-2u}e^{-(t-2)u}\,dB_u
    = \sum_{m=0}^{\infty} (-(t-2))^m a_m,
\]
where
\[
    a_m=\int_0^{\infty}\frac{e^{-2u}u^m}{m!}\,dB_u.
\]
Since $|t-2|\le1$ for $t\in[1,e]$, we have
\[
    \sup_{1\le t\le e}|\wt Y_t|\le \sum_{m=0}^{\infty}|a_m|.
\]
Moreover,
\[
    \E[a_m^2]
    = \int_0^{\infty}\frac{e^{-4u}u^{2m}}{(m!)^2}\,du
    = \frac{(2m)!}{4^{2m+1}(m!)^2}
    \ll 4^{-m}.
\]
Let $M=\lceil C\log(1/\delta)\rceil$ and $\lambda=\frac{e^{-1/2}\delta}{4(M+1)},$ where $C>0$ is a sufficiently large absolute constant. If
\[
    A_M=\bigcap_{m=0}^{M}\{|a_m|\le \lambda\}
    \qquad\text{and}\qquad
    B_M=\Big\{\sum_{m>M}|a_m|\le e^{-1/2}\delta/2\Big\},
\]
then, just as before, $A_M$ controls the first $M+1$ coefficients and $B_M$ controls the tail. Consequently, on $A_M\cap B_M$ we have
\[
    \sup_{1\le t\le e}|\wt Y_t|
    \le (M+1)\lambda + e^{-1/2}\delta/2
    \le e^{-1/2}\delta.
\]
Hence
\[
    \PP\Big(\sup_{1\le t\le e}|\wt Y_t|\le e^{-1/2}\delta\Big)\ge \PP(A_M\cap B_M).
\]
If $\tau_m^2=\E[a_m^2]$, then $\tau_m\ll 2^{-m}$. Thus
\[
    \sum_{m>M}\tau_m\ll 2^{-M}\ll \delta
\]
for $C$ large enough, and so Markov's inequality gives
\[
    \PP(B_M)\ge \frac12.
\]
Thus the tail event $B_M$ has probability bounded below by an absolute constant. To handle the finite-dimensional event $A_M$, we use \v{S}id\'ak's lemma:
\[
    \PP(A_M)\ge \prod_{m=0}^{M}\PP(|a_m|\le \lambda)\ge \exp(-O(\log(1/\delta)^2)).
\]
A final application of Royen's inequality yields
\[
    \PP(A_M\cap B_M)\ge \PP(A_M)\PP(B_M)\ge \exp(-O(\log^2(1/\delta))),
\]
and hence
\[
    \PP\Big(\sup_{0\le t\le1}|X_t|\le\delta\Big)\ge \exp(-O(\log^2(1/\delta))).
\]
Substituting this into the previous estimate and recalling that $T\asymp \log\log(1/\delta)$ gives
\[
    G(\delta)\ge
    F(\delta/2)\,G(1)\,
    \exp\big(-O(\log^2(1/\delta)\log\log(1/\delta))\big),
\]
as required.
\end{proof}

We next construct a cutoff function that equals $1$ on a smaller interval and still has almost-exponential Fourier decay. This lets us localize the process $X$ without losing the Fourier decay needed to rule out cancellations. The construction is standard, but we record one for completeness.

\begin{lemma} \label{lem:2.2}
There exists a function $w\colon\R\to[0,1]$ such that $\operatorname{supp}(w)\subset[-1/2,1/2]$, $w(x)=1$ for $x\in[-1/4,1/4]$, and
\begin{equation} \label{eq:2.4}
    |\widehat w(\xi)| \ll e^{-\Omega(|\xi|/\log^2(|\xi|+e))}
\end{equation}
for all $\xi\in\R$.
\end{lemma}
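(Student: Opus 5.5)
We build $w$ from an infinite convolution of normalized box functions, in the style of the classical Denjoy--Carleman/Hörmander constructions. Fix a sequence of positive widths $a_1\ge a_2\ge\cdots$ with $\sum_j a_j\le 1/8$; concretely we take $a_j=c/(j\log^2(j+e))$ with $c>0$ small. Let $\chi_a=a^{-1}\one_{[-a/2,a/2]}$, and define $\phi=\chi_{a_1}*\chi_{a_2}*\cdots$. The partial convolutions are probability densities supported in $[-\sum_{j\le k}a_j/2,\sum_{j\le k}a_j/2]$. They converge in total variation (indeed uniformly, after the second factor), because the Fourier transforms converge. Hence $\phi\ge0$, $\int\phi=1$, and $\operatorname{supp}\phi\subset[-1/16,1/16]$. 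Finally set $w=\one_{[-3/8,3/8]}*\phi$. This gives $0\le w\le1$, $w=1$ on $[-3/8+1/16,\,3/8-1/16]\supset[-1/4,1/4]$, and $\operatorname{supp} w\subset[-3/8-1/16,3/8+1/16]\subset[-1/2,1/2]$.

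For the Fourier bound, note that $\widehat{\chi_a}(\xi)=\frac{\sin(a\xi/2)}{a\xi/2}$, so $|\widehat{\chi_a}(\xi)|\le\min(1,2/(a|\xi|))$. Then
\[
|\widehat w(\xi)|\le|\widehat{\one_{[-3/8,3/8]}}(\xi)|\prod_j|\widehat{\chi_{a_j}}(\xi)|\le \frac{2}{|\xi|}\prod_{j\le N}\frac{2}{a_j|\xi|},
\]
where $N=N(\xi)$ is any index with $a_j|\xi|\ge 2e$ for all $j\le N$. Each factor in the product is then at most $e^{-1}$, so $|\widehat w(\xi)|\ll e^{-N}$. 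The main step is to estimate the largest admissible $N$. Since $a_j$ is decreasing, the constraint reduces to $a_N|\xi|\ge 2e$, i.e.\ $N\log^2(N+e)\le c|\xi|/(2e)$. This permits $N\asymp |\xi|/\log^2(|\xi|+e)$ for $|\xi|$ large. The resulting bound is $|\widehat w(\xi)|\ll e^{-\Omega(|\xi|/\log^2(|\xi|+e))}$, and for bounded $|\xi|$ it is trivial since $|\widehat w|\le\int w\le1$.

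The only delicate point is the balance between summability of $a_j$, which is needed for compact support and the choice $\log^2$ rather than $\log$, and the size of $N(\xi)$. With $a_j\asymp 1/(j\log^2 j)$, both the support condition and the claimed decay rate hold. I also need to check that the infinite convolution is well defined and that the Fourier transform of the limit equals the infinite product. This follows from the weak convergence of the probability measures $\chi_{a_1}*\cdots*\chi_{a_k}$: their characteristic functions converge pointwise to the product, and the limiting measure has an $L^1$ density because its characteristic function is integrable by the decay just shown.
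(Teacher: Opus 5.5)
Your proposal is correct and is essentially the paper's proof. Both build $w=\one_{[-3/8,3/8]}*\phi$, where $\phi$ is an infinite convolution of normalized boxes with widths $\asymp 1/(j\log^2 j)$ supported in $[-1/16,1/16]$, and both bound the first $N\asymp|\xi|/\log^2(|\xi|+e)$ sinc factors by a constant less than $1$. Your extra step, justifying the infinite convolution via weak convergence and integrability of the limiting characteristic function, fills in a detail the paper leaves implicit.
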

\begin{proof}
Let $a_k=k\log^2 k$, and choose $C$ so that $\sum_{k\ge C} a_k^{-1}\le2^{-4}$. For $k\ge C$, let
\[
    f_k(t)=\frac{a_k}{2}\mathbbm{1}_{[-1/a_k,\,1/a_k]}(t),
\]
and write $g=\mathop{\ast}_{k\ge C} f_k$. Define
\[
    w(x)=\int_{x-3/8}^{x+3/8} g(t)\,dt.
\]
The support and plateau properties are immediate. Moreover,
\[
    \widehat w(\xi)=\frac{2\sin(3\xi/8)}{\xi}\prod_{k\ge C}\frac{\sin(\xi/a_k)}{\xi/a_k}.
\]
If $N=\max\{k\ge C:a_k\le|\xi|\}$, then $N\asymp |\xi|/\log^2(|\xi|+e)$, and for $k\le N/4$ we have $a_k\le|\xi|/2$ once $|\xi|$ is large. Since $|\sin(\xi/a_k)/(\xi/a_k)|\le1/2$ for such $k$,
\[
    |\widehat w(\xi)|\ll 2^{-N/4}\ll e^{-\Omega(|\xi|/\log^2(|\xi|+e))},
\]
which proves the claim.
\end{proof}

\subsection{\texorpdfstring{Passing to $L^2$ small-ball probabilities}{Passing to L2 small-ball probabilities}} \label{subsec:2.2}
The next step is to compare the $L^{\infty}$ event defining $G(\delta)$ with the $L^2$ event
\[
    H(\delta)=\PP\Big(\int_0^{\infty} e^{-t}X_t^2\,dt\le\delta^2\Big).
\]

The easy direction is a direct consequence of the Gaussian correlation inequality.
\begin{lemma}\label{lem:2.3}
There exists $\delta_0\in(0,1)$ such that whenever $0<\delta<\delta_0$,
\[
    H(4\delta\log(1/\delta))\ge \frac{G(\delta)}{2}.
\]
\end{lemma}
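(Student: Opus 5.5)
We split the integral defining $H$ at $T=4\log(1/\delta)$ and control the two pieces separately: a deterministic bound on the initial piece coming from the $L^\infty$ event, and a first-moment bound on the tail. The two resulting symmetric convex events are then combined with Royen's Gaussian correlation inequality.

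\emph{Initial piece.} On the event defining $G(\delta)$ we have $e^{-t}X_t^2\le\delta^2$ for every $t\ge0$. Integrating over $[0,T]$ gives
\[
    \int_0^{T}e^{-t}X_t^2\,dt\le T\delta^2=4\delta^2\log(1/\delta).
\]

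\emph{Tail piece.} By stationarity, $\E[X_t^2]=K(0)=1/2$. Hence
\[
    \E\Big[\int_T^\infty e^{-t}X_t^2\,dt\Big]=\tfrac12 e^{-T}=\tfrac12\delta^4.
\]
Markov's inequality then gives
\[
    \PP\Big(\int_T^\infty e^{-t}X_t^2\,dt\le 4\delta^2\log(1/\delta)\Big)\ge 1-\frac{\delta^2}{8\log(1/\delta)}\ge\frac12
\]
once $\delta<\delta_0$.

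\emph{Combining.} Let $E_1$ be the event defining $G(\delta)$ and $E_2$ the tail event above. Both are symmetric convex sets in the Gaussian path. Indeed, $E_1$ is an intersection of symmetric slabs, and $E_2$ is a sublevel set of a weighted $L^2$ seminorm. As the paper notes, we apply Royen's inequality to the finite-mesh and Riemann-sum versions of these events and pass to the limit. This gives
\[
    \PP(E_1\cap E_2)\ge \PP(E_1)\,\PP(E_2)\ge \tfrac12 G(\delta).
\]
On $E_1\cap E_2$ we have
\[
    \int_0^\infty e^{-t}X_t^2\,dt\le 8\delta^2\log(1/\delta)\le\big(4\delta\log(1/\delta)\big)^2,
\]
where the last inequality holds because $16\log^2(1/\delta)\ge 8\log(1/\delta)$ for small $\delta$. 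Therefore $H(4\delta\log(1/\delta))\ge G(\delta)/2$.

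There is no serious obstacle here. The only point requiring care is the justification of Royen's inequality for these infinite-dimensional events. This follows from the discretization remark in the paper together with monotone convergence, using the continuity of the sample paths of $X$.
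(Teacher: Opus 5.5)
Your proposal is correct and follows the paper's proof: split at $4\log(1/\delta)$, bound the initial piece deterministically on the $G(\delta)$ event, use Markov on the tail (whose mean is $\delta^4/2$), and combine the two symmetric convex events with Royen's inequality. The only cosmetic differences are that the paper applies the correlation inequality to the truncated $L^2$ event on $[0,4\log(1/\delta)]$ rather than to the sup event itself, and that it runs Markov at threshold $\delta^4$; neither changes the argument.
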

\begin{proof}
We truncate the weighted $L^2$ norm at time $4\log(1/\delta)$. On the event defining $G(\delta)$, the truncated part is automatically small, while the tail has expectation $\ll\delta^4$ and is therefore small with positive probability.

We may assume that $\delta$ is smaller than an absolute constant. If $\sup_{t\ge0} e^{-t/2}|X_t|\le\delta$, then
\[
    \int_0^{4\log(1/\delta)} e^{-t}|X_t|^2\,dt\le 4\delta^2\log(1/\delta).
\]
Also,
\[
    \E\Big[\int_{4\log(1/\delta)}^{\infty} e^{-t}|X_t|^2\,dt\Big]
    =\int_{4\log(1/\delta)}^{\infty} \frac{e^{-t}}{2}\,dt
    \le \delta^4/2.
\]
By Markov's inequality,
\[
    \PP\Big(\int_{4\log(1/\delta)}^{\infty} e^{-t}|X_t|^2\,dt\ge\delta^4\Big)\le\frac12.
\]
Therefore
\begin{align*}
\PP\Big(\int_0^{\infty} e^{-t}|X_t|^2\,dt\le 8\delta^2\log(1/\delta)\Big)
&\ge \PP\Big(\int_0^{4\log(1/\delta)} e^{-t}|X_t|^2\,dt\le 4\delta^2\log(1/\delta)\\
&\hspace{4.7em}\wedge \int_{4\log(1/\delta)}^{\infty} e^{-t}|X_t|^2\,dt\le 4\delta^2\log(1/\delta)\Big)\\
&\ge \PP\Big(\int_0^{4\log(1/\delta)} e^{-t}|X_t|^2\,dt\le 4\delta^2\log(1/\delta)\Big)\\
&\hspace{2em}\times \PP\Big(\int_{4\log(1/\delta)}^{\infty} e^{-t}|X_t|^2\,dt\le 4\delta^2\log(1/\delta)\Big)\\
&\ge \frac{G(\delta)}{2},
\end{align*}
where the third line uses the Gaussian correlation inequality. Since
\[
    8\delta^2\log(1/\delta)\le (4\delta\log(1/\delta))^2
\]
for all sufficiently small $\delta$, the left-hand side is at most $H(4\delta\log(1/\delta))$.
\end{proof}

The key estimate compares the $L^{\infty}$ and $L^2$ norms of the weighted process
\[
    \wt X_t=w(t)X_t,
\]
where $w$ is given by \cref{lem:2.2}. Since $\wt X_t$ vanishes outside $[-1/2,1/2]$, this is a local comparison.

\begin{lemma}\label{lem:2.4}
For all $L>0$, we have
\[
    \PP\Big(\max_{-1\le t\le1}|\wt X_t| \ge L\Big(\int_{-1}^1 |\wt X_t|^2\,dt\Big)^{1/2}\Big)
    \ll e^{-\Omega(L/\log^2(L+e))}.
\]
\end{lemma}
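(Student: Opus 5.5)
The plan is a spectral argument. First I compute the spectral density of $X$. Since $K(t)=\tfrac12\operatorname{sech}(t/2)$, we have $\widehat K(\xi)=\pi\operatorname{sech}(\pi\xi)$, so the spectral measure of $X$ decays like $e^{-\pi|\xi|}$. Write $X_t=\frac{1}{\sqrt{2\pi}}\int \widehat K(\xi)^{1/2}e^{it\xi}\,dW(\xi)$ for a suitable complex white noise $W$. Then $\wt X=wX$ has Fourier transform $\widehat{\wt X}(\eta)=\frac{1}{2\pi}\int \widehat w(\eta-\xi)\widehat K(\xi)^{1/2}\,dW(\xi)$, which is a Gaussian random function of $\eta$.

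By Plancherel, $\int_{-1}^1|\wt X_t|^2\,dt=\frac1{2\pi}\int|\widehat{\wt X}(\eta)|^2\,d\eta$, while $\max_t|\wt X_t|\le\frac1{2\pi}\int|\widehat{\wt X}(\eta)|\,d\eta$. Fix a frequency cutoff $R$, to be chosen with $R\asymp L$ up to log factors. We split $\int|\widehat{\wt X}|$ into $|\eta|\le R$ and $|\eta|>R$. On $|\eta|\le R$, Cauchy--Schwarz gives
\[
\int_{|\eta|\le R}|\widehat{\wt X}|\le \sqrt{2R}\,\Big(\int|\widehat{\wt X}|^2\Big)^{1/2}\ll \sqrt R\,\|\wt X\|_{L^2}.
\]
So if $\max|\wt X|\ge L\|\wt X\|_2$ with $\sqrt R\ll L/2$, the high-frequency tail $\int_{|\eta|>R}|\widehat{\wt X}|$ must be at least $\tfrac L2\|\wt X\|_2$. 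Since $R\asymp L^2$ would suffice for this step but is too costly below, I will use a weighted Cauchy--Schwarz instead. More simply, I bound the event by the union of two events: $\|\wt X\|_2\le\rho$, and $\int_{|\eta|>R}|\widehat{\wt X}|\ge L\rho/2$ (which is all that remains when $\|\wt X\|_2\ge\rho$, provided $\sqrt R\le L/2$).

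The high-frequency tail is controlled by decay. We have $\E|\widehat{\wt X}(\eta)|^2\ll\int|\widehat w(\eta-\xi)|^2e^{-\pi|\xi|}\,d\xi\ll e^{-\Omega(|\eta|/\log^2(|\eta|+e))}$ by \cref{lem:2.2}: split according to whether $|\xi|\ge|\eta|/2$ or $|\eta-\xi|\ge|\eta|/2$. Hence $\int_{|\eta|>R}|\widehat{\wt X}|$ is the norm of a Gaussian vector with $\E\ll e^{-cR/\log^2R}$. By Borell--TIS, or simply Markov applied to a Gaussian chaos, it exceeds $e^{-cR/(2\log^2R)}$ with probability $\le e^{-\Omega(R/\log^2 R)}$. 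The small-ball event $\|\wt X\|_2\le\rho$ needs to be exponentially unlikely. Since $\wt X$ is nondegenerate on $[-1/4,1/4]$, we have $\PP(\|\wt X\|_2\le\rho)\le\PP(|\int_{-1/4}^{1/4}X_t\,dt|\le\rho)\ll\rho$, as this integral is a single nondegenerate Gaussian.

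Now choose $R=L^2/4$ and $\rho=e^{-cR/(2\log^2R)}\cdot 2/L$. This gives failure probability $e^{-\Omega(L^2/\log^2 L)}$ plus $O(\rho)$, which is even better than claimed. If the $L^2$ constant is lossy, taking $R\asymp L$ with the weighted split still yields the stated $e^{-\Omega(L/\log^2(L+e))}$.

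The main obstacle is making the tail bound rigorous. This means justifying the white-noise spectral representation, and checking that the convolution estimate really inherits the almost-exponential decay of \cref{lem:2.2}. The small-ball step is the point where the argument needs exponential smallness but obtains only polynomial smallness in $\rho$; this is fine because $\rho$ itself is exponentially small in $L/\log^2L$.
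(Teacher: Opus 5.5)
Your argument is correct and is essentially the paper's proof. The paper expands $\wt X$ in a Fourier series on the unit torus with cutoff $M=\lfloor L\rfloor$ and uses anti-concentration of $a_0=\int \wt X_t\,dt$, while you use the continuous Fourier transform and anti-concentration of $\int_{-1/4}^{1/4}X_t\,dt$; the remaining structure is the same: Cauchy--Schwarz on the low frequencies, Markov on the high-frequency tail (whose decay comes from \cref{lem:2.2} and $\operatorname{sech}$), and a union bound.

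Your cutoff $R\asymp L^2$ is legitimate. A larger cutoff only shrinks the tail, and the sole constraint is $\sqrt{R/\pi}\le L/2$. So your remark that $R\asymp L^2$ is ``too costly'' and the closing hedge about $R\asymp L$ are unnecessary. This choice in fact yields the stronger bound $e^{-\Omega(L^2/\log^2 L)}$.
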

\begin{proof}
We may assume $L\ge10$, since the remaining values of $L$ can be absorbed into the implicit constant. Set $M=\lfloor L\rfloor$.

We expand $\wt X$ in a Fourier series and separate low and high frequencies. Since $\wt X_t$ vanishes at $\pm1/2$, we may identify it with its $1$-periodic extension and write, for $t\in[-1/2,1/2]$,
\[
    \wt X_t=\sum_{k\in\Z} a_k e^{2\pi i k t},
    \qquad
    a_k=\int_{-1/2}^{1/2} \wt X_t e^{-2\pi i k t}\,dt.
\]
Then
\begin{align*}
\E[|a_k|^2]
&= \int_{-1/2}^{1/2}\int_{-1/2}^{1/2} w(t)w(s)K(t-s)e^{-2\pi i k(t-s)}\,dt\,ds\\
&= \int_{\R}\int_{\R} w(t)w(s)K(t-s)e^{-2\pi i k(t-s)}\,dt\,ds\\
&= \frac{1}{2\pi}\int_{\R}\int_{\R}\int_{\R} w(t)w(s)\widehat K(\xi)e^{i(\xi-2\pi k)(t-s)}\,d\xi\,dt\,ds\\
&= \frac12\int_{\R} \operatorname{sech}(\pi\xi)\,|\widehat w(2\pi k-\xi)|^2\,d\xi.
\end{align*}
Since $\operatorname{sech}(\pi\xi)\ll e^{-\Omega(|\xi|)}$, splitting the integral into the regions $|\xi|\ge |k|$ and $|\xi|\le |k|$ gives
\[
    \E[|a_k|^2]\ll e^{-\Omega(|k|/\log^2(|k|+e))}.
\]
In particular, the high-frequency coefficients are extremely small on average. Moreover,
\[
    \max_{-1\le t\le1}|\wt X_t|\le \sum_k |a_k|,
    \qquad
    \int_{-1}^1 |\wt X_t|^2\,dt = \sum_k |a_k|^2.
\]
Thus, unless the high frequencies dominate the Fourier series, the supremum is controlled by the $L^2$ norm through the low modes alone.

If $\sum_k |a_k|\le2\sum_{|k|\le M}|a_k|$, then by Cauchy--Schwarz,
\begin{align*}
    \max_{-1\le t\le1}|\wt X_t|
    &\le 2\sum_{|k|\le M}|a_k| \\
    &\le 2\sqrt{2M+1}\Big(\sum_{|k|\le M}|a_k|^2\Big)^{1/2} \\
    &\le 2\sqrt{2M+1}\Big(\int_{-1}^1 |\wt X_t|^2\,dt\Big)^{1/2} \\
    &\le L\Big(\int_{-1}^1 |\wt X_t|^2\,dt\Big)^{1/2}.
\end{align*}
Thus it remains to bound the probability that the high frequencies dominate. Let $c>0$ be sufficiently small. Then
\[
    \PP\Big(\sum_{|k|>M}|a_k|\ge e^{-cM/\log^2(M+e)}\Big)
    \ll e^{-cM/\log^2(M+e)}.
\]
Consequently,
\begin{align*}
\PP\Big(\sum_k |a_k|\ge 2\sum_{|k|\le M}|a_k|\Big)
&\ll \PP\Big(\sum_{|k|\le M}|a_k|\le e^{-cM/\log^2(M+e)}\Big)+e^{-cM/\log^2(M+e)}\\
&\ll \PP\Big(|a_0|\le e^{-cM/\log^2(M+e)}\Big)+e^{-cM/\log^2(M+e)}\\
&\ll e^{-cM/\log^2(M+e)}
 \ll e^{-\Omega(L/\log^2(L+e))},
\end{align*}
since $a_0$ is a centered Gaussian with nonzero variance independent of $L$.
\end{proof}

We can now prove the reverse implication between $H(\delta)$ and $G(\delta)$.
\begin{lemma}\label{lem:2.5}
There exist absolute constants $C>0$ and $\delta_0\in(0,1)$ such that whenever $0<\delta<\delta_0$,
\[
    H(\delta)\,\exp\big(-O(\log^2(1/\delta))\big) \le G\big(C\delta\log^4(1/\delta)\big).
\]
\end{lemma}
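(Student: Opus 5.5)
We want to show that the $L^2$ event $E=\{\int_0^\infty e^{-t}X_t^2\,dt\le\delta^2\}$, intersected with a few events of probability $\exp(-O(\log^2(1/\delta)))$, forces $\sup_{t\ge0}e^{-t/2}|X_t|\le C\delta\log^4(1/\delta)$. The plan is to localize with translates of the cutoff $w$ from \cref{lem:2.2}, apply \cref{lem:2.4} on each unit window, and combine everything with Royen's inequality.

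\textbf{Near range.} Set $T=4\log(1/\delta)$ and cover $[0,T]$ by the centers $j/4$, $0\le j\le 4T$. Let $w_j(t)=w(t-j/4)$. Since $w=1$ on $[-1/4,1/4]$, the plateaus $[j/4-1/4,j/4+1/4]$ of the $w_j$ cover $[0,T]$. Let $L=\log^3(1/\delta)$ and define the good events
\[
\mathcal{A}_j=\Big\{\max_t|w_jX_t|\le L\Big(\int|w_jX_t|^2\,dt\Big)^{1/2}\Big\}.
\]
By stationarity and \cref{lem:2.4}, $\PP(\mathcal{A}_j^c)\ll e^{-\Omega(L/\log^2 L)}\le\delta^{10}$, so a union bound makes all $\mathcal{A}_j$ hold with probability $1-O(T\delta^{10})$.

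On $E\cap\bigcap_j\mathcal{A}_j$, fix $t$ in the plateau of $w_j$. The weight $e^{-s}$ varies by at most a factor $e$ on $\operatorname{supp} w_j$, so
\[
\int|w_jX|^2\le e^{j/4+1}\delta^2 .
\]
Hence
\[
e^{-t/2}|X_t|\le e^{-t/2}L\,e^{j/8+1/2}\delta\ll L\delta ,
\]
which is within the required $C\delta\log^4(1/\delta)$.

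\textbf{Main obstacle: the events $\mathcal{A}_j$ are not symmetric convex.} So we cannot multiply their probabilities against $\PP(E)$ directly; we only know $\PP(\bigcap_j\mathcal{A}_j^c\text{ fails})\le\delta^9$, which is negligible only if $H(\delta)\gg\delta^9$. Since $H(\delta)$ is of order $e^{-\Theta(\log^3(1/\delta))}$, this fails. The fix is to convert the $L^2$ bound into a convex $L^\infty$ bound on a set that does not depend on the events. Fourier-expand each $w_jX$ as in \cref{lem:2.4}:
\[
w_jX_t=\sum_k a_k^{(j)}e^{2\pi ik(t-j/4)} .
\]
Then
\[
\max_t|w_jX|\le\sum_{|k|\le L}|a^{(j)}_k|+\sum_{|k|>L}|a^{(j)}_k| .
\]
The first sum is at most $\sqrt{2L+1}\,(\int|w_jX|^2)^{1/2}$ by Cauchy--Schwarz. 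This holds deterministically, so no probabilistic event is needed for the low modes. For the tail, consider the symmetric convex event
\[
\mathcal{B}=\bigcap_j\Big\{\sum_{|k|>L}|a_k^{(j)}|\le\delta\Big\}.
\]
Each $\E|a_k^{(j)}|\ll e^{-\Omega(|k|/\log^2|k|)}$, so every term is $\le\delta$ in expectation once $L\gg\log(1/\delta)\log^2\log(1/\delta)$. Markov's inequality and \v{S}id\'ak/Royen then give $\PP(\mathcal{B})\ge2^{-O(T)}$. Royen's inequality yields $\PP(E\cap\mathcal{B})\ge H(\delta)2^{-O(T)}$, and on $E\cap\mathcal{B}$ the near-range bound holds with $\sqrt L\,\delta\ll\delta\log^{2}(1/\delta)$.

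\textbf{Far range.} For $t\ge T$, impose $D=\{\sup_{t\ge T}e^{-t/2}|X_t|\le\delta\}$. Since $e^{-T/2}=\delta^2$, $D$ contains $\{\sup_{t\ge T}e^{-(t-T)/2}|X_t|\le\delta^{-1}\}$. By stationarity this is $G(\delta^{-1})\ge G(1)$, a positive constant, because $\sup_t e^{-t/2}|X_t|<\infty$ almost surely with a Gaussian tail. A final application of Royen's inequality to the convex events $E$, $\mathcal{B}$, $D$ gives
\[
G(C\delta\log^4(1/\delta))\ge H(\delta)\,e^{-O(\log(1/\delta))}G(1),
\]
which is stronger than the claim. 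Mesh and Riemann-sum approximations justify applying the inequalities to these infinite-dimensional events, as noted in the paper.
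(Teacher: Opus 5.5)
Your argument is correct in substance, and it takes a genuinely different route from the paper's.

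\textbf{How the paper argues.} The paper uses \cref{lem:2.4} as a black box with $L=\log^4(1/\delta)$. Its failure event is not convex, so the paper simply subtracts it: $\PP(A_L\cap B)\ge\PP(B)-\exp(-\Omega(\log^4(1/\delta)/(\log\log(1/\delta))^2))$. It then makes this subtraction harmless by separately proving the a priori bound $H(\delta)\ge\exp(-O(\log^3(1/\delta)))$. The range $t\ge100\log(1/\delta)$ is handled by a second pass through \cref{lem:2.4} together with a union bound.

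\textbf{How your route differs.} You open up the proof of \cref{lem:2.4} instead. The low-frequency part is controlled deterministically by Cauchy--Schwarz. The only probabilistic input is smallness of $\sum_{|k|>L}|a_k^{(j)}|$, which is a sublevel set of a seminorm and hence symmetric convex. So it can be intersected with $E$ multiplicatively via Royen's inequality. What this buys:
\begin{itemize}
\item You no longer need the a priori lower bound on $H$.
\item $L$ can be as small as $C\log(1/\delta)(\log\log(1/\delta))^2$, so the threshold improves from $\delta\log^4(1/\delta)$ to about $\delta\log^{1/2}(1/\delta)\log\log(1/\delta)$.
\item Your far-range step is simpler. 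By rescaling and stationarity, $\PP(D)=G(\delta^{-1})$, and this already tends to $1$ because the supremum is a.s.\ finite; you do not need to argue $G(1)>0$.
\end{itemize}
For \cref{thm:2.7} the two thresholds are interchangeable, since either one shifts $\log(1/\delta)$ by only $O(\log\log(1/\delta))$.

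\textbf{One step needs repair.} For $j=0,1$ the support of $w_j$ reaches into $[-1/2,0)$. There $E=\{\int_0^\infty e^{-t}X_t^2\,dt\le\delta^2\}$ gives no information, so $\int|w_jX|^2\le e^{j/4+1}\delta^2$ does not follow. You cannot avoid this by shifting the windows to the right: any window whose plateau contains $t=0$ must have support extending past $0$.

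The fix is exactly the paper's first step. Intersect additionally with the symmetric convex event $\{\int_{-1/2}^0X_t^2\,dt\le\delta^2\}$. By stationarity and the Taylor-coefficient argument at the end of \cref{lem:2.1}, its probability is $\exp(-O(\log^2(1/\delta)))$. With this event included, your final bound becomes $H(\delta)\exp(-O(\log^2(1/\delta)))$, which is precisely what the lemma claims. Your remark that you obtain the stronger loss $e^{-O(\log(1/\delta))}$ is therefore not justified as written.
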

\begin{proof}
We first enlarge the integration range slightly to the left; by the argument at the end of \cref{lem:2.1}, this costs only an $\exp(-O(\log^2(1/\delta)))$ factor. More precisely, the Gaussian correlation inequality gives
\begin{align*}
\PP\Big(\int_{-1}^{\infty} e^{-t}X_t^2\,dt\le2\delta^2\Big)
&\ge \PP\Big(\int_{-1}^0 e^{-t}X_t^2\,dt\le\delta^2\Big)
    \PP\Big(\int_0^{\infty} e^{-t}X_t^2\,dt\le\delta^2\Big)\\
&\ge \exp\big(-O(\log^2(1/\delta))\big)H(\delta).
\end{align*}
The rest of the proof is devoted to showing that, on this event, the process is very unlikely to develop a narrow spike.
Let $C>0$ be a sufficiently large absolute constant.
Set
\[
    L=\log^4(1/\delta).
\]
To control the interval $[0,L]$, note that the event
\[
    \int_{-1}^{\infty} e^{-t}X_t^2\,dt\le2\delta^2
\]
implies that for every integer $k\ge0$,
\[
    \int_{\R} e^{-t}w(t-k/4)^2|X_t|^2\,dt\le2\delta^2.
\]
Since $w(t-k/4)=1$ whenever $|t-k/4|\le1/4$, applying \cref{lem:2.4} with parameter $L$ for each integer $k\le4L$ and using stationarity gives
\begin{align*}
\PP\Big(\sup_{0\le t\le L} e^{-t/2}|X_t|\le C\delta\log^4(1/\delta)
    \ \wedge\ 
    \int_{-1}^{\infty} e^{-t}X_t^2\,dt\le2\delta^2\Big)
&\ge \PP\Big(\int_{-1}^{\infty} e^{-t}X_t^2\,dt\le2\delta^2\Big) \\
&\qquad - \exp\Big(-\Omega\Big(\frac{\log^4(1/\delta)}{\log^2\log(1/\delta)}\Big)\Big).
\end{align*}

It remains to control the tail. For $\ell\ge0$ we have
\[
    \E\int_{\R} w(t-\ell)^2e^{-t}|X_t|^2\,dt\ll e^{-\ell}.
\]
Applying \cref{lem:2.4} once more,
\[
    \PP\Big(\sup_t w(t-\ell)e^{-t/2}|X_t|\ge e^{-\ell/4}\Big)
    \ll \PP\Big(\int_{\R} w(t-\ell)^2e^{-t}|X_t|^2\,dt\ge e^{-3\ell/4}\Big)+e^{-e^{\Omega(\ell)}}
    \ll e^{-\ell/4}.
\]
Taking $\ell=k/4$ for integers $k\ge100\log(1/\delta)$ and using a union bound, we obtain
\[
    \PP\Big(\sup_{t\ge100\log(1/\delta)} e^{-t/2}|X_t|\le C\delta\log^4(1/\delta)\Big)\ge\frac12
\]
for all sufficiently small $\delta$. Applying the Gaussian correlation inequality once more and combining this with the previous estimate on $[0,L]$, we obtain
\[
    A=\Big\{\sup_{t\ge0} e^{-t/2}|X_t|\le C\delta\log^4(1/\delta)\Big\},
    \qquad
    A_L=\Big\{\sup_{0\le t\le L} e^{-t/2}|X_t|\le C\delta\log^4(1/\delta)\Big\},
\]
and
\[
    B=\Big\{\int_{-1}^{\infty} e^{-t}X_t^2\,dt\le2\delta^2\Big\}.
\]
\begin{align*}
\PP(A\cap B)
&\ge \frac12\PP(A_L\cap B) \\
&\ge \frac12\PP(B) - \exp\Big(-\Omega\Big(\frac{\log^4(1/\delta)}{\log^2\log(1/\delta)}\Big)\Big) \\
&\ge \exp\big(-O(\log^2(1/\delta))\big)H(\delta) \\
&\qquad - \exp\Big(-\Omega\Big(\frac{\log^4(1/\delta)}{\log^2\log(1/\delta)}\Big)\Big).
\end{align*}
To remove the additive error term, it suffices to note that
\[
    H(\delta)\ge \exp(-O(\log^3(1/\delta))).
\]
Indeed, for sufficiently small $\delta$ the event
\[
    \sup_{0\le t\le10\log(1/\delta)} |X_t|\le\delta^2
\]
already implies
\[
    \int_0^{10\log(1/\delta)} e^{-t}|X_t|^2\,dt\le\delta^2/2,
\]
so
\begin{align*}
H(\delta)
&\ge \PP\Big(\sup_{0\le t\le10\log(1/\delta)} |X_t|\le\delta^2\Big)
    \PP\Big(\int_{10\log(1/\delta)}^{\infty} e^{-t}|X_t|^2\,dt\le\delta^2/2\Big) \\
&\ge \frac12\Big(\PP\big(\sup_{0\le t\le1}|X_t|\le\delta^2\big)\Big)^{\lceil 10\log(1/\delta)\rceil} \\
&\ge \exp\big(-O(\log^3(1/\delta))\big),
\end{align*}
where the tail term is handled by Markov's inequality, and the first factor is estimated by decomposing $[0,10\log(1/\delta)]$ into unit intervals and using the argument from the end of \cref{lem:2.1}. This completes the proof.
\end{proof}

\subsection{\texorpdfstring{Completing the proof of \cref{thm:1.2}}{Completing the proof of Theorem 1.2}}\label{subsec:2.3}
Since $H(\delta)$ is an $L^2$ quantity, it can be analyzed by spectral methods. In particular, \cite[Lemma~4]{NP23} implies that
\begin{equation} \label{eq:2.5}
    \log \PP\Big(\int_0^{\infty} e^{-t}X_t^2\,dt<\delta\Big)
    = -\frac{1}{12\pi^2}\log^3(1/\delta)+o\big(\log^3(1/\delta)\big).
\end{equation}
For completeness, we prove \cref{eq:2.5} in \cref{app:A} with a quantitative error term.

\begin{lemma} \label{lem:2.6}
Let $X$ be as in \cref{eq:2.2}. Then for all $\delta\in(0,1/4)$,
\begin{equation} \label{eq:2.6}
    \log \PP\Big(\int_0^{\infty} e^{-t}X_t^2\,dt<\delta\Big)
    = -\frac{1}{12\pi^2}\log^3(1/\delta)
    + O\big(\log^{5/2}(1/\delta)\sqrt{\log\log(1/\delta)}\big).
\end{equation}
\end{lemma}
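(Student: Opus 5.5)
The idea is to view the quadratic functional as a diagonal Gaussian form and reduce everything to counting eigenvalues. Write
\[
    Q=\int_0^{\infty}e^{-t}X_t^2\,dt=\sum_{j\ge1}\lambda_j\xi_j^2,
\]
where $\xi_j$ are i.i.d.\ standard Gaussians. Here $\lambda_1\ge\lambda_2\ge\cdots$ are the eigenvalues of the trace-class operator
\[
    (Tf)(t)=\int_0^\infty e^{-t/2}K(t-s)e^{-s/2}f(s)\,ds
\]
on $L^2(0,\infty)$, and $\operatorname{tr}T=\int_0^\infty e^{-t}/2\,dt<\infty$. Set $\Lambda=\log(1/\delta)$ and $N(u)=\#\{j:\lambda_j>u\}$.

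\textbf{Step 1: small-ball bound from the counting function.} I will first prove a Sytaya-type estimate
\[
    \log\PP(Q<\delta)=-\tfrac12\sum_{\lambda_j>\delta}\log(\lambda_j/\delta)+O(\text{error}),
\]
with the error controlled in terms of $N$ near the threshold. For the upper bound I use the exponential Chebyshev inequality:
\[
    \PP(Q<\delta)\le e^{\theta\delta}\prod_j(1+2\theta\lambda_j)^{-1/2},\qquad \theta\approx\delta^{-1}\Lambda^{C}.
\]
For the lower bound I use \v{S}id\'ak/Royen to factor the event into two pieces. The head piece is $\{\lambda_j\xi_j^2\le\delta/(2N)$ for all $\lambda_j>\delta/\Lambda^{C}\}$, whose probability is an explicit product. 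The tail piece is controlled by Markov, since the tail has mean $\sum_{\lambda_j\le\delta\Lambda^{-C}}\lambda_j$. Both sides reduce to
\[
    \tfrac12\int_{\delta}^{\infty}N(u)\,\frac{du}{u}+O\big(N(\delta\Lambda^{-C})\log\Lambda+\cdots\big).
\]

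\textbf{Step 2: eigenvalue counting via phase space.} The heuristic comes from the spectral density $\widehat K(\xi)=\pi\operatorname{sech}(\pi\xi)\approx e^{-\pi|\xi|}$. A mode at position $t$ and frequency $\xi$ contributes roughly $e^{-t-\pi|\xi|}$. So $N(u)$ should be the phase-space area of $\{t\ge0:\ t+\pi|\xi|\le\log(1/u)\}$ divided by $2\pi$, which equals $\log^2(1/u)/(2\pi^2)$. Substituting $v=\log(1/u)$ then gives
\[
    \tfrac12\int_0^\Lambda \frac{v^2}{2\pi^2}\,dv=\frac{\Lambda^3}{12\pi^2},
\]
which is exactly the claimed constant. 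To make this rigorous I will cut $[0,\Lambda]$ into blocks $I_a=[a,a+\ell]$ of length $\ell\asymp\sqrt{\Lambda}$, and discard $t>C\Lambda$ using the trace.

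The key link is bracketing via min-max. On each block the weight $e^{-t}$ lies between $e^{-a-\ell}$ and $e^{-a}$. Decoupling the blocks, via $T\le\sum_a T_a$-type operator inequalities and their reverse, changes the counts by at most the number of blocks times the per-block boundary error. On each block I need a local count
\[
    \#\{\text{eigenvalues of }K\text{ on an interval of length }\ell\text{ exceeding }\eta\}=\frac{\ell\log(1/\eta)}{\pi^2}+O(\log(1/\eta)+1).
\]
I will prove this with a smoothly windowed Fourier basis: upper count from the decay of $\widehat K$ and $\widehat w$ (as in \cref{lem:2.2} and \cref{lem:2.4}); lower count from a test subspace of localized exponentials. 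Summing the local counts over $\Lambda/\ell$ blocks, the per-block error $O(\Lambda)$ contributes $O(\Lambda^2/\ell)$, while replacing $e^{-t}$ by a constant on each block shifts $\log(1/\eta)$ by $O(\ell)$ and contributes $O(\Lambda\ell)$. Both are $O(\Lambda^{3/2})$ when $\ell=\sqrt\Lambda$.

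\textbf{Main obstacle.} The hard part is the uniform local count: uniform in $\eta$ down to $e^{-O(\Lambda)}$ and with only $O(\log(1/\eta))$ additive error. This is a Landau--Widom type statement for an analytic kernel with exponentially decaying spectrum. I would attack it by transporting $K$ to the circle, using the periodization trick from \cref{lem:2.4}, where the eigenvalues are almost $\widehat K(2\pi k/\ell)$, and then control the windowing commutator using the near-exponential decay of $\widehat w$. The $\log^2$ loss in the decay of $\widehat w$ is what forces an extra $\sqrt{\log\Lambda}$ factor. Feeding $N(u)=\log^2(1/u)/(2\pi^2)+O(\Lambda^{3/2}\sqrt{\log\Lambda})$ into Step 1 and integrating over a range of length $\Lambda$ gives the error $O(\Lambda^{5/2}\sqrt{\log\Lambda})$ in \cref{eq:2.6}.
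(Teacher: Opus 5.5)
Your Step~1 is sound. It is equivalent to \cref{prop:A.5}: your head/tail lower bound works because the two pieces involve disjoint independent $\xi_j$, and it costs only $O(\Lambda^2\log\Lambda)$. The phase-space constant and the block architecture of Step~2 also match the paper's Laptev-type argument. The gap is that the two estimates carrying Step~2 are asserted rather than proved.

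The first is the decoupling of the blocks. The upper direction is soft: for positive $T$ and orthogonal projections $P_1,\dots,P_m$ summing to the identity, $T\le m\sum_a P_aTP_a$. The resulting $\log m$ shift of the threshold costs only $O(\Lambda\log\Lambda)$ overall. But there is no reverse inequality: for a positive block operator the eigenvalue count can drop by as much as the smaller block count (e.g.\ $\bigl(\begin{smallmatrix} I & I\\ I & I\end{smallmatrix}\bigr)$). Here the cross-block kernel $e^{-(s+t)/2}K(s-t)=(e^s+e^t)^{-1}\approx e^{-\max(s,t)}$ lies far above the threshold $e^{-\Lambda}$ for most pairs of blocks. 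So the interaction is not small in norm, and the lower bound survives only because the interaction has small effective rank. The paper proves exactly this. After the change of variables in \cref{prop:A.4}, $P_j\widetilde TQ_j$ has kernel $e^{-2ja}\sqrt{2v}/(1+uv)$ with $uv\in[0,1]$. Truncating the Taylor series of $(1+z)^{-1}$ about $z=1/2$ gives $s_{k+2}(P_j\widetilde TQ_j)\ll\sqrt a\,e^{-2ja}3^{-k}$, so only $O(\log(e+m\sqrt a/\tau))$ singular values per block exceed $\tau/(8m)$. Ky Fan's inequality then gives \eqref{eq:A.6}--\eqref{eq:A.7}. Some such low-rank input is indispensable for the lower bound on the counting function, and hence for the lower bound in \eqref{eq:2.6}.

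The second is the local count, which you rightly flag as the main obstacle but leave open; the windowing route you propose does not give the stated error. To keep high Fourier modes from leaking into the frequencies where $\widehat K(\xi)=\pi\operatorname{sech}(\pi\xi)\approx 2\pi e^{-\pi|\xi|}$ exceeds $\eta$, you need $|\widehat w(\zeta)|^2$ to decay faster than $e^{-\pi|\zeta|}$ out to $|\zeta|\asymp\Lambda$. Compare \cref{lem:2.4}, where the bound $\E|a_k|^2\ll e^{-\Omega(|k|/\log^2(|k|+e))}$ is governed by $\widehat w$, not by $\widehat K$. With a window built as in \cref{lem:2.2} with transition length $s$, this forces $s\gtrsim\log^2\Lambda$, and the extra length $2s$ adds $\asymp s\log(1/\eta)$ eigenvalues per block. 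Optimizing $\ell$ then gives a counting error of order $\Lambda^{3/2}\log\Lambda$ and a final error $\log^{5/2}(1/\delta)\log\log(1/\delta)$, a factor $\sqrt{\log\log(1/\delta)}$ worse than \eqref{eq:2.6}. Your bookkeeping is also inconsistent: a per-block error $O(\log(1/\eta)+1)$ with $\ell=\sqrt\Lambda$ would give $\Lambda^{3/2}$ with no logarithm at all. Moreover, the per-block main term in fact carries an extra $\ell\log(2\pi)/\pi^2$. The paper avoids windows. It sandwiches $\pi\operatorname{sech}(\pi u)\mathcal G_{u,a}\le\mathcal K_a\le\pi\mathcal G_{u,a}+\pi\operatorname{sech}(\pi u)\Id$ with $u\approx\pi^{-1}\log(1/\tau)$, and imports the Karnik--Romberg--Davenport plunge-region bounds for the prolate operator (\cref{lem:A.2,lem:A.3}). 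These give per-block error $O(a+\log(1/\tau)\log(a\log(1/\tau)+e))$, and the choice $a\asymp\sqrt{\log(1/\tau)\log\log(1/\tau)}$ is what produces the $\sqrt{\log\log}$ in \eqref{eq:2.6}.
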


We now record the corresponding quantitative version of \cref{thm:1.2}.
\begin{theorem}\label{thm:2.7}
For $\delta\in(0,1/4)$,
\[
    \log F(\delta)
    = -\frac{2}{3\pi^2}\log^3(1/\delta)
    + O\big(\log^{5/2}(1/\delta)\sqrt{\log\log(1/\delta)}\big).
\]
\end{theorem}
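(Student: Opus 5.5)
Write $\ell=\log(1/\delta)$ and $E=O(\ell^{5/2}\sqrt{\log\ell})$ for the error term. The plan is to chain the comparisons already established: \cref{lem:2.1} relates $F$ and $G$, \cref{lem:2.3} and \cref{lem:2.5} relate $G$ and $H$, and \cref{lem:2.6} gives $H$ sharply.

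First translate \cref{lem:2.6} into an asymptotic for $H$. Since $H(\delta)=\PP(\int_0^\infty e^{-t}X_t^2\,dt\le\delta^2)$, applying \cref{eq:2.6} with $\delta^2$ in place of $\delta$ gives
\[
\log H(\delta)=-\frac{1}{12\pi^2}(2\ell)^3+E=-\frac{2}{3\pi^2}\ell^3+E.
\]
The strict versus non-strict inequality is harmless, by monotonicity after a negligible perturbation. The key robustness fact is this: if $\delta'$ satisfies $\log(1/\delta')=\ell+O(\log\ell)$, then $\log^3(1/\delta')=\ell^3+O(\ell^2\log\ell)$. This is absorbed into $E$. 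Hence all polylogarithmic rescalings of $\delta$ in the lemmas cost nothing.

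For the upper bound, combine the first part of \cref{lem:2.1} with \cref{lem:2.3}:
\[
F(\delta)\ge e^{-O(\ell^2)}G(\delta)
\]
is the wrong direction, so instead use the second part of \cref{lem:2.1} at $2\delta$. This gives
\[
F(\delta)\ll G(2\delta)\,G(1)^{-1}\,e^{O(\ell^2\log\ell)}.
\]
Note that $G(1)>0$ is an absolute constant; positivity follows, for instance, from \cref{lem:2.5} applied at a fixed small $\delta$. Then \cref{lem:2.3} gives
\[
G(2\delta)\le 2H(8\delta\log(1/(2\delta))),
\]
so
\[
\log F(\delta)\le -\frac{2}{3\pi^2}\ell^3+E.
\]

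For the lower bound, use the first part of \cref{lem:2.1}:
\[
F(\delta)\ge e^{-O(\ell^2)}G(\delta).
\]
Then invoke \cref{lem:2.5} with $\theta=\delta/(C'\log^4(1/\delta))$, so that $C\theta\log^4(1/\theta)\le\delta$ for small $\delta$. By monotonicity,
\[
G(\delta)\ge H(\theta)e^{-O(\ell^2)}.
\]
Since $\log(1/\theta)=\ell+O(\log\ell)$, this yields
\[
\log F(\delta)\ge -\frac{2}{3\pi^2}\ell^3+E.
\]
For $\delta$ bounded away from $0$ in $(0,1/4)$, i.e.\ $\delta\ge\delta_0$, both sides are $O(1)$, given that $F(\delta)\ge F(\delta_0)>0$; so the estimate is trivial there after enlarging constants.

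There is no real obstacle, since the work is in the lemmas. The only points needing care are bookkeeping ones. One is the factor $2$ in passing from $\delta^2$ in $H$ to the $\log^3$ constant, which takes $1/(12\pi^2)\cdot 8=2/(3\pi^2)$. The other is checking that every loss — $\exp(O(\ell^2\log\ell))$ factors and $\log$-power rescalings of the argument — is dominated by $\ell^{5/2}\sqrt{\log\ell}$.
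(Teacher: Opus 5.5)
Your proposal is correct and follows the paper's proof essentially step for step. You sandwich $G$ between values of $H$ at polylogarithmically rescaled arguments via \cref{lem:2.3} and \cref{lem:2.5}, read off $H$ from \cref{lem:2.6} at $\delta^2$, and transfer to $F$ through both halves of \cref{lem:2.1}; you apply the second half at $2\delta$, where the paper applies it at $\delta$ and then replaces $\delta/2$ by $\delta$, which is the same thing. Your remarks on $G(1)>0$ and on $\delta$ bounded away from $0$ fill in points the paper leaves implicit.
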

\begin{proof}
The comparison lemmas sandwich $G(\delta)$ between $H$ evaluated at two nearby polylogarithmic scales, so we begin by recording those two scales explicitly.
Let $C>0$ be the absolute constant from \cref{lem:2.5}.

Set
\[
    \theta_\delta^+=4\delta\log(1/\delta),
    \qquad
    \theta_\delta^- = \frac{\delta}{4C\log^4(1/\delta)}.
\]
By \cref{lem:2.3},
\[
    G(\delta)\le2H(\theta_\delta^+).
\]
Also, for all sufficiently small $\delta$,
\[
    C\theta_\delta^-\log^4(1/\theta_\delta^-)\le\delta,
\]
so monotonicity together with \cref{lem:2.5} gives
\[
    H(\theta_\delta^-)\exp\big(-O(\log^2(1/\delta))\big)\ll G(\delta).
\]
Now \cref{lem:2.6} implies that for $x\in(0,1/4)$,
\[
    \log H(x)
    = -\frac{2}{3\pi^2}\log^3(1/x)
    + O\big(\log^{5/2}(1/x)\sqrt{\log\log(1/x)}\big).
\]
Since $\log(1/\theta_\delta^{\pm})=\log(1/\delta)+O(\log\log(1/\delta))$, the previous two displays yield
\[
    \log G(\delta)
    = -\frac{2}{3\pi^2}\log^3(1/\delta)
    + O\big(\log^{5/2}(1/\delta)\sqrt{\log\log(1/\delta)}\big).
\]
This identifies the asymptotic behavior of $G(\delta)$. To pass back to $F(\delta)$, we appeal once more to \cref{lem:2.1}; after absorbing the fixed constant $G(1)$ into the error term, it gives
\[
    \log F(\delta)\ge \log G(\delta)-O(\log^2(1/\delta))
\]
and
\[
    \log F(\delta/2)\le \log G(\delta)+O(\log^2(1/\delta)\log\log(1/\delta)).
\]
Replacing $\delta/2$ by $\delta$ changes the cubic main term by $O(\log^2(1/\delta))$, which is absorbed by the stated error term. This completes the proof.
\end{proof}
\section[A reduction of Theorem~1.1 to the Gaussian model]{A reduction of \texorpdfstring{\cref{thm:1.1}}{Theorem~1.1} to the Gaussian model} \label{sec:3}
The purpose of this section is to connect $f_n(x)$ with the Gaussian process studied in \cref{sec:2}. Throughout this section, let $(\varepsilon_k)_{k\ge0}$ denote a sequence of independent Rademacher random variables on a probability space $(\Omega,\mc F,\PP)$. We use the logarithmic coordinate $x=\pm e^{-t/n}$, which parametrizes all of $[-1,1]$ except for the single point $0$.

\begin{lemma} \label{lem:3.1}
For $t\ge0$, define
\begin{equation} \label{eq:3.1}
    f_{n,t}^{+}=\frac{1}{\sqrt n}f_n(e^{-t/n}),
    \qquad
    f_{n,t}^{-}=\frac{1}{\sqrt n}f_n(-e^{-t/n}).
\end{equation}
Then
\[
    \snorm{f_n}_\infty
    = \max\Big\{1,\ \sqrt n\sup_{t\ge0}|f_{n,t}^{+}|,\ \sqrt n\sup_{t\ge0}|f_{n,t}^{-}|\Big\}.
\]
\end{lemma}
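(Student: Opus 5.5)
The identity is deterministic, so the plan is a direct case analysis of where $\sup_{x\in[-1,1]}|f_n(x)|$ is attained. The map $t\mapsto e^{-t/n}$ is a bijection from $[0,\infty)$ onto $(0,1]$, and $t\mapsto -e^{-t/n}$ is a bijection onto $[-1,0)$. Together with the point $0$ these cover $[-1,1]$. Since $\sqrt n f^{\pm}_{n,t}=f_n(\pm e^{-t/n})$, we get
\[
    \snorm{f_n}_\infty=\max\Big\{|f_n(0)|,\ \sup_{x\in(0,1]}|f_n(x)|,\ \sup_{x\in[-1,0)}|f_n(x)|\Big\}
    =\max\Big\{|\varepsilon_0|,\ \sqrt n\sup_{t\ge0}|f^{+}_{n,t}|,\ \sqrt n\sup_{t\ge0}|f^{-}_{n,t}|\Big\}.
\]
Here $|f_n(0)|=|\varepsilon_0|=1$ because the coefficients are $\pm1$. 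The maximum over $[-1,1]$ is attained by continuity. The suprema over the half-open intervals are suprema and need not be attained, but that is harmless since we only take a maximum of three numbers.

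The only point to check is that writing $\max$ over these three quantities is legitimate. In particular, if the true maximum over $[-1,1]$ is achieved only at $x=0$, the value $1$ is accounted for by the first entry. Conversely, every entry is at most $\snorm{f_n}_\infty$. We do not need the observation that $\sup_{t\ge0}|f^{\pm}_{n,t}|$ is at least its limit $n^{-1/2}|\varepsilon_0|$ as $t\to\infty$, since the constant $1$ is included explicitly.

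There is no real obstacle here. The step requiring the most care is making sure that the boundary point $x=0$ is handled, via the constant $1$, and that the reparametrizations are genuinely surjective onto $(0,1]$ and $[-1,0)$. Both follow from the monotonicity and continuity of $t\mapsto e^{-t/n}$ together with $e^{-t/n}\to0$ as $t\to\infty$.
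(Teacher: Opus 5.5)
Your proof is correct and is essentially the paper's argument: the substitutions $x=\pm e^{-t/n}$ parametrize $(0,1]$ and $[-1,0)$ bijectively as $t$ ranges over $[0,\infty)$, and the remaining point $x=0$ contributes $|f_n(0)|=|\varepsilon_0|=1$. Your additional remarks about suprema versus maxima are harmless but not needed.
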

\begin{proof}
If $x\in(0,1]$, then $x=e^{-t/n}$ for $t=-n\log x\ge0$; if $x\in[-1,0)$, then $x=-e^{-t/n}$ for $t=-n\log|x|\ge0$; and $|f_n(0)|=1$. The claim follows.
\end{proof}

We now compare the profiles from \cref{eq:3.1} with the Gaussian process $Y$ from \cref{eq:2.1}. The key input is the Koml\'os--Major--Tusn\'ady (KMT) strong invariance principle for i.i.d.\ variables with finite exponential moments \cite[Theorem~1]{KMT75,KMT76}; see also \cite[Theorem~1.1]{Chatterjee12} for an alternative proof. The crucial point is that the KMT theorem allows us to replace the random walk $\varepsilon_1,\varepsilon_1+\varepsilon_2,\ldots$ by a suitable rescaling of Brownian motion, with negligible error. An integration-by-parts argument then matches $f_n(x)$ with the corresponding Gaussian model.

\begin{lemma}\label{lem:3.2}
For every $A\ge1$ there exists $C_A>0$ such that the following holds. Let $(\varepsilon_k)_{k\ge0}$ be a sequence of independent Rademacher random variables. One can couple standard Brownian motion $(B_s)_{s\ge0}$ with $(\varepsilon_k)_{k\ge0}$ so that
\[
    \PP\Big(\max_{0\le k\le n}\Big|\sum_{j=0}^k\varepsilon_j-B_k\Big|>C_A\log n\Big)
    \le C_A n^{-A}
\]
for all $n\ge2$.
\end{lemma}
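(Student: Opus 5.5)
This is essentially a direct application of the KMT strong approximation theorem \cite[Theorem~1]{KMT75,KMT76}. The plan is to invoke it once for a single infinite sequence, so that the same coupling works for every $n$ simultaneously, and then pass from an exponential tail bound to the stated polynomial bound.

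\emph{Step 1: the KMT input.} The variables $\varepsilon_k$ are i.i.d. and centered, with variance $1$ and a finite moment generating function (indeed bounded), so the hypotheses of KMT hold. KMT then gives a probability space carrying an i.i.d. Rademacher sequence $(\varepsilon_k')_{k\ge0}$ and a standard Brownian motion $B$ with
\[
    \PP\Big(\max_{0\le k\le n}\Big|\sum_{j=0}^k\varepsilon_j'-B_{k+1}\Big|>C\log n+x\Big)\le Ke^{-\lambda x}
\]
for all $n\ge 1$ and $x>0$. Here $C,K,\lambda$ are absolute constants, since the law of $\varepsilon_0$ is fixed. The coupling is built once for the infinite sequence, so this bound holds simultaneously in $n$.

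\emph{Step 2: the time-index shift.} The partial sum $\sum_{j=0}^k$ contains $k+1$ terms, whereas the lemma compares it with $B_k$. There are two ways to handle this.

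One option is to re-index the KMT statement. The other is to keep $B_{k+1}$ and absorb the discrepancy $\max_{0\le k\le n}|B_{k+1}-B_k|$. This maximum is a maximum of $n+1$ standard Gaussians, so
\[
    \PP\Big(\max_{0\le k\le n}|B_{k+1}-B_k|>\sqrt{2(A+2)\log n}\Big)\ll n^{-A}.
\]
Since $\sqrt{\log n}\ll\log n$, this error is absorbed into $C_A\log n$. Alternatively, one can define $B'_s=B_{s+1}-B_1$, which is again a standard Brownian motion, and pay only the extra $|B_1|$, which is handled the same way.

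\emph{Step 3: converting to a polynomial bound.} Take $x=(A/\lambda)\log n$, so that $Ke^{-\lambda x}=Kn^{-A}$. Then set
\[
    C_A = C + A/\lambda + \sqrt{2(A+2)} + K + \text{const},
\]
which is large enough to cover both the KMT error and the Gaussian shift error. A union of the two bad events then gives a total probability at most $C_A n^{-A}$ for all $n\ge2$.

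\emph{Step 4: transfer to the original sequence.} Finally, transfer the coupling to the given sequence $(\varepsilon_k)$. Because $(\varepsilon_k')$ and $(\varepsilon_k)$ have the same law, we can glue $B$ onto the original probability space. The standard way is to realize $B$ through the conditional law of $B$ given $(\varepsilon_k')$, using an independent uniform variable; if necessary, extend the space to carry that auxiliary randomness.

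The only point requiring care is the index convention in Step 2, together with checking that the KMT constants are uniform in $n$. I do not expect a genuine obstacle.
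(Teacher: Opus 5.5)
Your proposal is correct and takes the same route as the paper. The paper gives no separate argument and simply treats \cref{lem:3.2} as an instance of the KMT theorem \cite[Theorem~1]{KMT75,KMT76}; your added bookkeeping is all sound, namely absorbing the off-by-one index through a single Gaussian increment (or $|B_1|$), taking $x=(A/\lambda)\log n$ to convert the exponential tail into $n^{-A}$, and transferring the coupling to the given sequence via the conditional law of $B$.
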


We now use \cref{lem:3.2} to couple $f_n$ with two independent copies of the Gaussian process $Y$.
\begin{lemma}\label{lem:3.4}
There exists an absolute constant $C\ge1$ such that the following holds. Let $(\varepsilon_k)_{0\le k\le n}$ be independent Rademacher random variables. One may couple $(\varepsilon_k)_{0\le k\le n}$ with two independent copies $(Y_{t,1})_{t\ge0}$ and $(Y_{t,2})_{t\ge0}$ of the process $Y$ from \cref{eq:2.1} so that
\[
    \PP\Big(\sup_{t\ge0}\Big|f_n(e^{-t/n})-\sqrt n\,Y_{t,1}\Big|
    + \sup_{t\ge0}\Big|f_n(-e^{-t/n})-\sqrt n\,Y_{t,2}\Big|\ge C\log n\Big)
    \ll n^{-2}.
\]
\end{lemma}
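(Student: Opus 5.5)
Split the coefficients by parity and treat the two sign choices together. Write $\varepsilon_k^{e}=\varepsilon_{2j}$ for $k=2j$ and $\varepsilon_k^{o}=\varepsilon_{2j+1}$. Then $f_n(\pm x)=E(x^2)\pm xO(x^2)$, where $E(y)=\sum_{2j\le n}\varepsilon_{2j}y^j$ and $O(y)=\sum_{2j+1\le n}\varepsilon_{2j+1}y^j$. Apply \cref{lem:3.2} separately to the independent sequences $(\varepsilon_{2j})$ and $(\varepsilon_{2j+1})$, with $A=3$ and length $m\approx n/2$. This gives independent Brownian motions $W^e,W^o$ with partial sums $S^e_k=\sum_{j\le k}\varepsilon_{2j}$ and $S^o_k$ within $C\log n$ of $W^e_k,W^o_k$ for all $k\le m$, outside an event of probability $\ll n^{-3}$.

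Next rewrite $E(e^{-2t/n})$ by Abel summation:
\[
E(y)=\sum_{j\le m}(S^e_j-S^e_{j-1})y^j=S^e_m y^m+\sum_{j<m}S^e_j(y^j-y^{j+1}).
\]
Since $y=e^{-2t/n}\in(0,1]$, the weights $y^m$ and $y^j-y^{j+1}$ are nonnegative and sum to $1$. Replacing $S^e_j$ by $W^e_j$ therefore costs at most $\max_j|S^e_j-W^e_j|\le C\log n$, uniformly in $t$. I would then compare the resulting sum with the stochastic integral $\int_0^m y^{u}\,dW^e_u$, which after integration by parts equals
\[
W^e_m y^m-\log y\int_0^m W^e_u y^u\,du.
\]
The discretization error is
\[
\sum_j\int_j^{j+1}(W^e_u-W^e_j)\,(-\log y)\,y^u\,du,
\]
plus the analogous boundary term. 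This is bounded by $\sup_{j\le m}\sup_{u\in[j,j+1]}|W^e_u-W^e_j|$ times a total weight of at most $1$. By Gaussian tails and a union bound over $j\le m$, this supremum is $\le C\sqrt{\log n}$ with probability $1-O(n^{-3})$. The odd part is handled the same way, with the extra factor $x\le1$ harmless.

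Now rescale. Set $B^{\pm}_s=m^{-1/2}\,(W^e_{ms}\pm W^o_{ms})/\sqrt2\cdot$ (appropriate normalization). Then $\int_0^m e^{-2tu/n}\,dW^e_u=\sqrt m\int_0^1 e^{-s\,2tm/n}\,d\tilde W^e_s$. Combining even and odd parts, $E(x^2)\pm O(x^2)$ becomes $\int_0^m e^{-2tu/n}\,d(W^e\pm W^o)_u$, and $(W^e+W^o)/\sqrt2$ and $(W^e-W^o)/\sqrt2$ are independent Brownian motions. This produces $\sqrt{2m}\int_0^1 e^{-s\cdot2tm/n}\,dB^\pm_s$, with $2m\approx n$, which matches $\sqrt n\,Y_{t,j}$ up to the mismatches below.

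\textbf{Main obstacle.} Three mismatches must be absorbed, each uniformly in $t\ge0$.

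First, $2m$ versus $n$ (off by at most $1$) in both the prefactor and the time scaling. Changing $\sqrt{2m}$ to $\sqrt n$ costs $O(n^{-1/2})\sup_t|Y_t|$. Reparametrizing $t\mapsto t\cdot2m/n$ costs $\sup_t|Y_{t(1+O(1/n))}-Y_t|$. Both are controlled by the Gaussian sup bound $\sup_t|Y_t|\le\sqrt{\log n}$ w.h.p. (Borell--TIS) and the derivative bound $\sup_t t|\partial_tY_t|=O(\sqrt{\log n})$ w.h.p.

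Second, the factor $x=e^{-t/n}$ multiplying the odd part. Its effect is $(1-e^{-t/n})\,|O(x^2)|$. For $t\le n$ this is $\ll (t/n)\sqrt n\,|Y'|$-type quantity, which is bounded using the decay of $Y_t$ as $t^{-1/2}$. For $t\ge n$ the whole polynomial is tiny anyway, since $\sqrt n\,Y_t\ll\sqrt{n/t}\sqrt{\log n}$.

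Third, the $\varepsilon_0$ offset and the fact that the KMT walk starts at index $0$. This is an $O(1)$ term.

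I expect the uniform control of the second mismatch, the odd factor $x$, to need the most care. I would handle it by writing $xO(x^2)-O(x^2)$ again via Abel summation, which reduces it to the same weighted-average estimate.
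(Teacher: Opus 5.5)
Your proposal is correct, and its skeleton is the paper's. Both arguments split by parity, apply KMT to each parity walk, and integrate by parts against a kernel of total mass at most $1$. Both then pass from lattice to continuous time by a Brownian modulus bound and rotate the two independent Brownian motions by $\pm$ to get independent copies of $Y$.

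The real difference is the time parametrization. By reindexing with $y=x^2$ you create the mismatches you list: the factor $x$ on the odd part, and $2m$ versus $n$. Removing them then costs extra uniform Gaussian estimates: Borell--TIS for $Y$ and for $t\,\partial_tY_t$, and a decay envelope for $\sqrt t\,Y_t$.

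The paper also applies KMT to the reindexed parity walks, but it keeps the original exponent $k$ as the time $s=k/n$. It writes $n^{-1/2}\sum_{k\text{ odd}}\varepsilon_k e^{-kt/n}=\int_0^1 e^{-st}\,d\bigl(n^{-1/2}\sum_{k\le ns,\,k\text{ odd}}\varepsilon_k\bigr)$. Each parity walk, which has about $ns/2$ steps by time $s$, is matched with $2^{-1/2}B_s$ on $[0,1]$. Then there is no factor $x$ and no $2m$ versus $n$ issue.

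A single identity finishes the proof. With $E_n$ the normalized even part, $Y^e_t=2^{-1/2}\int_0^1e^{-st}\,dB^e_s$ and $R^e$ the KMT remainder, it reads $E_n(e^{-t/n})-Y^e_t=\varepsilon_0/\sqrt n+e^{-t}R^e_1+\int_0^1 te^{-st}R^e_s\,ds$, and $\int_0^1 te^{-st}\,ds\le 1$. Finally $Y^e\pm Y^o$ are independent copies of $Y$, because $Y^e$ and $Y^o$ are independent with covariance $\tfrac12\E[Y_sY_t]$. So the paper needs only the KMT and Brownian-modulus inputs, while your route needs additional standard but nontrivial supremum bounds.

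Two details in your treatment of the odd factor need fixing.

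First, your fallback does not by itself close the gap. Abel summation of $xO(x^2)-O(x^2)$ only produces $(1-x)$ times a weighted average of the $S^o_j$. Bounding that average by $\max_j|S^o_j|$ loses a factor of order $\sqrt n$ when $t\asymp n$. What does work is to Abel-sum $xO(x^2)=\sum_j\varepsilon_{2j+1}y^{j+1/2}$ directly and compare it with $\int_0^m y^u\,dW^o_u$. The half-step shift then costs only a local Brownian oscillation, and this is the paper's device in disguise.

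Second, the bound $\sqrt n\,|Y_t|\ll\sqrt{n/t}\sqrt{\log n}$ cannot hold uniformly over all $t\ge n$. Indeed $\sup_t\sqrt t\,|Y_t|=\infty$ almost surely, since $e^{u/2}Y_{e^u}$ is asymptotically stationary. You do not need this bound, though. For $t\ge n$ you have $x\le e^{-1}$, so $|O(x^2)|\le(1-e^{-2})^{-1}$ deterministically. For $t\le n$, a dyadic-block Borell--TIS bound on $\sqrt{t/n}\,\sqrt t\,|Y_t|$ suffices.
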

\begin{proof}
Write
\[
    E_n(x)=\frac{f_n(x)+f_n(-x)}{2\sqrt n},
    \qquad
    O_n(x)=\frac{f_n(x)-f_n(-x)}{2\sqrt n}.
\]
Then
\[
    E_n(e^{-t/n})=\frac{1}{\sqrt n}\sum_{\substack{0\le k\le n\\ k\text{ even}}}\varepsilon_k e^{-kt/n},
    \qquad
    O_n(e^{-t/n})=\frac{1}{\sqrt n}\sum_{\substack{0\le k\le n\\ k\text{ odd}}}\varepsilon_k e^{-kt/n}.
\]
The processes $E_n$ and $O_n$ are independent because they depend on disjoint sets of coefficients.

Applying \cref{lem:3.2} to the reindexed even and odd subsequences, we obtain independent standard Brownian motions $(B_s^e)_{0\le s\le1}$ and $(B_s^o)_{0\le s\le1}$ such that, with
\[
    R_s^e=\frac{1}{\sqrt n}\sum_{\substack{1\le k\le ns\\ k\text{ even}}}\varepsilon_k-\frac{1}{\sqrt2}B_s^e,
    \qquad
    R_s^o=\frac{1}{\sqrt n}\sum_{\substack{1\le k\le ns\\ k\text{ odd}}}\varepsilon_k-\frac{1}{\sqrt2}B_s^o,
\]
we have
\[
    \PP\Big(\sup_{0\le s\le1}|R_s^e|+\sup_{0\le s\le1}|R_s^o|\ge C\frac{\log n}{\sqrt n}\Big)\ll n^{-2}
\]
for a suitable absolute constant $C$.

Strictly speaking, \cref{lem:3.2} is stated only at the lattice points $s\in n^{-1}\Z$. However, standard Brownian continuity estimates imply
\[
    \PP\Big(\sup_{|t_1-t_2|\le1/n}|B_{t_1}-B_{t_2}|\ge x\Big)\le n e^{-\Omega(nx^2)},
\]
so the estimate upgrades to all $s\in[0,1]$ at the cost of an error $\ll\sqrt{(\log n)/n}$, which is absorbed into the displayed bound.

Now define
\[
    Y_t^e=2^{-1/2}\int_0^1 e^{-st}\,dB_s^e,
    \qquad
    Y_t^o=2^{-1/2}\int_0^1 e^{-st}\,dB_s^o.
\]
Since
\[
    E_n(e^{-t/n})=\frac{\varepsilon_0}{\sqrt n}+\int_0^1 e^{-st}\,d\Big(\frac{1}{\sqrt2}B_s^e+R_s^e\Big),
    \qquad
    O_n(e^{-t/n})=\int_0^1 e^{-st}\,d\Big(\frac{1}{\sqrt2}B_s^o+R_s^o\Big),
\]
integration by parts gives
\begin{align*}
E_n(e^{-t/n})-Y_t^e
&= \frac{\varepsilon_0}{\sqrt n}+e^{-t}R_1^e+\int_0^1 te^{-st}R_s^e\,ds,\\
O_n(e^{-t/n})-Y_t^o
&= e^{-t}R_1^o+\int_0^1 te^{-st}R_s^o\,ds.
\end{align*}
Since
\[
    \sup_{t\ge0}\int_0^1 te^{-st}\,ds\le1,
\]
it follows that, with probability $1-O(n^{-2})$,
\[
    \sup_{t\ge0}|E_n(e^{-t/n})-Y_t^e| + \sup_{t\ge0}|O_n(e^{-t/n})-Y_t^o| \ll \frac{\log n}{\sqrt n}.
\]
Finally, set
\[
    Y_{t,1}=Y_t^e+Y_t^o,
    \qquad
    Y_{t,2}=Y_t^e-Y_t^o.
\]
Since $Y_t^e$ and $Y_t^o$ are independent centered Gaussian processes, each with covariance $\tfrac12\E[Y_sY_t]$, the processes $(Y_{t,1})_{t\ge0}$ and $(Y_{t,2})_{t\ge0}$ are independent copies of $Y$. Also,
\[
    \frac{1}{\sqrt n}f_n(e^{-t/n})=E_n(e^{-t/n})+O_n(e^{-t/n}),
    \qquad
    \frac{1}{\sqrt n}f_n(-e^{-t/n})=E_n(e^{-t/n})-O_n(e^{-t/n}).
\]
Therefore,
\[
    \sup_{t\ge0}\Big|\frac{1}{\sqrt n}f_n(e^{-t/n})-Y_{t,1}\Big|
    + \sup_{t\ge0}\Big|\frac{1}{\sqrt n}f_n(-e^{-t/n})-Y_{t,2}\Big|
    \ll \frac{\log n}{\sqrt n}
\]
with probability $1-O(n^{-2})$, which is exactly the desired statement.
\end{proof}
\section{Quantitative continuity for the small-ball inverse}
Recall that
\[
    F(\delta)=\PP\Big(\sup_{t\ge0}|Y_t| \le \delta\Big),
    \qquad \delta>0.
\]
Strictly speaking, we have not yet proved that $F$ is continuous and strictly increasing on $(0,\infty)$, but this is a routine exercise in the theory of Gaussian processes. Since $Y$ has continuous sample paths and tends to $0$ at infinity, one may view $Y$ as a centered Gaussian random element of $C_0([0,\infty])$; from there, standard arguments show that $F(\delta)\in(0,1)$ for every $\delta>0$, that $F$ is continuous and strictly increasing, and hence that
\[
    F^{-1}\colon(0,1)\to(0,\infty)
\]
is well-defined. Let
\[
    \Psi(x)=\log F(e^{-x}).
\]
Since $F$ is continuous and strictly increasing, its inverse $F^{-1}$ is also continuous and strictly increasing. The purpose of this section is to quantify how $F$ and $F^{-1}$ change under small multiplicative perturbations; these estimates are crucial for the proof of \cref{thm:1.1}.

\begin{proposition} \label{prop:4.1}
Let $\rho\in(0,1)$. Then there exists $\delta_0\in(0,1)$ such that whenever $\delta\in(0,\delta_0)$ and $t\in[-1,1]$,
\begin{equation} \label{eq:4.1}
    \Bigg|\log \frac{F(\delta e^t)}{F(\delta)} - \frac{2t}{\pi^2}\log^2(1/\delta)\Bigg|
    \le \rho |t|\log^2(1/\delta).
\end{equation}
\end{proposition}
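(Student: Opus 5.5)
The plan is to combine the Gaussian $B$-inequality of Cordero-Erausquin, Fradelizi and Maurey with the quantitative asymptotic of \cref{thm:2.7}. The key structural input is concavity of $\Psi$. The $B$-inequality says that for a centered Gaussian measure $\gamma$ and a symmetric convex set $K$, the map $s\mapsto\gamma(e^{s}K)$ is log-concave on $\R$. We apply it to the finite-dimensional marginals of $Y$ on a mesh, with $K$ the cube $\{\max_i|y_i|\le1\}$. Passing to the limit as described in \cref{subsec:2.1}, we get that $s\mapsto\log F(e^{s})$ is concave; equivalently, $\Psi(x)=\log F(e^{-x})$ is concave on $\R$.

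Write $\Lambda=\log(1/\delta)$ and $\phi(x)=-\frac{2}{3\pi^2}x^3$. \cref{thm:2.7} gives $\Psi(x)=\phi(x)+E(x)$ with $|E(x)|\le C x^{5/2}\sqrt{\log x}$ for large $x$. The quantity to control is
\[
\log\frac{F(\delta e^t)}{F(\delta)}=\Psi(\Lambda-t)-\Psi(\Lambda),
\]
and the target slope is $-\phi'(\Lambda)=\frac{2}{\pi^2}\Lambda^2$. Concavity gives the standard sandwich of difference quotients. For $0<h\le|t|$ (taking $t>0$ for definiteness), the chord slope of $\Psi$ over $[\Lambda-t,\Lambda]$ lies between the chord slope over $[\Lambda,\Lambda+h]$ and the chord slope over $[\Lambda-t-h,\Lambda-t]$. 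Each of these outer chord slopes is computed from \cref{thm:2.7}: it equals the corresponding chord slope of $\phi$ up to an error $O(\Lambda^{5/2}\sqrt{\log\Lambda}/h)$. The chord slopes of $\phi$ over intervals of length $O(1)$ near $\Lambda$ are $-\frac{2}{\pi^2}\Lambda^2+O(\Lambda)$.

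The main obstacle is that $t$ may be arbitrarily small, so the argument cannot take $h\le|t|$. The fix is to decouple $h$ from $t$. By concavity, for every $t\in[-1,1]$ the single quotient $(\Psi(\Lambda-t)-\Psi(\Lambda))/t$ lies between the left and right derivatives of $\Psi$ at the points $\Lambda\pm1$. Those one-sided derivatives are in turn bounded by chord slopes over $[\Lambda+1,\Lambda+1+h]$ and $[\Lambda-1-h,\Lambda-1]$, with $h$ a fixed large multiple of $\Lambda^{1/2}\sqrt{\log\Lambda}$ chosen independently of $t$. Choosing $h=\Lambda^{3/4}$, say, both outer chord slopes of $\phi$ equal $-\frac{2}{\pi^2}\Lambda^2+O(\Lambda^{7/4})$. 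The error term contributes $O(\Lambda^{5/2}\sqrt{\log\Lambda}/\Lambda^{3/4})=O(\Lambda^{7/4}\sqrt{\log\Lambda})$. Both contributions are $o(\Lambda^2)$. Hence
\[
\Big|\Psi(\Lambda-t)-\Psi(\Lambda)-\tfrac{2t}{\pi^2}\Lambda^2\Big|\le |t|\cdot o(\Lambda^2)
\]
uniformly in $t\in[-1,1]$, and the left-hand side is $\le\rho|t|\Lambda^2$ once $\delta<\delta_0(\rho)$. Here $|t|$ factors out because we bound a difference quotient, which handles small $t$ for free.

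Some care is needed in passing the $B$-inequality to the infinite-dimensional sup-event. Take finite meshes $D_m\uparrow$ a dense countable set. Then $\log\PP(\max_{D_m}|Y|\le e^{s})$ is concave in $s$ and decreases to $\log F(e^s)$ for each $s$, since $Y$ has continuous paths. A pointwise limit of concave functions is concave, so no further regularity of $F$ is required.
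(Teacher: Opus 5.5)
Your proof is correct and is essentially the paper's argument: concavity of $\Psi$ from the Gaussian $B$-inequality (with the mesh limit you describe), then sandwiching the short chord slope between chord slopes over mesoscopic windows where the cubic asymptotic is accurate. One harmless sign slip: it is $(\Psi(\Lambda)-\Psi(\Lambda-t))/t$, not its negative, that lies between the one-sided derivatives at $\Lambda\pm1$. The differences from the paper are cosmetic: you use the quantitative error of \cref{thm:2.7} with window $h=\Lambda^{3/4}$, which gives an explicit $O(|t|\Lambda^{7/4}\sqrt{\log\Lambda})$ error, where the paper uses only $r(x)=o(x^3)$ with window $\lfloor\varepsilon^{1/2}x\rfloor$, and you treat both signs of $t$ at once instead of reducing $t<0$ to $t>0$ by replacing $\delta$ with $\delta e^t$.
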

\begin{proof}
By the Gaussian $B$-inequality \cite[Theorem~1]{CFM04}, the function $\Psi$ is concave on $(0,\infty)$. Write
\[
    \Psi(x)=-\frac{2}{3\pi^2}x^3+r(x),
\]
where \cref{thm:1.2} gives $r(x)=o(x^3)$ as $x\to\infty$. Fix $\rho\in(0,1)$. Choose $\varepsilon>0$ sufficiently small, and then choose $x_0$ so large that
\[
    |r(y)|\le\varepsilon y^3
\]
for all $y\ge x_0/2$. For $x\ge x_0$, let $m=\lfloor\varepsilon^{1/2}x\rfloor$. Then $m\in[1,x/2]$, and
\begin{align*}
    \frac{\Psi(x+m)-\Psi(x)}{m}
    &= -\frac{2}{\pi^2}x^2 + O(\varepsilon^{1/2}x^2),\\
    \frac{\Psi(x)-\Psi(x-m)}{m}
    &= -\frac{2}{\pi^2}x^2 + O(\varepsilon^{1/2}x^2).
\end{align*}
After shrinking $\varepsilon$ if necessary, the error terms are bounded by $(\rho/4)x^2$. Concavity of $\Psi$ therefore yields
\begin{align*}
    \Psi(x+1)-\Psi(x)
    &\ge \frac{\Psi(x+m)-\Psi(x)}{m}
     = -\frac{2}{\pi^2}x^2+O(\rho x^2),\\
    \Psi(x)-\Psi(x-1)
    &\le \frac{\Psi(x)-\Psi(x-m)}{m}
     = -\frac{2}{\pi^2}x^2+O(\rho x^2).
\end{align*}
For $t\in(0,1]$, concavity again gives
\[
    t\big(\Psi(x+1)-\Psi(x)\big)
    \le \Psi(x)-\Psi(x-t)
    \le t\big(\Psi(x)-\Psi(x-1)\big),
\]
and hence
\[
    \Psi(x-t)-\Psi(x)=\frac{2t}{\pi^2}x^2+O(\rho t x^2).
\]
Since
\[
    \log\frac{F(\delta e^t)}{F(\delta)}
    = \Psi\Big(\log\frac{1}{\delta}-t\Big)-\Psi\Big(\log\frac{1}{\delta}\Big),
\]
this proves \cref{eq:4.1} for $t\in(0,1]$. The case $t\in[-1,0)$ follows by applying the same estimate to $\delta e^t$ in place of $\delta$ and observing that
\[
    \log^2\frac{1}{\delta e^t}=\log^2\frac{1}{\delta}+O\Big(\log\frac{1}{\delta}\Big).
\]
Since $|t|\le1$, the resulting change in the main term is absorbed by the right-hand side of \cref{eq:4.1} once $\delta$ is sufficiently small.
\end{proof}

\begin{corollary}\label{cor:4.2}
Let $\rho\in(0,1)$. Then there exist $\delta_0\in(0,1)$ and $\tau_0\in(0,1)$ such that whenever $\delta\in(0,\delta_0)$ and $|\tau|\le\tau_0$,
\[
    \Bigg|\log\frac{F(\delta(1+\tau))}{F(\delta)} - \frac{2\tau}{\pi^2}\log^2(1/\delta)\Bigg|
    \le \rho |\tau|\log^2(1/\delta).
\]
\end{corollary}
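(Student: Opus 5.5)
The plan is to reduce \cref{cor:4.2} directly to \cref{prop:4.1} by the substitution $1+\tau=e^{t}$, that is, $t=\log(1+\tau)$. Choose $\tau_0\le 1/2$, so that $|t|\le 1$ and \cref{prop:4.1} applies with a parameter $\rho'$ to be fixed shortly. This yields
\[
    \Bigg|\log\frac{F(\delta(1+\tau))}{F(\delta)}-\frac{2\log(1+\tau)}{\pi^2}\log^2(1/\delta)\Bigg|\le \rho'|\log(1+\tau)|\log^2(1/\delta)
\]
for all $\delta\in(0,\delta_0(\rho'))$.

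It remains to replace $\log(1+\tau)$ by $\tau$. For $|\tau|\le\tau_0\le1/2$ we have $|\log(1+\tau)-\tau|\le \tau^2\le \tau_0|\tau|$ and $|\log(1+\tau)|\le 2|\tau|$. The triangle inequality then bounds the left-hand side of the corollary by
\[
    \Big(\frac{2\tau_0}{\pi^2}+2\rho'\Big)|\tau|\log^2(1/\delta).
\]
Taking $\rho'=\rho/4$ and $\tau_0=\min\{1/2,\pi^2\rho/4\}$ makes this at most $\rho|\tau|\log^2(1/\delta)$. We then take $\delta_0$ to be the one supplied by \cref{prop:4.1} for $\rho'$.

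There is no real obstacle here. The only point to watch is that the error must be linear in $|\tau|$, uniformly as $\tau\to0$, rather than an additive error. This is exactly why the quadratic Taylor remainder $\tau^2$ must be absorbed by restricting to $|\tau|\le\tau_0$ with $\tau_0$ depending on $\rho$, rather than by shrinking $\delta$. The case $\tau=0$ is trivial.
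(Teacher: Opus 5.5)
Your proposal is correct and follows essentially the same route as the paper: substitute $t=\log(1+\tau)$, apply \cref{prop:4.1} with $\rho/4$, and absorb the discrepancy $|\log(1+\tau)-\tau|$ into the linear error by shrinking $\tau_0$ depending on $\rho$. Your explicit constants check out: for $|\tau|\le1/2$ you have $|t|\le 2|\tau|$ and $|t-\tau|\le\tau^2$, and $\tau_0=\min\{1/2,\pi^2\rho/4\}$ then gives total error at most $\rho|\tau|\log^2(1/\delta)$.
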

\begin{proof}
By \cref{prop:4.1}, after shrinking $\delta_0$ if necessary,
\[
    \Bigg|\log\frac{F(\delta e^t)}{F(\delta)} - \frac{2t}{\pi^2}\log^2(1/\delta)\Bigg|
    \le \frac{\rho}{4}|t|\log^2(1/\delta)
\]
whenever $0<\delta<\delta_0$ and $t\in[-1,1]$. Now set $t=\log(1+\tau)$. For $|\tau|\le1/2$,
\[
    |t|\le2|\tau|,
    \qquad
    |t-\tau|\le2\tau^2.
\]
After shrinking $\tau_0\in(0,1/2]$ if necessary, we also have
\[
    \frac{2}{\pi^2}|t-\tau|\le\frac{\rho}{2}|\tau|
\]
for all $|\tau|\le\tau_0$. The triangle inequality now gives
\[
    \Bigg|\log\frac{F(\delta(1+\tau))}{F(\delta)} - \frac{2\tau}{\pi^2}\log^2(1/\delta)\Bigg|
    \le \rho|\tau|\log^2(1/\delta). \qedhere
\]
\end{proof}

The next corollary is the inverse form needed later. It says that if one perturbs the value of $F$ multiplicatively by $e^{\sigma_n}$, with $|\sigma_n|=o(\log^2(1/\delta_n))$, then the corresponding perturbation of $F^{-1}$ has relative size $\sigma_n/\log^2(1/\delta_n)$.
\begin{corollary} \label{cor:4.3}
Let $\eta>0$, let $(y_n)$ be a sequence in $(0,1)$ with $y_n\downarrow0$, let $(\sigma_n)$ be a sequence of real numbers such that $y_n e^{\sigma_n}\in(0,1)$ for all $n$, and set $\delta_n=F^{-1}(y_n)$. Assume that
\[
    \frac{|\sigma_n|}{\log^2(1/\delta_n)}\to0.
\]
Then for all sufficiently large $n$,
\begin{equation} \label{eq:4.2}
    \Bigg|\frac{F^{-1}(y_n e^{\sigma_n})}{F^{-1}(y_n)} - \Big(1+\frac{\pi^2\sigma_n}{2\log^2(1/\delta_n)}\Big)\Bigg|
    \le \eta\frac{|\sigma_n|}{\log^2(1/\delta_n)}.
\end{equation}
\end{corollary}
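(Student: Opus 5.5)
We invert \cref{cor:4.2}. Write $\delta_n=F^{-1}(y_n)$, $L_n=\log^2(1/\delta_n)$, and define $\tau_n$ by $F^{-1}(y_ne^{\sigma_n})=\delta_n(1+\tau_n)$. The quantity to control is then $\tau_n-\pi^2\sigma_n/(2L_n)$. Since $y_n\downarrow0$ and $F$ is continuous and strictly increasing with $F(\delta)\to0$ only as $\delta\to0$, we have $\delta_n\to0$. Eventually, therefore, $\delta_n<\delta_0$, where $\delta_0,\tau_0$ are given by \cref{cor:4.2} for a parameter $\rho$ to be chosen in terms of $\eta$.

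The first step is to show a priori that $\tau_n\to0$, and in particular $|\tau_n|\le\tau_0$ eventually. I would argue by monotonicity. Let $\varepsilon_n=|\sigma_n|/L_n\to0$, and take a small fixed $\tau_1\le\tau_0$. By \cref{cor:4.2} with $\rho=1/\pi^2$,
\[
\log\frac{F(\delta_n(1+\tau_1))}{F(\delta_n)}\ge \frac{\tau_1}{\pi^2}L_n>\sigma_n
\]
for large $n$, and similarly $\log\bigl(F(\delta_n(1-\tau_1))/F(\delta_n)\bigr)<\sigma_n$. Since $F$ is strictly increasing and $F(\delta_n(1+\tau_n))=y_ne^{\sigma_n}=F(\delta_n)e^{\sigma_n}$, this forces $|\tau_n|<\tau_1$. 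Because $\tau_1$ is arbitrary, $\tau_n\to0$.

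Next, I apply \cref{cor:4.2} at $\tau=\tau_n$. The left-hand side is exactly $\sigma_n$, so
\[
\Bigl|\sigma_n-\frac{2\tau_n}{\pi^2}L_n\Bigr|\le\rho|\tau_n|L_n.
\]
Dividing by $2L_n/\pi^2$ gives
\[
\Bigl|\tau_n-\frac{\pi^2\sigma_n}{2L_n}\Bigr|\le\frac{\pi^2\rho}{2}|\tau_n|.
\]
This also bounds $|\tau_n|$ in terms of $\sigma_n$: for $\rho$ small,
\[
|\tau_n|\le \frac{\pi^2|\sigma_n|}{2L_n}+\frac{\pi^2\rho}{2}|\tau_n|,
\]
so $|\tau_n|\le \pi^2|\sigma_n|/L_n$. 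Substituting back,
\[
\Bigl|\tau_n-\frac{\pi^2\sigma_n}{2L_n}\Bigr|\le\frac{\pi^4\rho}{2}\,\frac{|\sigma_n|}{L_n}.
\]
Choosing $\rho\le 2\eta/\pi^4$ yields \cref{eq:4.2}.

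The case $\sigma_n=0$ is trivial, since then $\tau_n=0$. The only delicate point is the a priori smallness of $\tau_n$, which is needed to stay within the range of validity of \cref{cor:4.2}. The monotonicity argument above handles it, using only the strict monotonicity of $F$ and the hypothesis $\sigma_n=o(L_n)$. Note also that $\rho$ must be fixed before $\delta_0$ and $\tau_0$ are produced, so the constants are chosen in the order $\eta\to\rho\to(\delta_0,\tau_0)\to\tau_1\to n$.
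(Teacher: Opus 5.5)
Your proposal is correct and follows the paper's proof essentially step for step. You define $\tau_n$ by $F^{-1}(y_ne^{\sigma_n})=\delta_n(1+\tau_n)$, use \cref{cor:4.2} at fixed perturbations $\pm\tau_1$ together with monotonicity of $F$ to get $|\tau_n|\le\tau_0$ eventually, and then apply \cref{cor:4.2} at $\tau_n$ with $\rho\le\min(\pi^{-2},2\eta/\pi^4)$ to obtain $|\tau_n|\le\pi^2|\sigma_n|/L_n$ and conclude; your explicit monotonicity argument just spells out the a priori step the paper states in one line.
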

\begin{proof}
Let
\[
    1+\tau_n = \frac{F^{-1}(y_n e^{\sigma_n})}{F^{-1}(y_n)},
\]
so that
\[
    \sigma_n = \log\frac{F(\delta_n(1+\tau_n))}{F(\delta_n)}.
\]
Choose $\rho\in(0,\pi^{-2}]$ so that $\pi^4\rho/2\le\eta$. Since $|\sigma_n|/\log^2(1/\delta_n)\to0$, \cref{cor:4.2} applied at the fixed perturbations $\pm\tau_0$ shows that $|\tau_n|\le\tau_0$ eventually. For such $n$,
\[
    \Big|\sigma_n-\frac{2}{\pi^2}\tau_n\log^2(1/\delta_n)\Big|
    \le \rho|\tau_n|\log^2(1/\delta_n).
\]
Since $\rho\le\pi^{-2}$, this implies
\[
    \frac{1}{\pi^2}|\tau_n|\log^2(1/\delta_n)\le|\sigma_n|,
\]
and therefore
\[
    \Big|\tau_n-\frac{\pi^2\sigma_n}{2\log^2(1/\delta_n)}\Big|
    \le \frac{\pi^2\rho}{2}|\tau_n|
    \le \eta\frac{|\sigma_n|}{\log^2(1/\delta_n)}.
\]
This is exactly \cref{eq:4.2}.
\end{proof}
\section[Proving Theorem~1.1]{Proving \texorpdfstring{\cref{thm:1.1}}{Theorem~1.1}} \label{sec:5}
Throughout this section, $N$ denotes a dyadic scale, that is, $N=2^k$. We write
\begin{equation} \label{eq:5.1}
    s_n=(\log\log n)^{1/3},
    \qquad
    p_n=\log^{-1/2}n,
    \qquad
    b_n=F^{-1}(p_n).
\end{equation}
Then \cref{thm:1.1} is equivalent to
\begin{equation} \label{eq:5.2}
    \liminf_{n\to\infty}\frac{\snorm{f_n}_\infty}{\sqrt n\,b_n}=1
    \qquad \text{almost surely.}
\end{equation}
The scale $b_n$ is defined by $F(b_n)=p_n$. We first record its size and its stability on dyadic blocks. We then prove the lower bound in \cref{eq:5.2} by passing from a dyadic block to a geometric mesh and using the coupling from \cref{sec:3}. Finally, we prove the upper bound by splitting $f_{N_{j+1}}$ into an old part and an independent fresh block.

\subsection{Asymptotics and stability of the scale sequence} \label{subsec:5.1}
We begin with the two properties of $b_n$ needed below: its size and its stability under small relative changes in $n$.

\begin{lemma} \label{lem:5.1}
Let $(b_n)$ be defined by \cref{eq:5.1}. Then, for all sufficiently large $n$,
\begin{equation} \label{eq:5.3}
    \log(1/b_n)=\Big(\Big(\frac{3\pi^2}{4}\Big)^{1/3}+o(1)\Big)s_n.
\end{equation}
\end{lemma}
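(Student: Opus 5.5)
We invert the asymptotic of \cref{thm:1.2} at the level $y=p_n$. By definition $F(b_n)=p_n=\log^{-1/2}n$, so
\[
    \log F(b_n)=-\tfrac12\log\log n=-\tfrac12 s_n^3 .
\]
Since $p_n\downarrow0$ and $F$ is continuous and strictly increasing with $F(\delta)\to0$ only as $\delta\to0$, we have $b_n=F^{-1}(p_n)\to0$. Hence $b_n\in(0,1/4)$ for all large $n$, and \cref{thm:1.2} applies at $\delta=b_n$:
\[
    -\tfrac12 s_n^3=-\frac{2}{3\pi^2}\log^3(1/b_n)+o\big(\log^3(1/b_n)\big).
\]

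To justify $b_n\to0$ carefully, note that for any fixed $\delta>0$ we have $F(\delta)>0$, which is part of the standard facts recorded at the start of the section. Therefore $p_n<F(\delta)$ eventually, and monotonicity gives $b_n<\delta$. Writing $x_n=\log(1/b_n)\to\infty$, the display becomes
\[
    x_n^3\Big(\frac{2}{3\pi^2}+o(1)\Big)=\tfrac12 s_n^3,
\]
that is,
\[
    x_n^3=\Big(\frac{3\pi^2}{4}+o(1)\Big)s_n^3 .
\]
Taking cube roots, which is continuous at the positive constant $3\pi^2/4$, gives $x_n=\big((3\pi^2/4)^{1/3}+o(1)\big)s_n$. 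This is \cref{eq:5.3}.

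No step here is a real obstacle. The only point needing care is that the error term $o(\log^3(1/\delta))$ in \cref{thm:1.2} is relative to $\delta\to0$, so we must first establish $b_n\to0$ before invoking it; this is exactly what the positivity of $F$ provides. If one wants a rate, \cref{thm:2.7} instead gives $x_n^3=\frac{3\pi^2}{4}s_n^3+O(s_n^{5/2}\sqrt{\log s_n})$, and hence $x_n=(3\pi^2/4)^{1/3}s_n+O(s_n^{1/2}\sqrt{\log s_n})$.
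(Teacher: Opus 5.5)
Your proof is correct and follows the paper's argument: both write $\log F(b_n)=-\tfrac12 s_n^3$, apply \cref{thm:1.2} at $\delta=b_n$, and solve for $\log(1/b_n)$ by taking cube roots. Two of your steps differ only slightly from the paper and are valid: you check explicitly that $b_n\to0$, which the paper leaves implicit, and you divide directly by $\frac{2}{3\pi^2}+o(1)$ where the paper first shows $\log(1/b_n)\asymp s_n$.
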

\begin{proof}
Since $F(b_n)=p_n=\log^{-1/2}n$, we have $\log p_n=-\frac12\log\log n=-\frac12 s_n^3$. Applying \cref{thm:1.2} with $\delta=b_n$ gives
\[
    \frac{2}{3\pi^2}\log^3(1/b_n)=\frac12 s_n^3+o\big(\log^3(1/b_n)\big).
\]
In particular, $\log(1/b_n)\asymp s_n$, since otherwise the two cubic terms could not balance. Substituting this back into the displayed identity yields
\[
    \log^3(1/b_n)=\frac{3\pi^2}{4}s_n^3+o(s_n^3),
\]
and taking cube roots proves \cref{eq:5.3}.
\end{proof}

\begin{lemma}\label{lem:5.2}
There exists an absolute constant $C>0$ such that for all sufficiently large $N$, all $\Delta\in(0,1/2]$, and all $m,n\in[N,2N]$ with $|m-n|\le\Delta N$, one has
\begin{equation} \label{eq:5.4}
    \Big|\frac{b_n}{b_m}-1\Big|\le C\frac{\Delta}{(\log N)(\log\log N)^{2/3}}.
\end{equation}
\end{lemma}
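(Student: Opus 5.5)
The plan is to apply \cref{cor:4.3} with $y_n=p_m$ and $y_ne^{\sigma}=p_n$, where
\[
    \sigma=\log\frac{p_n}{p_m}=-\tfrac12\big(\log\log n-\log\log m\big).
\]
Then $b_n=F^{-1}(p_me^{\sigma})$ and $b_m=F^{-1}(p_m)$, so the ratio $b_n/b_m$ is exactly the quantity controlled by \cref{eq:4.2}.

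First I bound $\sigma$. By the mean value theorem,
\[
    |\log\log n-\log\log m| \le \frac{|\log n-\log m|}{\log N}\le \frac{|n-m|}{N\log N}\le \frac{\Delta}{\log N},
\]
since $m,n\ge N$. Hence $|\sigma|\le \Delta/(2\log N)$. Next, by \cref{lem:5.1}, $\log(1/b_m)\asymp s_m=(\log\log m)^{1/3}$. Since $m\in[N,2N]$, this gives $\log^2(1/b_m)\asymp(\log\log N)^{2/3}$ uniformly in $m$. Consequently
\[
    \frac{|\sigma|}{\log^2(1/b_m)}\ll\frac{\Delta}{(\log N)(\log\log N)^{2/3}},
\]
which tends to $0$ uniformly in $m,n,\Delta$ as $N\to\infty$.

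Given this, \cref{eq:4.2} with $\eta=1$ yields
\[
    \Big|\frac{b_n}{b_m}-1\Big|\le\Big(\frac{\pi^2}{2}+1\Big)\frac{|\sigma|}{\log^2(1/b_m)}\ll\frac{\Delta}{(\log N)(\log\log N)^{2/3}},
\]
which is \cref{eq:5.4}.

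The main obstacle is that \cref{cor:4.3} is phrased for a single sequence, whereas here the estimate must hold uniformly over all pairs $(m,n)$ and all $\Delta$. I handle this by arguing through \cref{cor:4.2}, which is uniform, rather than by citing \cref{cor:4.3} directly.

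Fix $\rho=\pi^{-2}$, and take $\delta_0,\tau_0$ from \cref{cor:4.2}. Let $N$ be large enough that $b_m<\delta_0$ for all $m\ge N$; this is possible since $b_m\to0$. Also require that
\[
    \frac{|\sigma|}{\log^2(1/b_m)}<\frac{\tau_0}{\pi^2}.
\]
At $\tau=\pm\tau_0$, \cref{cor:4.2} gives
\[
    \Big|\log\frac{F(b_m(1\pm\tau_0))}{F(b_m)}\Big|\ge\frac{\tau_0}{\pi^2}\log^2(1/b_m)>|\sigma|.
\]
Since $F$ is monotone, the solution $\tau$ of $F(b_m(1+\tau))=F(b_m)e^{\sigma}$ therefore satisfies $|\tau|\le\tau_0$.

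Applying \cref{cor:4.2} again at this $\tau$ gives
\[
    \Big|\sigma-\frac{2\tau}{\pi^2}\log^2(1/b_m)\Big|\le\frac{|\tau|}{\pi^2}\log^2(1/b_m).
\]
Hence $|\tau|\le\pi^2|\sigma|/\log^2(1/b_m)$. Since $1+\tau=b_n/b_m$, this is the required bound with an absolute constant $C$.
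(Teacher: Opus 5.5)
Your proof is correct and follows the paper's route. You bound $|\sigma|\le\Delta/(2\log N)$ by the mean value theorem, use \cref{lem:5.1} to get $\log^2(1/b_m)\asymp(\log\log N)^{2/3}$ uniformly for $m\in[N,2N]$, and then apply the inverse perturbation estimate.

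The only difference is how uniformity over $(m,n,\Delta)$ is obtained. The paper argues by contradiction: it extracts violating sequences $N_i,m_i,n_i$, passes to a subsequence with $p_{m_i}\downarrow0$, and applies \cref{cor:4.3} with $\eta=1$. You instead unroll the proof of \cref{cor:4.3} directly from the already-uniform \cref{cor:4.2} with $\rho=\pi^{-2}$. This avoids the compactness step and gives the explicit bound $|b_n/b_m-1|\le\pi^2|\sigma|/\log^2(1/b_m)$.
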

\begin{proof}
Set $\sigma_{m,n}=\log(p_n/p_m)$, so that $b_n=F^{-1}(p_m e^{\sigma_{m,n}})$. Since
\[
    \sigma_{m,n}=\frac12\log\frac{\log m}{\log n},
\]
the mean-value theorem gives $|\sigma_{m,n}|\ll \Delta/\log N$ uniformly for $m,n\in[N,2N]$ with $|m-n|\le\Delta N$. Also, \cref{lem:5.1} shows that $\log(1/b_m)\asymp s_N\asymp(\log\log N)^{1/3}$ uniformly on $[N,2N]$, so
\[
    \frac{|\sigma_{m,n}|}{\log^2(1/b_m)}
    \ll \frac{\Delta}{(\log N)(\log\log N)^{2/3}}
    \to0.
\]
Suppose that \cref{eq:5.4} fails. Then there are sequences $N_i\to\infty$, $\Delta_i\in(0,1/2]$, and $m_i,n_i\in[N_i,2N_i]$ with $|m_i-n_i|\le\Delta_iN_i$ such that
\[
    \Big|\frac{b_{n_i}}{b_{m_i}}-1\Big|
    > i\,\frac{\Delta_i}{(\log N_i)(\log\log N_i)^{2/3}}.
\]
Set $y_i=p_{m_i}$, $\delta_i=b_{m_i}$, and $\sigma_i=\sigma_{m_i,n_i}$. Then $b_{n_i}=F^{-1}(y_i e^{\sigma_i})$, while the previous display shows that
\[
    \frac{|\sigma_i|}{\log^2(1/\delta_i)}\to0.
\]
Passing to a subsequence, we may also assume that $y_i\downarrow0$, since $y_i\to0$.
Applying \cref{cor:4.3} with $\eta=1$ to these sequences gives
\[
    \Big|\frac{b_{n_i}}{b_{m_i}}-1\Big|
    \ll \frac{|\sigma_i|}{\log^2(1/\delta_i)}
    \ll \frac{\Delta_i}{(\log N_i)(\log\log N_i)^{2/3}},
\]
contradicting the choice of the counterexample sequence. This proves \cref{eq:5.4}.
\end{proof}

\subsection{\texorpdfstring{The lower bound $\liminf\ge1$}{The lower bound liminf >= 1}}\label{subsec:5.2}
Fix a dyadic block $[N,2N]$. We pass to a geometric mesh $\mc I_N$: \cref{lem:5.3} controls the error between nearby indices, and the mesh is sparse enough that the Gaussian small-ball probabilities are summable over $N$.

\begin{lemma} \label{lem:5.3}
There exists an absolute constant $C>0$ such that the following holds. Let $N$ be sufficiently large and $\Delta\in[N^{-1},e^{-1}]$. Then
\begin{equation} \label{eq:5.5}
    \PP\bigg(\max_{\substack{n,m\in[N,2N]\\ |n-m|\le\Delta N}}\max_{x\in[-1,1]} |f_n(x)-f_m(x)|
    \ge C\sqrt{\Delta N}\sqrt{\log(1/\Delta)+\log\log N}\bigg)
    \ll \log^{-2}N.
\end{equation}
\end{lemma}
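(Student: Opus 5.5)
For $m<n$ we have $f_n(x)-f_m(x)=\sum_{k=m+1}^n\varepsilon_k x^k$, a polynomial supported on a block of at most $\Delta N$ consecutive coefficients. The plan is to reduce the maximum over $x\in[-1,1]$ to partial sums of the $\varepsilon_k$, and then control those partial sums with a maximal inequality plus a union bound over a coarse grid of block starting points.

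\emph{Step 1: summation by parts.} For $|x|\le1$ and fixed sign pattern, write $S_j=\sum_{k\le j}\varepsilon_k$. Abel summation gives
\[
\Big|\sum_{k=m+1}^n\varepsilon_kx^k\Big|\le 2\max_{m\le j\le n}|S_j-S_m|,
\]
because $|x|^k$ is monotone in $k$. The sign alternation for $x<0$ needs separate care. Splitting into even and odd $k$ (as in \cref{lem:3.4}), $x^k=\pm|x|^k$ is constant in sign on each parity class, and Abel summation applies to each class. Hence
\[
\max_{x\in[-1,1]}|f_n(x)-f_m(x)|\le 2\max_{m\le j\le n}|S^e_j-S^e_m|+2\max_{m\le j\le n}|S^o_j-S^o_m|,
\]
where $S^e,S^o$ are the partial sums over even and odd indices. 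The same bound holds for $x\in[0,1]$, and the point $x=0$ is trivial.

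\emph{Step 2: reduction to a grid.} Partition $[N,2N]$ by points $N+i\lceil\Delta N\rceil$, so there are $O(1/\Delta)$ grid points. Any pair $m,n\in[N,2N]$ with $|n-m|\le\Delta N$ is contained in an interval $[g,g+2\lceil\Delta N\rceil]$ for some grid point $g$. Therefore the left side of \eqref{eq:5.5} is at most
\[
8\max_g\ \max_{0\le j\le 2\lceil\Delta N\rceil}|S^{e/o}_{g+j}-S^{e/o}_g|.
\]

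\emph{Step 3: tail bound and union bound.} For each $g$ and each parity, L\'evy's (or Doob's) maximal inequality together with Hoeffding's inequality gives
\[
\PP\Big(\max_{j\le 2\Delta N+2}|S_{g+j}-S_g|\ge \lambda\Big)\le 4\exp\big(-\lambda^2/(8\Delta N)\big),
\]
using $\Delta N\ge1$. Take $\lambda=c\sqrt{\Delta N}\sqrt{\log(1/\Delta)+\log\log N}$ with $c$ large. A union bound over the $O(1/\Delta)$ grid points and the two parities then yields a failure probability of
\[
\ll \Delta^{-1}\exp\big(-(c^2/8)(\log(1/\Delta)+\log\log N)\big)\le \Delta^{c^2/8-1}\log^{-c^2/8}N\ll\log^{-2}N,
\]
once $c^2/8\ge2$, using $\Delta\le e^{-1}$. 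This gives \eqref{eq:5.5} with $C=8c$.

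The main obstacle I expect is Step 1 on the negative half-interval. A naive Abel summation with alternating signs would produce the wrong partial sums; the even/odd split resolves this cleanly. Everything else is a routine maximal inequality followed by a union bound.
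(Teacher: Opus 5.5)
Your proof is correct and follows the paper's argument essentially step for step: summation by parts to reduce to partial sums, a grid of $O(1/\Delta)$ windows of length $2\lceil\Delta N\rceil$, a maximal (reflection/L\'evy) inequality combined with Hoeffding, and a union bound. The only difference is on $[-1,0]$. The paper absorbs the sign into the coefficients, using that $(-1)^k\varepsilon_k$ has the same law as $\varepsilon_k$, and pays a factor $2$ in probability. You instead split into even and odd indices, which gives a pathwise bound at the cost of slightly worse constants; either way works.
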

\begin{proof}
For $m<n$ and $x\in[0,1]$, summation by parts gives
\[
    f_n(x)-f_m(x)
    =(S_n-S_m)x^n+\sum_{k=m+1}^{n-1}(S_k-S_m)(x^k-x^{k+1}),
\]
where $S_k=\sum_{j=0}^k\varepsilon_j$. Since
\[
    x^n+\sum_{k=m+1}^{n-1}(x^k-x^{k+1})=x^{m+1}\le1,
\]
we obtain
\[
    \max_{x\in[0,1]}|f_n(x)-f_m(x)|\le \max_{m\le k\le n}|S_k-S_m|.
\]
The same bound on $[-1,0]$ follows after replacing $\varepsilon_k$ with $(-1)^k\varepsilon_k$, which does not change the law. Therefore, for every $t>0$,
\begin{align*}
\PP\bigg(\max_{\substack{n,m\in[N,2N]\\ |n-m|\le\Delta N}}\max_{x\in[-1,1]} |f_n(x)-f_m(x)|\ge t\bigg)
\le 2\PP\bigg(\max_{\substack{n,m\in[N,2N]\\ |n-m|\le\Delta N}}\max_{m\le k\le n}|S_k-S_m|\ge t\bigg).
\end{align*}
Let $T=\lceil\Delta N\rceil$ and let $I_j=[N+jT,N+(j+2)T]\cap\Z$. Any pair $m,n\in[N,2N]$ with $|n-m|\le\Delta N$ lies in some $I_j$, and there are $O(\Delta^{-1})$ such intervals. By stationarity of the increments,
\[
    \PP\bigg(\max_{\substack{n,m\in[N,2N]\\ |n-m|\le\Delta N}}\max_{m\le k\le n}|S_k-S_m|\ge t\bigg)
    \ll \Delta^{-1}\PP\Big(\max_{0\le r\le2T}|S_r|\ge t/2\Big).
\]
The reflection principle and Hoeffding's inequality then give
\[
    \PP\Big(\max_{0\le r\le2T}|S_r|\ge u\Big)\ll e^{-u^2/(8T)}.
\]
Taking $u=t/2$ with
\[
    t=C\sqrt{\Delta N}\sqrt{\log(1/\Delta)+\log\log N}
\]
and using $T\le2\Delta N$, we find
\[
    \Delta^{-1}\PP\Big(\max_{0\le r\le2T}|S_r|\ge t/2\Big)
    \ll \Delta^{-1}e^{-cC^2(\log(1/\Delta)+\log\log N)}
\]
for an absolute constant $c>0$. Choosing $C$ so that $cC^2\ge4$, the right-hand side is
\[
    \ll \Delta^3\log^{-4}N \ll \log^{-2}N,
\]
which proves \cref{eq:5.5}.
\end{proof}

As a consequence, we may work on a probability-one event on which $f_n$ changes little whenever $n$ varies by at most $\Delta_N N$ inside a dyadic block.
\begin{corollary} \label{cor:5.4}
Let $C>0$ be the absolute constant from \cref{lem:5.3}. Fix $A>0$ and, for each dyadic $N=2^k$, set $\Delta_N=e^{-As_N}$. Then, almost surely, for all sufficiently large dyadic $N$,
\[
    \max_{\substack{n,m\in[N,2N]\\ |n-m|\le\Delta_N N}}\max_{x\in[-1,1]} |f_n(x)-f_m(x)|
    < C\sqrt{\Delta_N N}\sqrt{\log(1/\Delta_N)+\log\log N}.
\]
\end{corollary}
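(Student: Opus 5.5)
The plan is to apply \cref{lem:5.3} at each dyadic scale $N=2^k$ with $\Delta=\Delta_N$, and then finish with the first Borel--Cantelli lemma.

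First we check that $\Delta_N$ lies in the range allowed by \cref{lem:5.3}, namely $[N^{-1},e^{-1}]$, once $N$ is large. Since $s_N=(\log\log N)^{1/3}\to\infty$, we have $As_N\ge1$ for large $N$, which gives $\Delta_N=e^{-As_N}\le e^{-1}$. On the other side, $As_N=A(\log\log N)^{1/3}\le\log N$ for large $N$, so $\Delta_N\ge e^{-\log N}=N^{-1}$. Thus \cref{lem:5.3} applies for all sufficiently large dyadic $N$.

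Let $E_N$ denote the event in \cref{eq:5.5} with $\Delta=\Delta_N$. By \cref{lem:5.3},
\[
    \PP(E_N)\ll\log^{-2}N=(k\log 2)^{-2}
\]
for $N=2^k$. Summing over $k$ gives $\sum_k\PP(E_{2^k})\ll\sum_k k^{-2}<\infty$. By the first Borel--Cantelli lemma, almost surely only finitely many $E_{2^k}$ occur.

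Off $E_N$ the maximum is strictly less than $C\sqrt{\Delta_N N}\sqrt{\log(1/\Delta_N)+\log\log N}$, which is exactly the claimed bound. The only point requiring any care is this change from $\ge$ to $<$ when passing to the complement, and it is immediate. The constant $C$ is the same absolute constant as in \cref{lem:5.3}, so it does not depend on $A$.
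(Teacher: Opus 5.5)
Your proposal is correct and follows essentially the same argument as the paper. You check that $\Delta_N\in[N^{-1},e^{-1}]$ for large $N$ (using $s_N\to\infty$ and $s_N=o(\log N)$), apply \cref{lem:5.3} with $\Delta=\Delta_N$, and conclude by the first Borel--Cantelli lemma using $\sum_k \log^{-2}(2^k)<\infty$.
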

\begin{proof}
Since $s_N\to\infty$ and $s_N=o(\log N)$, we have $\Delta_N\in[N^{-1},e^{-1}]$ for all sufficiently large dyadic $N$. Thus \cref{lem:5.3} applies with $\Delta=\Delta_N$, and the conclusion follows from Borel--Cantelli because
\[
    \sum_{k\ge1}\log^{-2}(2^k)<\infty. \qedhere
\]
\end{proof}

\begin{proposition}\label{prop:5.5}
Almost surely,
\[
    \liminf_{n\to\infty}\frac{\snorm{f_n}_\infty}{\sqrt n\,b_n}\ge1.
\]
\end{proposition}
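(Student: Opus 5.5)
Fix $\eta\in(0,1)$; we will show that almost surely $\snorm{f_n}_\infty\ge(1-\eta)\sqrt n\,b_n$ for all large $n$. Work on dyadic blocks $[N,2N]$, $N=2^k$, and inside each block use the geometric mesh $\mc I_N=\{N+j\lceil\Delta_N N\rceil:0\le j\le\Delta_N^{-1}\}\cap[N,2N]$ with $\Delta_N=e^{-As_N}$, for a large constant $A$ to be chosen. For $n\in[N,2N]$ let $m\in\mc I_N$ be the nearest mesh point, so $|n-m|\le\Delta_N N$. By \cref{cor:5.4}, almost surely for large $N$ we have $\snorm{f_n-f_m}_\infty\le C\sqrt{\Delta_N N}\sqrt{\log(1/\Delta_N)+\log\log N}$. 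This is $\le e^{-As_N/2}O(s_N^{3/2})\sqrt N$. By \cref{lem:5.1}, $b_n\ge e^{-(c_0+o(1))s_N}$ with $c_0=(3\pi^2/4)^{1/3}$. Hence if $A>2c_0+2$, the error is $o(\sqrt n\,b_n)$. By \cref{lem:5.2}, $b_m/b_n=1+o(1)$ uniformly. It therefore suffices to show that almost surely, for all large $N$ and all $m\in\mc I_N$, $\snorm{f_m}_\infty\ge(1-\eta/2)\sqrt m\,b_m$.

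For a fixed $m$, combine \cref{lem:3.1} with \cref{lem:3.4}: outside an event of probability $O(m^{-2})$,
\[
\snorm{f_m}_\infty\ge\sqrt m\max\Big(\sup_t|Y_{t,1}|,\sup_t|Y_{t,2}|\Big)-C\log m .
\]
Since $C\log m=o(\sqrt m\,b_m)$, the event $\snorm{f_m}_\infty<(1-\eta/2)\sqrt m\,b_m$ forces both $\sup|Y_{t,i}|\le(1-\eta/4)b_m$. By independence this has probability at most $F((1-\eta/4)b_m)^2+O(m^{-2})$. Now apply \cref{cor:4.2} with $\tau=-\eta/4$ (shrinking $\eta$ so that $|\tau|\le\tau_0$) and $\rho$ small. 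This gives
\[
F((1-\eta/4)b_m)\le p_m\exp\big(-c\eta\log^2(1/b_m)\big)=p_m e^{-c'\eta s_N^2}.
\]
So the failure probability at $m$ is at most $\log^{-1}m\cdot e^{-2c'\eta s_N^2}+O(m^{-2})$.

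Summing over the mesh, $|\mc I_N|\ll e^{As_N}$, so the block $[N,2N]$ contributes
\[
\ll (k\log 2)^{-1}\exp\big(As_N-2c'\eta s_N^2\big)+N^{-1}.
\]
Since $s_N^2$ dominates $As_N$, this is $\ll k^{-1}e^{-c''\eta(\log k)^{2/3}}$ for large $k$. The main obstacle is this summability over $k=\log_2 N$. The squared factor $p_m^2=\log^{-1}m\asymp k^{-1}$ alone is only borderline (harmonic) and not summable. The extra gain $\exp(-c\eta(\log k)^{2/3})$ from \cref{cor:4.2} is what saves us, and it is genuinely needed. It beats the mesh cardinality $e^{As_N}=e^{A(\log k)^{1/3}}$, and $\sum_k k^{-1}e^{-c(\log k)^{2/3}}<\infty$ (substitute $u=\log k$: $\int e^{-cu^{2/3}}du<\infty$). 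Borel--Cantelli then yields the claim for all mesh points of all large blocks, and combining with the first paragraph, then letting $\eta\downarrow0$ along a countable sequence, proves the proposition.
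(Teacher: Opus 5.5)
Your proposal is correct and follows the paper's proof essentially step for step: dyadic blocks with a mesh of spacing about $\Delta_N N$, $\Delta_N=e^{-As_N}$; interpolation via \cref{cor:5.4} and \cref{lem:5.2}; the coupling of \cref{lem:3.4} at mesh points; and the extra factor $\exp(-c_\eta\log^2(1/b_m))$ from the continuity estimates of Section 4, which beats both the mesh cardinality $e^{As_N}$ and the borderline factor $p_m^2=\log^{-1}m$. The differences are cosmetic: your mesh is arithmetic rather than geometric, you invoke \cref{cor:4.2} instead of \cref{prop:4.1}, and you check $\sum_k k^{-1}e^{-c(\log k)^{2/3}}<\infty$ explicitly where the paper only asserts the summability.
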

\begin{proof}
Fix $\eta\in(0,1)$, let $C>0$ be the constant from \cref{cor:5.4}, and set
\[
    \mc A_n=\{\snorm{f_n}_\infty\le(1-\eta)\sqrt n\,b_n\}.
\]
We will show that $\mc A_n$ occurs only finitely often almost surely.

Choose $A>2(3\pi^2/4)^{1/3}$. For each dyadic $N=2^k$, define
\[
    \Delta_N=e^{-As_N},
    \qquad
    E_N=C\sqrt{\Delta_N N}\sqrt{\log(1/\Delta_N)+\log\log N}.
\]
Let $\mc I_N=\{m_0,\dots,m_{j_N}\}\subset[N,2N]$ be the geometric mesh defined by $m_0=N$ and $m_{j+1}=\lfloor(1+\Delta_N)m_j\rfloor$ until the first index exceeding $2N$, and adjoin $2N$ if necessary. For large $N$, we have $m_{j+1}\ge(1+\Delta_N/2)m_j$, so $|\mc I_N|\ll\Delta_N^{-1}$. Also let
\[
    \mc M_N=
    \Big\{\max_{\substack{m,n\in[N,2N]\\ |m-n|\le\Delta_N N}} \snorm{f_n-f_m}_\infty < E_N\Big\}.
\]
By \cref{cor:5.4}, almost surely $\mc M_N$ holds for all sufficiently large dyadic $N$.

We first reduce the problem from the whole block $[N,2N]$ to the mesh $\mc I_N$. If $m,n\in[N,2N]$ and $|m-n|\le\Delta_N N$, then
\[
    \Big|\sqrt{n/m}-1\Big|\le\Delta_N,
\]
while \cref{lem:5.2} gives
\[
    b_n\le\Big(1+C\frac{\Delta_N}{(\log N)(\log\log N)^{2/3}}\Big)b_m
\]
for an absolute constant $C$. Since $\Delta_N\to0$, for large $N$ this implies
\[
    \sqrt n\,b_n\le(1+\Delta_N)^2\sqrt m\,b_m.
\]
Consequently,
\[
    (1-\eta)\sqrt n\,b_n\le(1-3\eta/4)\sqrt m\,b_m.
\]
Moreover, \cref{lem:5.1} gives
\[
    b_N\ge \exp\Big(-\Big((3\pi^2/4)^{1/3}+o(1)\Big)s_N\Big),
\]
so
\[
    \frac{E_N}{\sqrt N\,b_N}
    \le C\exp\Big(-\Big(\frac{A}{2}-(3\pi^2/4)^{1/3}+o(1)\Big)s_N\Big)\sqrt{As_N+s_N^3}
    \to0.
\]
This is where the choice of $A$ is used. Since $b_{2N}/b_N\to1$ by \cref{cor:4.3}, it follows that
\[
    E_N\le(\eta/4)\sqrt m\,b_m
\]
uniformly for $m\in[N,2N]$, once $N$ is large enough. Consequently, if $\mc A_n\cap\mc M_N$ occurs for some $n\in[N,2N]$ and $m\in\mc I_N$ satisfies $|m-n|\le\Delta_N N$, then
\[
    \snorm{f_m}_\infty \le \snorm{f_n}_\infty+\snorm{f_m-f_n}_\infty
    \le \Big(1-\frac{\eta}{2}\Big)\sqrt m\,b_m.
\]
Such an $m$ always exists for large $N$: every $n\in[N,2N]$ lies between two consecutive mesh points. If $m_j,m_{j+1}\in\mc I_N$ are consecutive and $m_{j+1}<2N$, then
\[
    m_{j+1}-m_j=\lfloor(1+\Delta_N)m_j\rfloor-m_j\le\Delta_Nm_j\le2\Delta_NN.
\]
If the upper mesh point is $2N$, then
\[
    2N-m_j<\lfloor(1+\Delta_N)m_j\rfloor-m_j\le\Delta_Nm_j\le2\Delta_NN.
\]
Hence every mesh gap is $<2\Delta_NN$, so choosing the nearer endpoint gives $|m-n|\le\Delta_NN$.
Therefore,
\[
    \bigcup_{n\in[N,2N]} (\mc A_n\cap\mc M_N)
    \subset \bigcup_{m\in\mc I_N} \mc B_m,
    \qquad
    \mc B_m=\Big\{\snorm{f_m}_\infty\le\Big(1-\frac{\eta}{2}\Big)\sqrt m\,b_m\Big\}.
\]

It remains to estimate the mesh events $\mc B_m$. Fix $m\in\mc I_N$, and choose a coupling as in \cref{lem:3.4}. Let $\mc E_m$ be the event that
\[
    \sup_{t\ge0}|f_{m,t}^{+}-Y^{m,+}(t)| + \sup_{t\ge0}|f_{m,t}^{-}-Y^{m,-}(t)| \le \frac{\eta}{4}b_m,
\]
where $(Y^{m,+}(t))_{t\ge0}$ and $(Y^{m,-}(t))_{t\ge0}$ are independent copies of $Y$. Since $\log(1/b_m)\asymp s_m$ by \cref{lem:5.1}, the estimate in \cref{lem:3.4} implies that
\[
    \PP(\mc E_m^c)=O(m^{-2}).
\]
On $\mc B_m\cap\mc E_m$, \cref{lem:3.1} gives
\[
    \sup_{t\ge0}|f_{m,t}^{\pm}|\le\Big(1-\frac{\eta}{2}\Big)b_m,
\]
and hence
\[
    \sup_{t\ge0}|Y^{m,\pm}(t)|\le\Big(1-\frac{\eta}{4}\Big)b_m.
\]
Thus
\[
    \mc B_m\cap\mc E_m\subset \mc H_m,
\]
where
\[
    \mc H_m=
    \Big\{
        \sup_{t\ge0}|Y^{m,+}(t)|\le\Big(1-\frac{\eta}{4}\Big)b_m,
        \ \sup_{t\ge0}|Y^{m,-}(t)|\le\Big(1-\frac{\eta}{4}\Big)b_m
    \Big\}.
\]
Therefore
\[
    \PP(\mc B_m)\le \PP(\mc H_m)+\PP(\mc E_m^c).
\]
Since the two Gaussian copies are independent, \cref{prop:4.1} yields
\[
    \PP(\mc H_m)
    = F\Big(\Big(1-\frac{\eta}{4}\Big)b_m\Big)^2
    \le F(b_m)^2\exp\big(-c_\eta\log^2(1/b_m)\big)
\]
for some $c_\eta>0$. Now $F(b_m)=p_m$, while $m\in[N,2N]$ implies $\log m\asymp\log N$ and $s_m\asymp s_N$. Hence
\[
    \sum_{m\in\mc I_N}\PP(\mc B_m)
    \ll |\mc I_N|\frac{1}{\log N}\exp(-c_\eta' s_N^2)+\sum_{m\in\mc I_N}m^{-2}
    \ll \frac{1}{\log N}\exp(As_N-c_\eta' s_N^2)+\Delta_N^{-1}N^{-2}.
\]
Both terms are summable over dyadic $N=2^k$. By Borel--Cantelli, almost surely the events $\mc B_m$ occur only finitely often over the union of all dyadic meshes. Since $\mc M_N$ also holds for all sufficiently large dyadic $N$, the preceding inclusion shows that $\mc A_n$ occurs only finitely often almost surely. This proves the proposition.
\end{proof}

\subsection{\texorpdfstring{The upper bound $\liminf\le1$}{The upper bound liminf <= 1}}\label{subsec:5.3}
For the upper bound, we pass to a sparse sequence $N_j$ and write
\[
    f_{N_{j+1}}(x)=f_{N_j}(x)+x^{N_j+1}g_j(x).
\]
The fresh block $g_j$ is an independent Littlewood polynomial, so the Gaussian small-ball estimate and Borel--Cantelli show that it is small infinitely often almost surely, while the old part $f_{N_j}$ is negligible on the larger scale $\sqrt{N_{j+1}}\,b_{N_{j+1}}$.

\begin{proposition} \label{prop:5.6}
Almost surely,
\begin{equation} \label{eq:5.6}
    \liminf_{n\to\infty}\frac{\snorm{f_n}_\infty}{\sqrt n\,b_n}\le1.
\end{equation}
\end{proposition}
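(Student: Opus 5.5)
Fix $\eta\in(0,1)$; it suffices to show that almost surely $\snorm{f_{N_{j+1}}}_\infty\le(1+\eta)\sqrt{N_{j+1}}\,b_{N_{j+1}}$ for infinitely many $j$, along a sparse sequence. I would take $N_j=\lfloor \exp(j^{1+\kappa})\rfloor$ for a small $\kappa>0$, or $N_j=j^{j}$. Any choice works provided (i) $N_j/N_{j+1}\to0$ fast enough that the old part is negligible, and (ii) $\sum_j p_{N_{j+1}}^{1-o(1)}=\sum_j(\log N_{j+1})^{-1/2+o(1)}$ diverges. With $\log N_j\asymp j\log j$ for $N_j=j^j$, the sum is $\sum_j (j\log j)^{-1/2-o(1)}=\infty$, and $N_j/N_{j+1}\asymp 1/(ej)$. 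Write $f_{N_{j+1}}(x)=f_{N_j}(x)+x^{N_j+1}g_j(x)$, where $g_j$ is a Littlewood polynomial of degree $M_j=N_{j+1}-N_j-1$ built from the fresh coefficients. The $g_j$ are independent across $j$ and independent of $f_{N_j}$.

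\textbf{Old part.} By \eqref{eq:1.1}, almost surely $\snorm{f_{N_j}}_\infty\ll\sqrt{N_j\log\log N_j}$ eventually. By \cref{lem:5.1}, $b_{N_{j+1}}=\exp(-O(s_{N_{j+1}}))$, so
\[
\frac{\sqrt{N_j\log\log N_j}}{\sqrt{N_{j+1}}\,b_{N_{j+1}}}\ll j^{-1/2}\sqrt{\log j}\,e^{O((\log j)^{1/3})}\to0 .
\]
Hence the old part is at most $(\eta/3)\sqrt{N_{j+1}}\,b_{N_{j+1}}$ eventually. Also $\snorm{x^{N_j+1}g_j}_\infty\le\snorm{g_j}_\infty$.

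\textbf{Fresh block.} Let $\mc G_j=\{\snorm{g_j}_\infty\le(1+\eta/3)\sqrt{M_j}\,b_{M_j}\}$. Apply \cref{lem:3.4} to $g_j$ with independent copies $Y^{j,\pm}$. Using \cref{lem:3.1}, and since $|g_j(0)|=1$ is negligible, we have
\[
\mc G_j\supset\Big\{\sup_t|Y^{j,\pm}_t|\le(1+\eta/6)b_{M_j}\Big\}\setminus\mc E_j^c ,
\]
where $\PP(\mc E_j^c)\ll M_j^{-2}$; the coupling error $C\log M_j/\sqrt{M_j}$ is much smaller than $\eta b_{M_j}$. By independence and \cref{prop:4.1},
\[
\PP(\mc G_j)\ge F\big((1+\eta/6)b_{M_j}\big)^2-O(M_j^{-2})\ge p_{M_j}^2\,e^{c_\eta\log^2(1/b_{M_j})}-O(M_j^{-2}).
\]

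\textbf{Main obstacle.} The two independent copies give $p^2=(\log M_j)^{-1}$, which is not summable against a sparse sequence. The gain factor $\exp(c_\eta s^2)$ with $s=(\log\log M_j)^{1/3}$ is only $(\log M)^{o(1)}$. So divergence needs $\sum_j(\log N_j)^{-1+o(1)}=\infty$, which forces $\log N_j\lesssim j^{1+o(1)}$. For instance, with $N_j=\lfloor e^{j\log^{2}j}\rfloor$ we have $N_j/N_{j+1}=\exp(-\Theta(\log^2 j))$, and the old-part ratio becomes $\exp(-\Theta(\log^2 j)+O((\log j)^{2/3}))\to0$. The fresh-block sum is
\[
\sum_j\frac{e^{c\,(\log\log N_j)^{2/3}}}{j\log^2 j},
\]
and since $(\log\log N_j)^{2/3}\approx(\log j)^{2/3}$ dominates $2\log\log j$, the sum diverges. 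I expect this bookkeeping, namely picking a sequence sparse enough to kill the old part yet dense enough that the $e^{c_\eta s^2}$ gain rescues divergence, to be the delicate step. One must also check that $M_j\sim N_{j+1}$ so that $\sqrt{M_j}\,b_{M_j}\le(1+\eta/6)\sqrt{N_{j+1}}\,b_{N_{j+1}}$; this follows from \cref{cor:4.3} since $p_{M_j}/p_{N_{j+1}}\to1$.

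\textbf{Conclusion.} By the second Borel--Cantelli lemma, using independence of the $\mc G_j$, $\mc G_j$ occurs infinitely often almost surely. On these $j$, combining with the old-part bound gives $\snorm{f_{N_{j+1}}}_\infty\le(1+\eta)\sqrt{N_{j+1}}\,b_{N_{j+1}}$. Letting $\eta\downarrow0$ along a countable sequence proves \eqref{eq:5.6}.
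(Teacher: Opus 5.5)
Your final argument is correct and has the same skeleton as the paper's proof:
\begin{itemize}
\item the splitting $f_{N_{j+1}}=f_{N_j}+x^{N_j+1}g_j$;
\item the coupling of \cref{lem:3.4} applied to the fresh block;
\item independence of the blocks and the second Borel--Cantelli lemma;
\item the law of the iterated logarithm \cref{eq:1.1} for the old part;
\item \cref{cor:4.3} to pass from $b_{M_j}$ to $b_{N_{j+1}}$.
\end{itemize}
The difference is only in the bookkeeping. The paper takes $N_j=\lceil e^{Aj\log^{1/3}(j+1)}\rceil$ with $A>2(3\pi^2/4)^{1/3}$. This is tuned so that $\sqrt{N_j/N_{j+1}}\approx e^{-(A/2)\log^{1/3}j}$ just beats $b_{N_{j+1}}^{-1}\approx e^{(3\pi^2/4)^{1/3}\log^{1/3}j}$. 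At the same time $\log M_j\asymp j\log^{1/3}j$ keeps $\sum_j F(b_{M_j})^2=\sum_j 1/\log M_j$ divergent, so the paper uses the Gaussian event at level exactly $b_{M_j}$ and needs no gain. You take the much sparser $N_j=\lfloor e^{j\log^2 j}\rfloor$. This makes the old part trivially negligible but makes $\sum_j 1/\log M_j$ converge. You then restore divergence with the factor $e^{c_\eta\log^2(1/b_{M_j})}=e^{\Theta((\log j)^{2/3})}$ that \cref{prop:4.1} supplies at level $(1+\eta/6)b_{M_j}$. That is valid, and it shows $\log N_j$ may exceed $j$ by an arbitrary polylogarithmic factor. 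The cost is that you invoke the quantitative continuity of \cref{prop:4.1} directly, whereas the paper needs it here only through \cref{cor:4.3}.

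Two points to clean up. First, the ``main obstacle'' you identify is not real. For your first candidate $N_j=j^j$ one has $\log M_j\asymp j\log j$ and $\sum_j 1/(j\log j)=\infty$, so the gain-free argument already works there. Your old-part computation was in fact carried out for that sequence, not the one you end with. Second, your condition (ii) is misstated: two independent copies give $p^2$, not $p^{1-o(1)}$. Your other candidate $N_j=\lfloor\exp(j^{1+\kappa})\rfloor$ genuinely fails, since the gain is only $j^{o(1)}$ and $\sum_j j^{-1-\kappa+o(1)}<\infty$. In the write-up, fix one sequence at the outset and check both the old-part estimate and the divergence of $\sum_j\PP(\mathcal{G}_j)$ for that sequence.
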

\begin{proof}
Let $\eta>0$. Choose $A>2(3\pi^2/4)^{1/3}$. For $j\ge1$, set
\[
    N_j=\lceil e^{Aj\log^{1/3}(j+1)}\rceil,
    \qquad
    M_j=N_{j+1}-N_j-1.
\]
Let
\[
    g_j(x)=\sum_{k=0}^{M_j}\varepsilon_{N_j+1+k}x^k.
\]
Then $g_j$ is a degree-$M_j$ Littlewood polynomial depending only on the block $(N_j,N_{j+1}]$, and
\begin{equation} \label{eq:5.7}
    f_{N_{j+1}}(x)=f_{N_j}(x)+x^{N_j+1}g_j(x).
\end{equation}
Let
\[
    \mc A_j' = \{\snorm{g_j}_\infty\le(1+\eta/4)\sqrt{M_j}\,b_{M_j}\}.
\]
We first show that these events occur infinitely often almost surely. For $t\ge0$, set
\[
    g_{j,t}^{+}=M_j^{-1/2}g_j(e^{-t/M_j}),
    \qquad
    g_{j,t}^{-}=M_j^{-1/2}g_j(-e^{-t/M_j}).
\]
By the analogue of \cref{lem:3.1},
\[
    \snorm{g_j}_\infty
    = \max\Big\{1,\ \sqrt{M_j}\sup_{t\ge0}|g_{j,t}^{+}|,\ \sqrt{M_j}\sup_{t\ge0}|g_{j,t}^{-}|\Big\}.
\]
Since $g_j$ has the same law as a degree-$M_j$ Littlewood polynomial, the proof of \cref{lem:3.4}, together with the quantitative estimate in \cref{lem:3.2}, gives a coupling with independent copies $(Y_{j,t}^{+})_{t\ge0}$ and $(Y_{j,t}^{-})_{t\ge0}$ of $Y$ such that
\[
    \PP\Big(\sup_{t\ge0}|g_{j,t}^{+}-Y_{j,t}^{+}| + \sup_{t\ge0}|g_{j,t}^{-}-Y_{j,t}^{-}| > C\frac{\log M_j}{\sqrt{M_j}}\Big)\ll M_j^{-2}
\]
for some absolute constant $C>0$. By \cref{lem:5.1}, $(\log M_j)/\sqrt{M_j}=o(b_{M_j})$, so for all sufficiently large $j$ the exceptional event has probability $O(M_j^{-2})$ even with threshold $(\eta/4)b_{M_j}$. Let $\mc E_j$ denote the complementary good event. Also let $\mc H_j$ be the event that both $\sup_{t\ge0}|Y_{j,t}^{+}|$ and $\sup_{t\ge0}|Y_{j,t}^{-}|$ are at most $b_{M_j}$. On $\mc H_j\cap\mc E_j$ we have $\sup_{t\ge0}|g_{j,t}^{\pm}|\le(1+\eta/4)b_{M_j}$, and since $\sqrt{M_j}b_{M_j}\to\infty$, the identity above implies $\mc H_j\cap\mc E_j\subset\mc A_j'$ eventually. Hence
\[
    \PP(\mc A_j')\ge \PP(\mc H_j)-\PP(\mc E_j^c)=F(b_{M_j})^2-O(M_j^{-2})=\frac{1}{\log M_j}-O(M_j^{-2}).
\]
In particular, $\PP(\mc A_j')\ge(2\log M_j)^{-1}$ for all sufficiently large $j$. Since $N_j/N_{j+1}\to0$, we have $M_j\sim N_{j+1}$ and therefore $\log M_j\asymp j\log^{1/3}j$, so
\[
    \sum_{j\ge1}\PP(\mc A_j')=\infty.
\]
The events $\mc A_j'$ are independent because they depend on disjoint coefficient blocks, and hence the second Borel--Cantelli lemma implies that $\mc A_j'$ occurs infinitely often almost surely.

We next show that the old part is negligible:
\begin{equation} \label{eq:5.8}
    \frac{\snorm{f_{N_j}}_\infty}{\sqrt{N_{j+1}}\,b_{N_{j+1}}}\to0
    \qquad \text{almost surely.}
\end{equation}
Indeed, by \cref{eq:1.1}, almost surely,
\[
    \frac{\snorm{f_{N_j}}_\infty}{\sqrt{N_{j+1}}\,b_{N_{j+1}}}
    \ll \sqrt{\frac{N_j}{N_{j+1}}}\,\frac{\sqrt{\log\log N_j}}{b_{N_{j+1}}}.
\]
Now $\log\log N_j=\log j+O(\log\log j)$, so $\sqrt{\log\log N_j}=\exp(o(\log^{1/3}j))$. Also,
\[
    \sqrt{\frac{N_j}{N_{j+1}}}=\exp\big(-(A/2+o(1))\log^{1/3}j\big),
\]
while \cref{lem:5.1} gives
\[
    b_{N_{j+1}}^{-1}
    = \exp\Big(\Big((3\pi^2/4)^{1/3}+o(1)\Big)\log^{1/3}j\Big).
\]
Therefore,
\[
    \frac{\snorm{f_{N_j}}_\infty}{\sqrt{N_{j+1}}\,b_{N_{j+1}}}
    \ll \exp\Big(-\Big(\frac{A}{2}-(3\pi^2/4)^{1/3}+o(1)\Big)\log^{1/3}j\Big)\to0,
\]
since $A/2>(3\pi^2/4)^{1/3}$. This proves \cref{eq:5.8}.

Also $M_j/N_{j+1}\to1$, so $\sqrt{M_j/N_{j+1}}\to1$. Applying \cref{cor:4.3} with $y_j=p_{N_{j+1}}$ and $\sigma_j=\log(p_{M_j}/p_{N_{j+1}})$, we have $\sigma_j\to0$ and
\[
    |\sigma_j|=
    \frac12\Big|\log\frac{\log N_{j+1}}{\log M_j}\Big|
    = o\big((\log\log N_{j+1})^{2/3}\big)
    \asymp o\big(\log^2(1/b_{N_{j+1}})\big),
\]
so
\begin{equation} \label{eq:5.9}
    \frac{b_{M_j}}{b_{N_{j+1}}}=1+o(1).
\end{equation}

On the probability-one event where $\mc A_j'$ occurs infinitely often and \cref{eq:5.8} holds, we obtain from \cref{eq:5.7,eq:5.9} that for infinitely many $j$,
\begin{align*}
    \snorm{f_{N_{j+1}}}_\infty
    &\le \snorm{f_{N_j}}_\infty+\snorm{g_j}_\infty\\
    &\le o\big(\sqrt{N_{j+1}}\,b_{N_{j+1}}\big)+(1+\eta/4)\sqrt{M_j}\,b_{M_j}\\
    &= \big(1+\eta/4+o(1)\big)\sqrt{N_{j+1}}\,b_{N_{j+1}}.
\end{align*}
Hence, almost surely,
\[
    \liminf_{n\to\infty}\frac{\snorm{f_n}_\infty}{\sqrt n\,b_n}\le1+\eta.
\]
Since $\eta>0$ was arbitrary, \cref{eq:5.6} follows.
\end{proof}

\begin{proof}[Proof of \cref{thm:1.1}]
The lower bound is precisely \cref{prop:5.5}, and the upper bound is precisely \cref{prop:5.6}. Combining them proves \cref{thm:1.1}.
\end{proof}
\appendix
\section{\texorpdfstring{Proof of \cref{lem:2.6}: calculating the $L^2$ small-ball probability}{Proof of Lemma 2.6: calculating the L2 small-ball probability}} \label{app:A}
In this appendix we prove \cref{lem:2.6} by counting the eigenvalues of a covariance operator. Let $X$ be as in \cref{eq:2.2}, and set
\[
    I=\int_0^{\infty} e^{-t}X_t^2\,dt.
\]
Let $T$ be the covariance operator of $e^{-t/2}X_t$. Thus $T\colon L^2(0,\infty)\to L^2(0,\infty)$ is given by
\begin{equation} \label{eq:A.1}
    (Tf)(s)=\int_0^{\infty} e^{-(s+t)/2}K(s-t)f(t)\,dt,
\end{equation}
where $K$ is the covariance kernel from \cref{eq:2.3}. The operator $T$ is compact, self-adjoint, positive, and trace class; moreover, $\Tr(T)=1/2$ by a direct computation. Writing $\lambda_1\ge\lambda_2\ge\cdots\ge0$ for its eigenvalues and choosing an orthonormal basis of eigenvectors, the Karhunen--Lo\`eve expansion gives, almost surely,
\begin{equation} \label{eq:A.2}
    I=\sum_{k\ge1} \lambda_k \xi_k^2,
\end{equation}
where the $\xi_k$ are i.i.d.\ standard Gaussian random variables.

It therefore remains to understand the eigenvalues of $T$. If $A$ is compact, write $s_1(A)\ge s_2(A)\ge\cdots\ge0$ for its singular values, and if $A$ is compact and positive, write $\lambda_1(A)\ge\lambda_2(A)\ge\cdots\ge0$ for its eigenvalues. For $\tau>0$, define
\begin{equation} \label{eq:A.3}
    N(\tau,A)=\#\{k\ge1:s_k(A)\ge\tau\},
    \qquad
    \Lambda(\tau,A)=\#\{k\ge1:\lambda_k(A)\ge\tau\}.
\end{equation}
If $A$ is positive then $N(\tau,A)=\Lambda(\tau,A)$. In particular, $\Lambda(\tau,T)$ counts the eigenvalues of $T$ above the threshold $\tau$. The main input is the quantitative estimate for $\Lambda(\tau,T)$ in \cref{prop:A.4}, proved by combining Laptev's block decomposition \cite{Laptev74} with the estimate from \cite{KRD21} recorded in \cref{lem:A.2}. We then deduce \cref{lem:2.6}.

\subsection{Counting the eigenvalues of \texorpdfstring{$T$}{T}}\label{subsec:A.1}
\begin{lemma} \label{lem:A.1}
The operator $T$ is unitarily equivalent to the operator $\widetilde T$ on $L^2(0,\infty)$ given by
\[
    (\widetilde T f)(s)=\int_0^{\infty} \frac{2}{e^{2s}+e^{2t}}f(t)\,dt.
\]
\end{lemma}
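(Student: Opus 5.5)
The plan is to undo the exponential change of variables and compose with explicit unitaries. First I will compute the kernel of $T$ explicitly. Since $K(u)=\tfrac12\operatorname{sech}(u/2)=1/(e^{u/2}+e^{-u/2})$, the kernel is
\[
    e^{-(s+t)/2}K(s-t)=\frac{e^{-(s+t)/2}}{e^{(s-t)/2}+e^{(t-s)/2}}=\frac{1}{e^{s}+e^{t}}.
\]
So $T$ has kernel $(e^s+e^t)^{-1}$ on $L^2(0,\infty)$. This also agrees with the origin of $X$: the kernel is $\E[\wt Y_{e^s}\wt Y_{e^t}]\,e^{(s+t)/2}e^{-(s+t)/2}$, where $\E[\wt Y_a\wt Y_b]=1/(a+b)$.

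Next I will apply the dilation unitary $(Uf)(s)=\sqrt2\,f(2s)$ on $L^2(0,\infty)$, which is unitary since $\int|{\sqrt2}f(2s)|^2\,ds=\int|f|^2$. Conjugating gives
\[
    (UTU^{-1}g)(s)=\sqrt2\int_0^\infty\frac{1}{e^{2s}+e^{t}}\,\frac{1}{\sqrt2}\,g(t/2)\,dt.
\]
Substituting $t=2t'$ turns this into
\[
    \int_0^\infty \frac{2}{e^{2s}+e^{2t'}}\,g(t')\,dt',
\]
which is exactly $\widetilde T$. I will double-check the normalisation, $U^{-1}g(t)=2^{-1/2}g(t/2)$, and the Jacobian factor $2$, since these produce precisely the constant $2$ in the kernel of $\widetilde T$.

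There is no real obstacle here beyond bookkeeping of constants. I expect the only care needed is verifying the $\operatorname{sech}$ simplification and the scaling factors; both are direct.
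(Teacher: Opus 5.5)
Your proposal is correct and follows the paper's approach: your dilation $(Uf)(s)=\sqrt2\,f(2s)$ is the inverse of the paper's $U$, so your $UTU^{-1}$ is exactly the paper's $U^{-1}TU$. You also spell out the simplification $e^{-(s+t)/2}K(s-t)=(e^s+e^t)^{-1}$ that the paper calls a ``direct computation'', and your constants check out.
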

\begin{proof}
Let $U\colon L^2(0,\infty)\to L^2(0,\infty)$ be the unitary dilation $(Uf)(s)=2^{-1/2}f(s/2)$. A direct computation in \cref{eq:A.1}, followed by the change of variables $t\mapsto2t$, gives
\[
    (U^{-1}TUf)(s)=\int_0^{\infty} \frac{2}{e^{2s}+e^{2t}}f(t)\,dt,
\]
which proves the claim.
\end{proof}

For $u,a>0$, let $\mc G_{u,a}\colon L^2(0,a)\to L^2(0,a)$ be the compact operator
\begin{equation} \label{eq:A.4}
    (\mc G_{u,a}f)(x)=\int_0^a \frac{\sin(u(x-y))}{\pi(x-y)}f(y)\,dy.
\end{equation}
Since $\sin(uz)/(\pi z)$ is the inverse Fourier transform of $\one_{[-u,u]}$, Plancherel expresses the quadratic form of $\mc G_{u,a}$ in terms of the Fourier transform of $f\one_{(0,a)}$. In particular, for all $f\in L^2(0,a)$,
\[
    \langle \mc G_{u,a}f,f\rangle
    =\frac{1}{2\pi}\int_{-u}^u \Big|\int_0^a e^{-i\xi x}f(x)\,dx\Big|^2\,d\xi\ge0,
\]
so $\mc G_{u,a}$ is positive. The next lemma estimates $\Lambda(\tau,\mc G_{u,a})$.

\begin{lemma} \label{lem:A.2}
For $\tau\in(0,1/4)$,
\[
    \Lambda(\tau,\mc G_{u,a})=\frac{au}{\pi}+O\big(\log(au+e)\log(1/\tau)\big).
\]
\end{lemma}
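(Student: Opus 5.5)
The plan is to reduce \cref{lem:A.2} to the classical time--frequency limiting (prolate spheroidal) operator and then use two facts: an exact trace identity, and the sharp ``plunge region'' bound of \cite{KRD21}. The plan has three steps.

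\textbf{Step 1: reduce to one parameter.} The dilation $f\mapsto a^{-1/2}f(\cdot/a)$ is unitary from $L^2(0,a)$ onto $L^2(0,1)$. It conjugates $\mc G_{u,a}$ to $\mc G_{au,1}$, because
\[
\frac{\sin(u a(x-y))}{\pi a(x-y)}\,a\,dy=\frac{\sin(au(x-y))}{\pi(x-y)}\,dy .
\]
So the spectrum depends only on $c=au$, and we may take $a=1$, $u=c$.

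From the Plancherel formula for $\langle\mc G_{u,a}f,f\rangle$ we get $0\le\mc G_{u,a}\le \Id$. Hence all eigenvalues $\lambda_k$ lie in $[0,1]$. The trace is the kernel on the diagonal integrated, $\Tr\,\mc G_{c,1}=\int_0^1 c/\pi\,dx=c/\pi$.

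We also record the Hilbert--Schmidt identity
\[
\Tr\,\mc G-\Tr\,\mc G^2=\sum_k\lambda_k(1-\lambda_k)=\int_0^1\!\!\int_{\R\setminus(0,1)}\frac{\sin^2(c(x-y))}{\pi^2(x-y)^2}\,dy\,dx .
\]
Bounding $\sin^2$ by $\min(1,c^2(x-y)^2)$ shows the right-hand side is $O(\log(c+e))$.

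\textbf{Step 2: split at $1/2$.} Since $\lambda\le 2\lambda(1-\lambda)$ when $\lambda<1/2$, and $1-\lambda\le2\lambda(1-\lambda)$ when $\lambda\ge1/2$, we get
\[
\Big|\Tr\,\mc G-\#\{k:\lambda_k\ge1/2\}\Big|\le\sum_{\lambda_k\ge1/2}(1-\lambda_k)+\sum_{\lambda_k<1/2}\lambda_k\le 2\sum_k\lambda_k(1-\lambda_k)=O(\log(c+e)).
\]
Therefore $\#\{\lambda_k\ge1/2\}=c/\pi+O(\log(c+e))$. Then
\[
\Lambda(\tau,\mc G)=\#\{\lambda_k\ge1/2\}+\#\{\tau\le\lambda_k<1/2\},
\]
so it remains to show that the middle count is $O(\log(c+e)\log(1/\tau))$.

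\textbf{Step 3: the plunge region, which is the main obstacle.} The crude Chebyshev bound from Step 1 only gives $O(\log(c+e)/\tau)$, which is too weak. Here I would invoke the main theorem of \cite{KRD21} (Karnik--Romberg--Davenport). For the time--frequency limiting operator with time--bandwidth parameter $c$, it states that
\[
\#\{k:\epsilon<\lambda_k<1-\epsilon\}\ll \log(c+e)\log(1/\epsilon)\qquad\text{for }\epsilon\in(0,1/2).
\]
We apply it with $\epsilon=\tau$. The only care needed is to match normalizations: their operator is stated with interval length and bandlimit, and by Step 1 it corresponds to our $c=au$ up to a fixed constant factor. Such a factor is harmless inside $\log(c+e)$.

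Combining the three steps gives
\[
\Lambda(\tau,\mc G_{u,a})=au/\pi+O(\log(au+e)\log(1/\tau)),
\]
which is the claim. If one instead wanted a self-contained proof of the plunge bound, the natural first attempt would be the Landau--Widom/Slepian commuting differential operator. That route is considerably longer, and I would defer to \cite{KRD21} as the paper indicates.
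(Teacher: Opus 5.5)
Your proof is correct. It follows a somewhat different route from the paper's in how the main term $au/\pi$ is located.

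\textbf{The paper's route.} The paper rescales to the standard prolate operator with $c=au/2$ and reads the count directly off the two-sided exponential eigenvalue bounds of \cite[Corollary~3]{KRD21}. Above $2c/\pi$, $\lambda_k$ decays like $\exp(-\Omega((k-2c/\pi)/\log(c+e)))$, and below it $1-\lambda_k$ decays similarly. Hence $\lambda_k<\tau$ once $k\ge 2c/\pi+O(\log(c+e)\log(1/\tau))$, and $\lambda_k>\tau$ for $k\le 2c/\pi-O(\log(c+e))$.

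\textbf{Your route.} You locate the transition yourself by the Landau--Widom trace and Hilbert--Schmidt computation: $\Tr\mc G=c/\pi$ and $\sum_k\lambda_k(1-\lambda_k)=O(\log(c+e))$, hence $\#\{\lambda_k\ge1/2\}=c/\pi+O(\log(c+e))$. You then borrow from \cite{KRD21} only the plunge-region width bound $\#\{\epsilon<\lambda_k<1-\epsilon\}\ll\log(c+e)\log(1/\epsilon)$.

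\textbf{What each buys.} Your version uses less from the reference and shows transparently that the main term is just the trace. The splitting at $1/2$ and the identity $\Tr\mc G-\Tr\mc G^2=\int_0^1\int_{\R\setminus(0,1)}|k(x-y)|^2\,dy\,dx$ are both checked correctly. The paper's version is a one-line citation that takes the location and the decay of the eigenvalues together from the corollary.

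\textbf{Two cosmetic points.}
\begin{enumerate}
\item The map $f\mapsto a^{-1/2}f(\cdot/a)$ sends $L^2(0,1)$ onto $L^2(0,a)$, not the reverse. Your conjugation computation giving $\mc G_{u,a}\cong\mc G_{au,1}$ is right regardless.
\item $\Lambda$ counts $\lambda_k\ge\tau$, while the plunge bound counts $\epsilon<\lambda_k<1-\epsilon$. So apply it with $\epsilon=\tau/2$ rather than $\epsilon=\tau$; this leaves the $O(\log(au+e)\log(1/\tau))$ error unchanged.
\end{enumerate}
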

\begin{proof}
Let $c=au/2$. After translating $(0,a)$ to $(-a/2,a/2)$, the operator $\mc G_{u,a}$ is exactly the operator treated in \cite[Theorem~3 and Corollary~3]{KRD21}. If $\lambda_1(c)\ge\lambda_2(c)\ge\cdots\ge0$ are its eigenvalues, then \cite[Corollary~3]{KRD21} gives
\begin{align*}
    \lambda_k(c)
    &\ll \exp\Big(-\Omega\Big(\frac{k-2c/\pi}{\log(c+e)}\Big)\Big)
    && (k\ge2c/\pi),\\
    1-\lambda_k(c)
    &\ll \exp\Big(-\Omega\Big(\frac{2c/\pi-k}{\log(c+e)}\Big)\Big)
    && (k\le2c/\pi).
\end{align*}
Hence $\lambda_k(c)<\tau$ whenever
\[
    k\ge2c/\pi+O\big(\log(c+e)\log(1/\tau)\big),
\]
while $\lambda_k(c)>\tau$ whenever
\[
    k\le2c/\pi-O\big(\log(c+e)\big).
\]
Since $\tau<1/4$, the latter error term is also $O(\log(c+e)\log(1/\tau))$. Therefore
\[
    \Lambda(\tau,\mc G_{u,a})
    =\frac{2c}{\pi}+O\big(\log(c+e)\log(1/\tau)\big),
\]
and since $c=au/2$, the claim follows.
\end{proof}

For $a>0$, let $\mc K_a\colon L^2(0,a)\to L^2(0,a)$ be the compact truncated convolution operator with kernel $K$,
\begin{equation} \label{eq:A.5}
    (\mc K_a f)(x)=\frac12\int_0^a \operatorname{sech}\!\Big(\frac{x-y}{2}\Big)f(y)\,dy.
\end{equation}

\begin{lemma} \label{lem:A.3}
For $a>0$ and $\tau\in(0,1/4)$,
\[
    \Lambda(\tau,\mc K_a)=\frac{a}{\pi^2}\log(1/\tau)
    +O\Big(a+\log(1/\tau)\log(a\log(1/\tau)+e)\Big).
\]
\end{lemma}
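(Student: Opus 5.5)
We compare $\mc K_a$ with sinc-kernel operators $\mc G_{u,a}$ via the Fourier transform of $K$. By the computation in \cref{lem:2.4}, $\widehat K(\xi)=\pi\operatorname{sech}(\pi\xi)$. Hence for $f\in L^2(0,a)$, writing $\widehat f$ for the Fourier transform of $f\one_{(0,a)}$,
\[
    \langle\mc K_a f,f\rangle=\frac{1}{2\pi}\int_{\R}\pi\operatorname{sech}(\pi\xi)\,|\widehat f(\xi)|^2\,d\xi .
\]
The symbol $m(\xi)=\pi\operatorname{sech}(\pi\xi)$ is even and decreasing in $|\xi|$. By the layer-cake formula,
\[
    m(\xi)=\int_0^{\pi}\one_{\{|\xi|\le u(s)\}}\,ds,
\]
where $u(s)$ is defined by $m(u(s))=s$. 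Thus $\mc K_a=\int_0^\pi \mc G_{u(s),a}\,ds$ as a positive operator integral. Note that $u(s)\approx \pi^{-1}\log(2\pi/s)$.

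\textbf{Upper bound.} Fix $\tau$ and a large threshold $U=\pi^{-1}\log(1/\tau)+O(1)$ chosen so that $m(\xi)\le \tau/2$ for $|\xi|\ge U$. We split $m=m\one_{|\xi|\le U}+m\one_{|\xi|>U}$. The second piece gives a positive operator of norm at most $\tau/2$. Weyl's inequality then gives
\[
    \Lambda(\tau,\mc K_a)\le\Lambda\big(\tau/2,\mc K_a^{\le U}\big).
\]
Since $m\le\pi$, we have $\mc K_a^{\le U}\le\pi\,\mc G_{U,a}$, so by minimax
\[
    \Lambda\big(\tau/2,\mc K_a^{\le U}\big)\le\Lambda\big(\tau/(2\pi),\mc G_{U,a}\big)=\frac{aU}{\pi}+O\big(\log(aU+e)\log(1/\tau)\big)
\]
by \cref{lem:A.2}. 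Substituting $U$ gives $\frac{a}{\pi^2}\log(1/\tau)+O(a)+O(\log(1/\tau)\log(a\log(1/\tau)+e))$, which is the stated upper bound.

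\textbf{Lower bound.} Set $U'=\pi^{-1}\log(1/\tau)-C$, so that $m(\xi)\ge 2\tau$ for $|\xi|\le U'$. Then $\mc K_a\ge 2\tau\,\mc G_{U',a}$ in the form sense, and minimax gives
\[
    \Lambda(\tau,\mc K_a)\ge\Lambda\big(1/2,\mc G_{U',a}\big).
\]
By \cref{lem:A.2}, or more precisely by the eigenvalue bound $1-\lambda_k\ll\exp(-\Omega((2c/\pi-k)/\log(c+e)))$ in its proof, this is $aU'/\pi-O(\log(aU'+e))$. This equals $\frac{a}{\pi^2}\log(1/\tau)-O(a)-O(\log(a\log(1/\tau)+e))$, which is within the claimed error.

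\textbf{Main obstacle.} The main care point is justifying the form comparisons: the operators are restricted to $L^2(0,a)$, and the relation between the truncated sinc and multiplier quadratic forms is via Plancherel. We also need the monotonicity $\Lambda(\tau,A)\le\Lambda(\tau,B)$ whenever $0\le A\le B$, which is minimax, and the Weyl-type perturbation step for the high-frequency remainder. Finally, \cref{lem:A.2} is only stated for thresholds below $1/4$, so we apply it with thresholds $\tau/(2\pi)$ and $1/8$. In the second case we read off the lower count from the displayed eigenvalue bounds in that proof, and for small $aU'$ the lower bound is trivial.
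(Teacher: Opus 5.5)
Your proposal is correct and is essentially the paper's argument. The paper uses Plancherel with $\widehat K(\xi)=\pi\operatorname{sech}(\pi\xi)$ to get $\pi\operatorname{sech}(\pi u)\,\mc G_{u,a}\le\mc K_a\le\pi\,\mc G_{u,a}+\pi\operatorname{sech}(\pi u)\Id$, chooses $u$ so that $\pi\operatorname{sech}(\pi u)$ equals $\tau/2$ for the upper bound and $8\tau$ for the lower bound, and then applies min-max and \cref{lem:A.2}.

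The only difference is in the lower bound. Your choice of $2\tau$ leads to the threshold $1/2$, which lies outside the range of \cref{lem:A.2}, so you have to read the count off its proof instead. Your closing remark about using the threshold $1/8$ also does not match what you actually did. Taking $8\tau$, as the paper does, gives the threshold $1/8$ and lets you apply \cref{lem:A.2} directly.
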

\begin{proof}
If $F=f\one_{(0,a)}$, then Plancherel, together with \cref{eq:A.4} and the identity $\widehat K(\xi)=\pi\operatorname{sech}(\pi\xi)$, gives
\[
    \langle \mc K_a f,f\rangle
    =\frac{1}{2\pi}\int_{\R} \pi\operatorname{sech}(\pi\xi)|\widehat F(\xi)|^2\,d\xi,
    \qquad
    \langle \mc G_{u,a}f,f\rangle
    =\frac{1}{2\pi}\int_{-u}^u |\widehat F(\xi)|^2\,d\xi.
\]
Since
\[
    \pi\operatorname{sech}(\pi u)\one_{[-u,u]}(\xi)
    \le \pi\operatorname{sech}(\pi\xi)
    \le \pi\one_{[-u,u]}(\xi)+\pi\operatorname{sech}(\pi u),
\]
we obtain
\[
    \pi\operatorname{sech}(\pi u)\mc G_{u,a}
    \le \mc K_a
    \le \pi\mc G_{u,a}+\pi\operatorname{sech}(\pi u)\Id.
\]
Choose $u_\tau^{\pm}>0$ so that
\[
    \pi\operatorname{sech}(\pi u_\tau^+)=\tau/2,
    \qquad
    \pi\operatorname{sech}(\pi u_\tau^-)=8\tau.
\]
Then $u_\tau^{\pm}=\pi^{-1}\log(1/\tau)+O(1)$. Applying the min-max principle with $u=u_\tau^-$ and $u=u_\tau^+$, respectively, gives
\[
    \Lambda(1/8,\mc G_{u_\tau^-,a})
    \le \Lambda(\tau,\mc K_a)
    \le \Lambda(\tau/(2\pi),\mc G_{u_\tau^+,a}).
\]
Since
\[
    \frac{au_\tau^{\pm}}{\pi}=\frac{a}{\pi^2}\log(1/\tau)+O(a),
    \qquad
    \log(au_\tau^{\pm}+e)=O\big(\log(a\log(1/\tau)+e)\big),
\]
the claim follows from \cref{lem:A.2}.
\end{proof}

We can now estimate the eigenvalue-counting function of the covariance operator $T$.
\begin{proposition} \label{prop:A.4}
For all $\tau\in(0,1/4)$,
\[
    \Lambda(\tau,T)
    =\frac{1}{2\pi^2}\log^2(1/\tau)
    +O\big(\log^{3/2}(1/\tau)\sqrt{\log\log(1/\tau)}\big).
\]
\end{proposition}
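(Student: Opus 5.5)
We will estimate $\Lambda(\tau,T)$ by a Laptev-type block decomposition of the half-line into intervals on which the weight $e^{-(s+t)/2}$ is essentially constant. The truncated convolution operators $\mc K_a$ of \cref{lem:A.3} serve as local models on each block.

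\textbf{Setup and blocks.} Write $T=W\mc K W$, where $\mc K$ is convolution by $K$ on $L^2(0,\infty)$ and $W$ is multiplication by $e^{-s/2}$. Fix a block length $a\ge1$, to be optimized later, and partition $(0,\infty)$ into $J_j=[ja,(j+1)a)$ for $j\ge0$. Let $P_j$ be restriction to $J_j$.

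\textbf{Upper bound on $\Lambda(\tau,T)$.} For the upper bound we need $T\le\sum_j P_jTP_j+R$ with controlled error, but positivity of the off-diagonal parts fails in general. Instead I would use Dirichlet–Neumann style bracketing in the form
\[
\Lambda(\tau_1+\tau_2,A+B)\le\Lambda(\tau_1,A)+\Lambda(\tau_2,B)
\]
(Ky Fan / Weyl inequalities), with $A=\sum_jP_jTP_j$ and $B=T-A$.
\begin{itemize}
\item \emph{Diagonal blocks.} The weight satisfies $e^{-(s+t)/2}\le e^{-ja}$ on $J_j\times J_j$. Since the kernel $K$ is positive definite, the Schur product theorem gives $P_jTP_j\le e^{-ja}\mc K_a$ after translation. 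Hence by \cref{lem:A.3} this block contributes
\[
\Lambda(\tau e^{ja},\mc K_a)=\frac{a}{\pi^2}\bigl(\log(1/\tau)-ja\bigr)_++O\bigl(a+\log(1/\tau)\log(a\log(1/\tau)+e)\bigr).
\]
This is nonzero only for $j\le\log(1/\tau)/a$.
\item \emph{Main term.} Summing over $j$ gives $\frac{1}{2\pi^2}\log^2(1/\tau)+O(a\log(1/\tau))$ from the Riemann sum. The per-block errors, over $\asymp\log(1/\tau)/a$ blocks, give
\[
O\Bigl(\log(1/\tau)+\frac{\log^2(1/\tau)\log\log(1/\tau)}{a}\Bigr).
\]
\item \emph{Lower bound on the diagonal.} Use $e^{-(s+t)/2}\ge e^{-(j+1)a}$ similarly, combined with $T\ge\sum_jP_jTP_j-|B|$.
\end{itemize}

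\textbf{Off-diagonal remainder $B$ (main obstacle).} The kernel of $B$ is supported where $s,t$ lie in different blocks. Its size is $e^{-(s+t)/2}\operatorname{sech}((s-t)/2)$, which decays exponentially away from block boundaries. Near each boundary it is a Hankel-like piece whose singular values decay only like $e^{-c\sqrt{k}}$ or similar. I would bound $N(\tau,B)$ by splitting $B$ into boundary pieces of width $w$ around each of the $O(\log(1/\tau)/a)$ relevant boundaries. Each such piece is an operator between two intervals of length $w$ with an analytic kernel, so its singular values decay like $e^{-ck/\log w}$. This gives $O(\log(1/\tau)\log w)$ singular values above $\tau$ per boundary, plus a Hilbert–Schmidt tail of size $e^{-w/2}$ handled by taking $w\asymp\log(1/\tau)$. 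Blocks beyond $j\asymp\log(1/\tau)/a$ contribute a tail of trace $\ll\tau$ and can be absorbed.

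\textbf{Collecting errors and choosing $a$.} The off-diagonal count is then
\[
O\Bigl(\frac{\log(1/\tau)}{a}\cdot\log(1/\tau)\log\log(1/\tau)\Bigr).
\]
This has the same form as the per-block error from \cref{lem:A.3}, so the total error is
\[
O\Bigl(a\log(1/\tau)+\frac{\log^2(1/\tau)\log\log(1/\tau)}{a}\Bigr).
\]
Optimizing with $a=\sqrt{\log(1/\tau)\log\log(1/\tau)}$ yields the claimed error $O(\log^{3/2}(1/\tau)\sqrt{\log\log(1/\tau)})$.

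The thresholds $\tau_1,\tau_2$ used in the bracketing are constant multiples of $\tau$ (for instance $\tau/2$ each), which shifts $\log(1/\tau)$ only by $O(1)$ and is harmless. The delicate step is to verify that the singular values of the boundary pieces decay fast enough that their count stays within the same error budget. Failing that, I would instead smooth the block decomposition with a partition of unity $\sum\phi_j^2=1$ (IMS-type localization), which makes the commutator error explicit and of size $O(1/a^2)$ relative to the kernel.
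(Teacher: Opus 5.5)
\textbf{Verdict: same architecture as the paper, but a genuine gap at the step you yourself flag.} You use sharp Laptev blocks of length $a$, compare the diagonal blocks with $\mc K_a$ via \cref{lem:A.3}, use Ky Fan/Weyl to split off the cross-block remainder $B$, discard a trivial tail, and take $a\asymp\sqrt{\log(1/\tau)\log\log(1/\tau)}$. The error bookkeeping is right \emph{provided} each cross-boundary piece has $O(\log(1/\tau)\log\log(1/\tau))$ singular values above $\tau/m$. That is exactly the step you leave unproved, and your justification does not work. ``Analytic kernel between two intervals of length $w$, hence $s_k\lesssim e^{-ck/\log w}$'' is not a valid principle. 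The diagonal blocks $\mc K_w$ also have analytic kernels, yet by \cref{lem:A.3} they have $\asymp w\log(1/\tau)$ eigenvalues above $\tau$. In the original variables, $(e^s+e^t)^{-1}$ is analytic only in a strip $|\operatorname{Im}(s-t)|<\pi$ of fixed width. That kind of analyticity only yields approximants of rank $\asymp w\log(1/\tau)$ on an interval of length $w$. With $w\asymp\log(1/\tau)$ and $\log(1/\tau)/a$ boundaries, this is far too many, and so is your alternative guess of $e^{-c\sqrt k}$ decay.

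\textbf{The missing idea is the exponential change of variables the paper uses.} The kernel of $T$ is exactly $(e^s+e^t)^{-1}$; this is the content of $\widetilde T$ in \cref{lem:A.1}. Take $s=b-r$ in the block to the left of a boundary $b$ and $t=b+r'\ge b$, and put $u=e^{-r}$, $v=e^{-r'}$. The kernel becomes $e^{-b}v(1+uv)^{-1}$ with $uv\in[0,1]$. Since $(1+z)^{-1}$ is analytic on a disc about $[0,1]$, its degree-$k$ Taylor polynomial about $z=1/2$ gives a rank-$(k+1)$ approximant with error $\ll 3^{-k}$. Hence $s_{k+2}\ll\sqrt a\,e^{-b}3^{-k}$, uniformly in the length of the right-hand interval. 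So each ``block versus everything to its right'' piece contributes only $O(\log(1/\tau))$ singular values above $\tau/(8m)$. No truncation width $w$ and no Hilbert--Schmidt tail are needed.

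\textbf{Your IMS fallback does not get around this.} For an integral operator, the localization error is the Schur product $\tfrac12T(s,t)\sum_j(\phi_j(s)-\phi_j(t))^2$. This still equals $T(s,t)$ whenever $\phi_j(s)\phi_j(t)=0$ for all $j$, so the long-range interaction remains in full. Moreover, a kernel bound of relative size $a^{-2}$ does not make a singular-value count small, since it only shifts $\log(1/\tau)$ by $O(\log a)$.

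\textbf{A smaller, fixable error in the diagonal comparison.} The inequality $P_jTP_j\le e^{-ja}\mc K_a$ is false as an operator inequality, and it does not follow from Schur's theorem. The kernel $e^{-ja}-e^{-(s+t)/2}$ is not positive semidefinite: its $2\times2$ minors have determinant $-e^{-ja}(e^{-s/2}-e^{-t/2})^2<0$. You only need the eigenvalue comparison, which is true:
\begin{itemize}
\item Write $P_jTP_j\cong W\mc K_aW$.
\item Its nonzero eigenvalues are those of $\mc K_a^{1/2}W^2\mc K_a^{1/2}$, which lies between $e^{-(j+1)a}\mc K_a$ and $e^{-ja}\mc K_a$.
\item Min-max then gives $\Lambda(\tau e^{(j+1)a},\mc K_a)\le\Lambda(\tau,P_jTP_j)\le\Lambda(\tau e^{ja},\mc K_a)$.
\end{itemize}
This is how the paper argues.
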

\begin{proof}
The proof follows Laptev's argument \cite{Laptev74}. The quantitative input comes from \cref{lem:A.3}. By \cref{lem:A.1}, $T$ is unitarily equivalent to $\widetilde T$, so it suffices to compute $\Lambda(\tau,\widetilde T)$.

Fix integers $a,m\ge1$. Let $P_j\colon L^2(0,\infty)\to L^2((j-1)a,ja)$ be the orthogonal projection onto $[(j-1)a,ja)$, and let $Q_j\colon L^2(0,\infty)\to L^2(ja,\infty)$ be the orthogonal projection onto $[ja,\infty)$. Then
\[
    \widetilde T
    = \sum_{j=1}^m P_j\widetilde T P_j
      + \sum_{j=1}^m (Q_j\widetilde T P_j + P_j\widetilde T Q_j)
      + Q_m\widetilde T Q_m.
\]
The off-diagonal terms are not self-adjoint, so we estimate them via the singular-value counting function $N(\tau,\cdot)$. The Ky Fan inequality gives
\begin{align}
    \Lambda\big(\tau,\widetilde T\big)
    &\le \Lambda\Big(\frac{\tau}{2},\sum_{j=1}^m P_j\widetilde T P_j\Big)
      + \Lambda\Big(\frac{\tau}{4},Q_m\widetilde TQ_m\Big)
      + \sum_{j=1}^m N\Big(\frac{\tau}{8m},P_j\widetilde TQ_j\Big)
      + \sum_{j=1}^m N\Big(\frac{\tau}{8m},Q_j\widetilde TP_j\Big), \label{eq:A.6}\\
    \Lambda\Big(\frac{\tau}{2},\widetilde T\Big)
    &\ge \Lambda\Big(\tau,\sum_{j=1}^m P_j\widetilde T P_j\Big)
      - \Lambda\Big(\frac{\tau}{4},Q_m\widetilde TQ_m\Big)
      - \sum_{j=1}^m N\Big(\frac{\tau}{8m},P_j\widetilde TQ_j\Big)
      - \sum_{j=1}^m N\Big(\frac{\tau}{8m},Q_j\widetilde TP_j\Big). \label{eq:A.7}
\end{align}

We begin with the tail block $Q_m\widetilde TQ_m$. If $U_m\colon L^2(0,\infty)\to L^2(ma,\infty)$ is the translation unitary $(U_mf)(s)=f(s-ma)$, then
\[
    U_m^{-1}Q_m\widetilde TQ_mU_m = e^{-2ma}\widetilde T.
\]
Hence
\[
    \|Q_m\widetilde TQ_m\| = e^{-2ma}\|\widetilde T\| = e^{-2ma}\|T\| \le e^{-2ma}\Tr(T)=\frac12 e^{-2ma},
\]
where we used positivity of $T$. Choosing $m$ so that $e^{-2ma}<\tau/2$, all eigenvalues of $Q_m\widetilde TQ_m$ lie below $\tau/4$, and therefore
\[
    \Lambda\Big(\frac{\tau}{4},Q_m\widetilde TQ_m\Big)=0.
\]

Since $Q_j\widetilde T P_j=(P_j\widetilde TQ_j)^*$, the two off-diagonal terms have the same singular values. It therefore suffices to estimate $N(\tau/(8m),P_j\widetilde TQ_j)$. Let $U_j\colon L^2(ja,\infty)\to L^2(0,1)$ be the unitary map
\[
    (U_jf)(v)=(2v)^{-1/2}f\Big(ja-\frac12\log v\Big).
\]
Writing $x=ja-r$ with $0<r<a$, setting $u=e^{-2r}$, and changing variables $v=e^{-2(t-ja)}$, we obtain
\[
    (P_j\widetilde TQ_jU_j^{-1}g)(ja-r)
    = e^{-2ja}\int_0^1 \frac{\sqrt{2v}}{1+uv}g(v)\,dv.
\]
Since $uv\in[0,1]$, the function $z\mapsto(1+z)^{-1}$ is analytic on the disk $|z-1/2|<3/2$. Truncating its Taylor series about $z=1/2$ after $k$ terms yields a polynomial $p_k$ with
\[
    \sup_{0\le z\le1}|(1+z)^{-1}-p_k(z)|\ll3^{-k}.
\]
If $p_k(z)=\sum_{n=0}^k a_n z^n$, then the approximating kernel is
\[
    e^{-2ja}\sqrt{2v}\,p_k(uv)
    = e^{-2ja}\sum_{n=0}^k a_n u^n(\sqrt2\,v^{n+1/2}),
\]
so the approximating operator has rank at most $k+1$. The remainder kernel is uniformly $O(e^{-2ja}3^{-k})$ on a set of measure $a$, and therefore its Hilbert--Schmidt norm is $O(\sqrt a\,e^{-2ja}3^{-k})$. Hence
\[
    s_{k+2}(P_j\widetilde TQ_j)\ll \sqrt a\,e^{-2ja}3^{-k},
\]
and choosing
\[
    k\asymp \log_+\Big(\frac{8m\sqrt a\,e^{-2ja}}{\tau}\Big)
\]
gives
\begin{equation} \label{eq:A.8}
    N\Big(\frac{\tau}{8m},P_j\widetilde TQ_j\Big)
    \ll \log\Big(e+\frac{8m\sqrt a\,e^{-2ja}}{\tau}\Big).
\end{equation}

Finally, since the ranges of the $P_j$ are orthogonal,
\[
    \Lambda\Big(\tau,\sum_{j=1}^m P_j\widetilde T P_j\Big)
    = \sum_{j=1}^m \Lambda(\tau,P_j\widetilde T P_j).
\]
After translating $[(j-1)a,ja)$ to $[0,a)$ and dilating to $[0,2a)$, the block $P_j\widetilde T P_j$ is unitarily equivalent to
\[
    e^{-2(j-1)a}M\mc K_{2a}M,
    \qquad
    (Mf)(x)=e^{-x/2}f(x).
\]
Since the nonzero eigenvalues of $M\mc K_{2a}M$ and $\mc K_{2a}^{1/2}M^2\mc K_{2a}^{1/2}$ coincide, while $e^{-2a}\Id\le M^2\le\Id$, the min-max principle gives
\[
    \Lambda(\tau e^{2ja},\mc K_{2a})
    \le \Lambda(\tau,P_j\widetilde T P_j)
    \le \Lambda(\tau e^{2(j-1)a},\mc K_{2a}).
\]
Choose
\[
    m=\Big\lceil \frac{\log(2/\tau)}{2a}\Big\rceil+1.
\]
Note that $\Tr(\mc K_{2a})=a$, so only $O(1+1/a)$ of the parameters $\eta_j\in\{\tau e^{2ja},\tau e^{2(j-1)a}\}$ can exceed $1/4$, and for each such $j$ we have
\[
    \Lambda(\eta_j,\mc K_{2a})\le \eta_j^{-1}\Tr(\mc K_{2a})\le4a.
\]
Thus their total contribution is $O(a)$. Applying \cref{lem:A.3} to the remaining terms and summing the arithmetic progression
\[
    \log\big(1/(\tau e^{2ja})\big)=\log(1/\tau)-2ja
\]
yields
\begin{equation} \label{eq:A.9}
    \Lambda\Big(\tau,\sum_{j=1}^m P_j\widetilde T P_j\Big)
    = \frac{1}{2\pi^2}\log^2(1/\tau)
    + O\Big(a\log(1/\tau)+\frac{\log^2(1/\tau)}{a}\log(a\log(1/\tau)+e)\Big).
\end{equation}
Summing \cref{eq:A.8} over $1\le j\le m$ and using $m\asymp \log(1/\tau)/a$, we obtain
\[
    \sum_{j=1}^m N\Big(\frac{\tau}{8m},P_j\widetilde TQ_j\Big)
    \ll \frac{\log^2(1/\tau)}{a}\log(a\log(1/\tau)+e).
\]
Combining this with \cref{eq:A.9} and \cref{eq:A.6}, together with the fact that $\Lambda(\tau/4,Q_m\widetilde TQ_m)=0$, gives the upper bound
\[
    \Lambda(\tau,\widetilde T)
    \le \frac{1}{2\pi^2}\log^2(1/\tau)
    + O\Big(a\log(1/\tau)+\frac{\log^2(1/\tau)}{a}\log(a\log(1/\tau)+e)\Big).
\]
For the matching lower bound, apply \cref{eq:A.7} with $2\tau$ in place of $\tau$. Since replacing $\tau$ by $2\tau$ changes the main term by only $O(\log(1/\tau))$, which is absorbed by the displayed error term, we obtain
\[
    \Lambda(\tau,\widetilde T)
    \ge \frac{1}{2\pi^2}\log^2(1/\tau)
    - O\Big(a\log(1/\tau)+\frac{\log^2(1/\tau)}{a}\log(a\log(1/\tau)+e)\Big).
\]
Therefore
\[
    \Lambda(\tau,\widetilde T)
    = \frac{1}{2\pi^2}\log^2(1/\tau)
    + O\Big(a\log(1/\tau)+\frac{\log^2(1/\tau)}{a}\log(a\log(1/\tau)+e)\Big).
\]
Taking
\[
    a=\Big\lceil \sqrt{\log(1/\tau)\log\log(1/\tau)}\Big\rceil
\]
proves the claim.
\end{proof}

\subsection{Turning the counting bound into \texorpdfstring{\cref{lem:2.6}}{Lemma 2.6}}\label{subsec:A.2}
The counting estimate in \cref{prop:A.4} already yields the correct cubic scale for the $L^2$ small-ball probability. We record the resulting bound explicitly.

\begin{proposition} \label{prop:A.5}
Let $X$ be as in \cref{eq:2.2}. Then for all $\delta\in(0,1/4)$,
\[
    \log \PP\Big(\int_0^{\infty} e^{-t}X_t^2\,dt<\delta\Big)
    = -\frac{1}{12\pi^2}\log^3(1/\delta)
    + O\big(\log^{5/2}(1/\delta)\sqrt{\log\log(1/\delta)}\big).
\]
\end{proposition}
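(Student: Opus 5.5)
The plan is to convert the eigenvalue count of \cref{prop:A.4} into the small-ball asymptotic using the Karhunen--Lo\`eve representation \cref{eq:A.2}, $I=\sum_k\lambda_k\xi_k^2$. Write $L=\log(1/\delta)$.

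\emph{Counting function in terms of $\log$ of eigenvalues.} Inverting \cref{prop:A.4} gives, for $k$ large,
\[
\log(1/\lambda_k)=\pi\sqrt{2k}+O\big(k^{1/4}\sqrt{\log k}\big).
\]

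\emph{Upper bound on the probability.} Apply the exponential Chebyshev inequality: for any $\theta>0$,
\[
\PP(I<\delta)\le e^{\theta\delta}\,\E e^{-\theta I}=\exp\Big(\theta\delta-\frac12\sum_k\log(1+2\theta\lambda_k)\Big).
\]
Take $\theta=\delta^{-1}$. Then $e^{\theta\delta}=e$, and
\[
\sum_k\log(1+2\lambda_k/\delta)\ge\sum_{\lambda_k\ge\delta}\log(\lambda_k/\delta).
\]
By the layer-cake formula this last sum equals $\int_\delta^{1/2}\Lambda(\tau,T)\,\frac{d\tau}{\tau}$. Substituting \cref{prop:A.4} and setting $u=\log(1/\tau)$, it becomes
\[
\int_{\log 2}^{L}\Big(\frac{u^2}{2\pi^2}+O\big(u^{3/2}\sqrt{\log u}\big)\Big)\,du=\frac{L^3}{6\pi^2}+O\big(L^{5/2}\sqrt{\log L}\big).
\]
Halving this gives exactly the claimed exponent $-\frac{1}{12\pi^2}L^3$.

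\emph{Lower bound on the probability.} Split the series at the index set $\{k:\lambda_k\ge\delta/L^{C}\}$. For the tail $\sum_{\lambda_k<\delta/L^C}\lambda_k\xi_k^2$, the expectation is $\sum_{\lambda_k<\delta L^{-C}}\lambda_k$. By the layer-cake formula this is $\int_0^{\delta L^{-C}}\Lambda(\tau,T)\,d\tau\ll\delta L^{-C}\log^2(L^C/\delta)\le\delta/4$ for $C$ large. Markov's inequality then shows the tail is below $\delta/2$ with probability at least $1/2$.

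For the head, the number of indices is $n=\Lambda(\delta L^{-C},T)=\frac{L^2}{2\pi^2}+O(L^{3/2}\sqrt{\log L})$. Require each $\lambda_k\xi_k^2\le\delta/(2n)$, that is, $|\xi_k|\le\sqrt{\delta/(2n\lambda_k)}$.
\begin{itemize}
\item When $\lambda_k\ge\delta/(2n)$, the probability of this is at least $c\sqrt{\delta/(2n\lambda_k)}$.
\item When $\lambda_k<\delta/(2n)$, the probability is at least an absolute constant $c'$.
\end{itemize}
The head and tail events involve independent coordinates, so their probabilities multiply. The log of the head probability is therefore at least
\[
-\frac12\sum_{\lambda_k\ge\delta/(2n)}\log\frac{2n\lambda_k}{\delta}-O(n).
\]
The $\log(2n)$ shift changes the layer-cake integral by only $O(L^2\log L)$, and the $O(n)$ term is $O(L^2)$. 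Both are absorbed in the error, and the main term again equals $-\frac{1}{12\pi^2}L^3$. Independence of the $\xi_k$ is what lets the head and tail probabilities simply multiply, so no correlation inequality is needed here.

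\emph{Main obstacle.} The only delicate point is bookkeeping the error terms. The upper bound uses the bound $\Lambda(\tau,T)\ge\frac{1}{2\pi^2}\log^2(1/\tau)-\cdots$ over $\tau\in[\delta,1/2]$, and the lower bound uses the bound $\Lambda\le\cdots$. I must check that integrating the $O(u^{3/2}\sqrt{\log u})$ error against $du$ gives exactly $O(L^{5/2}\sqrt{\log L})$, and that every polylogarithmic rescaling of $\delta$ costs only $O(L^2\log L)$. For $\tau\ge1/4$ the count $\Lambda$ is bounded by $\Tr(T)/\tau\le 2$, so that range contributes $O(1)$.
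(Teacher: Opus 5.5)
Your argument is correct, but it converts the counting estimate of \cref{prop:A.4} into probabilities differently from the paper. The paper first proves one two-sided asymptotic for the log-Laplace transform,
$-L(r)=r\int_0^{\lambda_1}\Lambda(\tau,T)(1+2r\tau)^{-1}\,d\tau=\frac{1}{12\pi^2}\log^3 r+O(\log^{5/2}r\sqrt{\log\log r})$,
and then reads off both bounds from it:
\begin{itemize}
\item the upper bound by exponential Chebyshev at $r=\delta^{-1}\log^2(1/\delta)$;
\item the lower bound from $\E e^{-rI}\le\PP(I<\delta)+e^{-r\delta}$ at $r=\delta^{-1}\log^4(1/\delta)$, where $r\delta$ dominates $-L(r)$.
\end{itemize}

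Your upper bound uses the same Chebyshev idea, but at $\theta=1/\delta$ and with the pointwise bound $\log(1+2x)\ge\max(\log x,0)$. That way you only need the lower half of \cref{prop:A.4} on $[\delta,1/4)$, and you never have to control eigenvalues below $1/\theta$.

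Your lower bound replaces the Tauberian step with an explicit event built from the independence of the Karhunen--Lo\`eve coefficients: a Markov bound on the tail $\{\lambda_k<\delta L^{-C}\}$, times a coordinatewise small-ball product over the head. This is more elementary and shows the mechanism directly: each of the $\asymp L^2$ large modes must be individually suppressed. The cost is extra bookkeeping for $n$, the $\log(2n)$ shift and the $O(n)$ loss, all of which you correctly show are $O(L^2\log L)$. The paper's route is more compact because a single asymptotic for $L(r)$ serves both directions.

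Your opening inversion $\log(1/\lambda_k)=\pi\sqrt{2k}+\cdots$ is never used and can be dropped.
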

\begin{proof}
By \cref{eq:A.2}, it is enough to estimate
\[
    \log\PP(I<\delta),
    \qquad
    I=\sum_{k\ge1} \lambda_k\xi_k^2.
\]
Write
\[
    L(r)=\log\E e^{-rI}=-\frac12\sum_{k\ge1}\log(1+2r\lambda_k).
\]
It suffices to consider sufficiently small $\delta$. Since
\[
    \Lambda(\tau,T)=\sum_{k\ge1}\one_{\{\tau\le\lambda_k\}},
\]
exchanging summation and integration gives
\[
    -L(r)=r\int_0^{\lambda_1} \frac{\Lambda(\tau,T)}{1+2r\tau}\,d\tau.
\]
Also $\lambda_1\le\Tr(T)=1/2$, and $\tau\Lambda(\tau,T)\le\Tr(T)$, so $\Lambda(\tau,T)\le(2\tau)^{-1}\le2$ on $[1/4,\lambda_1]$. Therefore \cref{prop:A.4} and the change of variables $u=2r\tau$ imply
\[
    -L(r)=\frac{1}{12\pi^2}\log^3 r+O\big(\log^{5/2}r\sqrt{\log\log r}\big).
\]
For the upper bound, take
\[
    r_\delta=\delta^{-1}\log^2(1/\delta).
\]
Then $r_\delta\delta=\log^2(1/\delta)$ and $\log r_\delta=\log(1/\delta)+O(\log\log(1/\delta))$, so Markov's inequality yields
\[
    \PP(I<\delta)\le e^{L(r_\delta)+r_\delta\delta}
    = \exp\Big(-\frac{1}{12\pi^2}\log^3(1/\delta)
    + O\big(\log^{5/2}(1/\delta)\sqrt{\log\log(1/\delta)}\big)\Big).
\]
For the lower bound, take instead
\[
    r_\delta=\delta^{-1}\log^4(1/\delta).
\]
Then again $\log r_\delta=\log(1/\delta)+O(\log\log(1/\delta))$, and hence
\[
    -L(r_\delta)=\frac{1}{12\pi^2}\log^3(1/\delta)
    + O\big(\log^{5/2}(1/\delta)\sqrt{\log\log(1/\delta)}\big).
\]
Also $r_\delta\delta=\log^4(1/\delta)\gg -L(r_\delta)$, so
\[
    e^{-r_\delta\delta}\le\frac12 e^{L(r_\delta)}
\]
for all sufficiently small $\delta$. Since
\[
    e^{L(r_\delta)}=\E e^{-r_\delta I}\le \PP(I<\delta)+e^{-r_\delta\delta},
\]
this gives
\[
    \PP(I<\delta)\ge\frac12 e^{L(r_\delta)}.
\]
Taking logarithms yields
\[
    \log\PP(I<\delta)
    \ge L(r_\delta)+O(1)
    = -\frac{1}{12\pi^2}\log^3(1/\delta)
    + O\big(\log^{5/2}(1/\delta)\sqrt{\log\log(1/\delta)}\big).
\]
Combining the upper and lower bounds proves the proposition.
\end{proof}

\begin{proof}[Proof of \cref{lem:2.6}]
This is exactly \cref{prop:A.5}.
\end{proof}
 
\bibliographystyle{amsplain0}
\bibliography{main}

\end{document}